\newcommand{\groundmodel}{\mathcal{N}}
\newcommand{\refl}{\mathsf{Ref}}
\newcommand{\bew}[2]{\mathrm{Pr}_{ #1 }(\ulcorner #2 \urcorner)}
\newcommand{\sigmaoneisdc}{\mathrm{Strong} \; \Sigma^1_i \text{-}\mathsf{DC}_0}
\newcommand{\tuple}[1]{{\langle #1 \rangle}}
\theoremstyle{plain}
\newtheorem{theorem}{Theorem}
\newtheorem{lemma}[theorem]{Lemma}
\newtheorem{corollary}[theorem]{Corollary}
\newtheorem{proposition}[theorem]{Proposition}
\newtheorem*{question}{Question}
\theoremstyle{definition}
\newtheorem{definition}[theorem]{Definition}
\title{Determinacy and reflection principles in second-order arithmetic}
\author{Leonardo Pacheco$^1$, Keita Yokoyama$^2$\\
        $^{1,2}$Mathematical Institute, Tohoku University, Sendai, Japan\\
        $^1$\texttt{leonardovpacheco@gmail.com}, $^2$\texttt{keita.yokoyama.c2@tohoku.ac.jp}}
\date{}
\begin{document}
\maketitle

\begin{abstract}
    It is known that several variations of the axiom of determinacy play important roles in the study of reverse mathematics, and the relation between the hierarchy of determinacy and comprehension are revealed by Tanaka, Nemoto, Montalb{\'a}n, Shore, and others.
    We prove variations of a result by Ko{\l }odziejczyk and Michalewski relating determinacy of arbitrary boolean combinations of $\Sigma^0_2$ sets and reflection in second-order arithmetic.
    Specifically, we prove that:
      over $\mathsf{ACA}_0$,
        $\Pi^1_2$-$\refl(\mathsf{ACA}_0)$ is equivalent to $\forall n.(\Sigma^0_1)_n$-$\mathsf{Det}^*$;
        $\Pi^1_3$-$\refl(\Pi^1_1$-$\mathsf{CA}_0)$ is equivalent to $\forall n.(\Sigma^0_1)_n$-$\mathsf{Det}$; and
        $\Pi^1_3$-$\refl(\Pi^1_2$-$\mathsf{CA}_0)$ is equivalent to $\forall n.(\Sigma^0_2)_n$-$\mathsf{Det}$.
    We also restate results by Montalb{\'a}n and Shore to show that $\Pi^1_3$-$\refl(\mathsf{Z}_2)$ is equivalent to $\forall n.(\Sigma^0_3)_n$-$\mathsf{Det}$ over $\mathsf{ACA}_0$.

    \smallskip
    \noindent\textbf{Keywords.} reverse mathematics, determinacy, reflection principles, difference hierarchy
\end{abstract}

In this paper we characterize several determinacy axioms by reflection principles in second-order arithmetic.
Consider axioms stating the determinacy of Gale--Stewart games whose payoff sets are finite (possibly nonstandard) differences of $\Sigma^0_1$, $\Sigma^0_2$ and $\Sigma^0_3$ sets in Baire or Cantor space.
We characterize these axioms by reflection principles $\Pi^1_n$-$\refl(T)$ stating that ``every $\Pi^1_n$-formula provable in $T$ is true''.
We show that
\begin{theorem}\label{thm::main_theorem}
    For $k\in\mathbb{N}$, let $(\Sigma^0_n)_k$ be the $k^\text{th}$ level of the difference hierarchy of $\Sigma^0_n$.
    Denote by $(\Sigma^0_n)_k$-$\mathsf{Det}$ the formula stating the determinacy of subsets of the Baire space in $(\Sigma^0_n)_k$, and by $(\Sigma^0_n)_k$-$\mathsf{Det}^*$ the formula stating the determinacy of subsets of the Cantor space in $(\Sigma^0_n)_k$.
    Over $\mathsf{ACA}_0$:
    \begin{enumerate}
        \item $\Pi^1_2$-$\refl(\mathsf{ACA}_0)$ is equivalent to $\forall n.(\Sigma^0_1)_n$-$\mathsf{Det}^*$;
        \item $\Pi^1_3$-$\refl(\Pi^1_1$-$\mathsf{CA}_0)$ is equivalent to $\forall n.(\Sigma^0_1)_n$-$\mathsf{Det}$;
        \item $\Pi^1_3$-$\refl(\Pi^1_2$-$\mathsf{CA}_0)$ is equivalent to $\forall n.(\Sigma^0_2)_n$-$\mathsf{Det}$; and
        \item $\Pi^1_3$-$\refl(\mathsf{Z}_2)$ is equivalent to $\forall n.(\Sigma^0_3)_n$-$\mathsf{Det}$.
    \end{enumerate}
\end{theorem}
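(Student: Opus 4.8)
The plan is to prove each of the four equivalences by splitting into the two implications ``determinacy $\to$ reflection'' and ``reflection $\to$ determinacy,'' running a single argument schema and reading off the correct parameters in each case. Throughout, write $T$ for the relevant base theory ($\mathsf{ACA}_0$, $\Pi^1_1$-$\mathsf{CA}_0$, $\Pi^1_2$-$\mathsf{CA}_0$, $\mathsf{Z}_2$), $\ell$ for the base pointclass, and $k$ for the reflection level ($k=2$ for part~1, $k=3$ for parts~2--4). The first step is a complexity computation: for each fixed $n$, the sentence $(\Sigma^0_\ell)_n$-$\mathsf{Det}$ is, uniformly in $n$, a $\Pi^1_k$-sentence. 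For the Baire-space statements (parts~2--4) the winning condition ``$f\in A$'' is arithmetic in the code of the payoff, so ``one of the players has a winning strategy'' is $\Sigma^1_2$ and prefixing the universal quantifier over payoff codes yields $\Pi^1_3$. For the Cantor-space statement (part~1) the finite branching lets one replace the play quantifier by a number quantifier via weak K\"onig's lemma, dropping the existence of a strategy to $\Sigma^1_1$ and the whole statement to $\Pi^1_2$; this is exactly the role of the subscript $0$ and the star in $(\Sigma^0_1)_n$-$\mathsf{Det}^*_0$.

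For the direction reflection $\to$ determinacy, I would first establish \emph{uniform provability}: there is a primitive recursive function $n\mapsto p_n$ such that $\mathsf{ACA}_0$ proves that each $p_n$ is a $T$-proof of $(\Sigma^0_\ell)_n$-$\mathsf{Det}$. The existence of each individual proof is the level-by-level determinacy analysis --- Tanaka and MedSalem--Tanaka for the $\Sigma^0_1,\Sigma^0_2$ levels, Montalb\'an--Shore for the $\Sigma^0_3$ level --- and the point is that these inductive strategy constructions effectivize in $n$, so the entire sequence of proofs is given by a provably total recursive function. Granting this, I would invoke $\Pi^1_k$-$\refl(T)$ together with the $\mathsf{ACA}_0$-provable correctness of the partial $\Pi^1_k$-truth predicate: since $(\Sigma^0_\ell)_n$-$\mathsf{Det}$ is $\Pi^1_k$ and $T$-provable for every $n$, reflection yields its truth for every $n$, i.e.\ $\forall n.(\Sigma^0_\ell)_n$-$\mathsf{Det}$. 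The essential subtlety here is that $(\Sigma^0_\ell)_n$ for nonstandard $n$ escapes the next pointclass, so the uniform statement genuinely transcends what $T$ proves, and reflection is what bridges the standard instances to all $n$.

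For the direction determinacy $\to$ reflection, I would use the determinacy hypothesis to manufacture correct models of $T$. The key lemma is that $\forall n.(\Sigma^0_\ell)_n$-$\mathsf{Det}$ produces, above any given set $X$, a countable coded $\omega$-model, indeed a $\beta_{k-1}$-model, of $T$ containing $X$; the finite levels of the difference hierarchy encode precisely the iterated comprehension and transfinite-recursion strength needed to close off under the set-existence axioms of $T$ and to guarantee $\Sigma^1_{k-1}$-absoluteness. Reflection then follows by the standard argument: given a $\Pi^1_k$-sentence $\psi=\forall X\,\theta(X)$ with $\theta\in\Sigma^1_{k-1}$ and $T\vdash\psi$, fix $X$, build such a model $M\ni X$, observe $M\models\psi$ hence $M\models\theta(X)$, and transfer $\theta(X)$ upward by absoluteness; as $X$ was arbitrary, $\psi$ is true.

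The main obstacle will be the two core lemmas, one on each side. On the reflection $\to$ determinacy side, the delicate point is verifying that the determinacy proofs are genuinely uniform in $n$ --- that the strategy constructions and the comprehension instances they invoke can be bounded and assembled by a single recursion visible to $\mathsf{ACA}_0$. On the determinacy $\to$ reflection side, the difficulty is the model construction together with the exact absoluteness bookkeeping that pins the level at $\Pi^1_2$ (part~1) versus $\Pi^1_3$ (parts~2--4) and that separates the Cantor-space from the Baire-space case; I expect part~1 to demand separate care, since the compactness phenomena that lower the complexity also change which models the games are able to build. For part~4 I would import the upper and lower bounds of Montalb\'an and Shore directly and only repackage them into the reflection equivalence, which is why the excerpt frames that case as a restatement rather than a fresh argument.
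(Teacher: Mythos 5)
Your outline of the direction ``reflection $\to$ determinacy'' matches the paper: formalize the level-by-level determinacy proofs so that $\mathsf{ACA}_0$ proves $\forall n.\mathrm{Pr}_T(\ulcorner(\Sigma^0_\ell)_n\text{-}\mathsf{Det}\urcorner)$ by $\Sigma^0_1$-induction on proofs, then apply reflection to each (possibly nonstandard) instance. Part~4 is also handled the same way you describe, by repackaging the Montalb\'an--Shore bounds.

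The gap is in your key lemma for the direction ``determinacy $\to$ reflection.'' You claim that $\forall n.(\Sigma^0_\ell)_n$-$\mathsf{Det}$ produces, above any $X$, a single $\beta_{k-1}$-model of $T$ containing $X$, and then run the standard absoluteness argument. This lemma is false at every level below part~4. For part~1, a $\beta$-model of $\mathsf{ACA}_0$ containing an arbitrary $X$ already requires $\Pi^1_1$-$\mathsf{CA}_0$, whereas $\forall n.(\Sigma^0_1)_n$-$\mathsf{Det}^*$ is only $\mathsf{ACA}_0'$. For part~2, a $\beta_2$-model of $\Pi^1_1$-$\mathsf{CA}_0$ containing $X$ requires $\mathrm{Strong}\;\Sigma^1_2$-$\mathsf{DC}_0$ (essentially $\Pi^1_2$-$\mathsf{CA}_0$), whereas $\forall n.(\Sigma^0_1)_n$-$\mathsf{Det}$ is only $\Pi^1_1$-$\mathsf{CA}_0'$, i.e.\ iterated hyperjumps; similarly for part~3. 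What the determinacy hypotheses actually yield is much weaker: arbitrarily long \emph{finite} chains $X\in Y_0\in\cdots\in Y_n$ with $Y_0\subseteq_{\beta_i}\cdots\subseteq_{\beta_i}Y_n\subseteq_{\beta_e}\groundmodel$ (the formulas $\psi_e(i,n)$ of the paper; for part~3 even this requires the detour through $\forall n.[\Sigma^1_1]^n$-$\mathsf{ID}$ and an explicit multiple-inductive-definition construction of the chains). The missing idea is how to get from these finite chains to a sufficiently correct model of $T$: the paper does this by a compactness argument, writing down a first-order theory describing an $\omega$-indexed chain of coded models, each a $\beta_i$-submodel of the next and ``$\beta_e$-correct enough'' about a fixed counterexample $X_0$, verifying finite satisfiability from $\forall n.\psi_e(i,n)$, and observing that the union $\mathcal{M}_\omega$ of the resulting chain is closed under taking $\beta_i$-models, hence models $\mathrm{Strong}\;\Sigma^1_i$-$\mathsf{DC}_0\supseteq T$, while the correctness clauses force $\mathcal{M}_\omega\models\neg\theta(X_0)$ --- a contradiction with $T\vdash\forall X.\theta(X)$. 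Without this compactness step (or some substitute for it), your absoluteness argument has no model to run in, and the plan does not go through.
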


One of the first results of reverse mathematics was the equivalence of $\Sigma^0_1$-$\mathsf{Det}$ and $\mathsf{ATR}$, proved by Steel \cite{steel1977phdthesis}.
Tanaka \cite{tanaka1990weak} proved that $(\Sigma^0_1)_n$-$\mathsf{Det}$ is equivalent to $\Pi^1_1$-$\mathsf{CA}_0$ over $\mathsf{ACA}_0$, for all $2\leq n<\omega$; Tanaka also proved the equivalence between $\Sigma^0_1\text{-}\mathsf{Det}$ and $\mathsf{ATR}_0$ without using transfinite $\Sigma^1_1$-induction.
Nemoto \emph{et al.} \cite{nemoto2007infinite} proved that $(\Sigma^0_1)_n$-$\mathsf{Det}^*$ is equivalent to $\mathsf{ACA}_0$ over $\mathsf{RCA}_0$, for all $2\leq n<\omega$.
MedSalem and Tanaka studied the determinacy of differences of $\Sigma^0_2$ sets in \cites{medsalem2007delta03,medsalem2008weak}.
Montalb{\' a}n and Shore \cite{montalban2012limits} proved that $\mathsf{Z}_2$ proves $(\Sigma^0_3)_n$-$\mathsf{Det}$ for all $1\leq n<\omega$, and that the converse doesn't hold.
They also showed in \cite{montalban2014limits} that $(\Sigma^0_3)_n$-$\mathsf{Det}$ proves the existence of $\beta$-models of $\Delta^1_{n+1}$-$\mathsf{CA}_0$ for $1\leq n< \omega$.
Ko{\l}odziejczyk and Michalewski \cite{kolodziejczyk2016unprovable} proved item (3) of the theorem above with $\Pi^1_2$-$\mathsf{CA}_0$ as the base system.
Their proof uses a result from Heinatsch and M{\"o}llerfeld \cite{heinatsch2010determinacy} characterizing the $\mu$-calculus in second-order arithmetic via proof-theoretic methods;
while our proof uses work of MedSalem and Tanaka \cite{medsalem2008weak} on the reverse mathematics of inductive definitions.
See \cite{yoshii2017survey} for further results on the reverse mathematics of determinacy.

In this paragraph and the next, we briefly survey existing results on reflection and second-order arithmetic.
Our reflection principles $\Pi^1_m$-$\refl(\Gamma)$ are equivalent to uniform reflection schemes $\Pi^1_n$-$\mathsf{REF}(\Gamma)$.
These schemes have been thoroughly studied in the setting of first-order arithmetic \cites{kreisel1968reflection,leivant1983optimality,beklemishev2005reflection}.
The characterization of reflection schemes was extended to second-order arithmetic by Frittaion \cite{frittaion2022uniformreflection}.
He showed that if $T_0$ is a theory extending $\mathsf{ACA}_0$ and axiomatizable by a $\Pi^1_{k+2}$ sentence, then $T_0 + \Pi^1_{n+2}\refl(T) = T_0 + \Pi^1_n\text{-}\mathsf{TI}(\varepsilon_0)$; where $T$ is $T_0$ plus full induction and $\Pi^1_n\text{-}\mathsf{TI}(\varepsilon_0)$ is transfinite induction up to $\varepsilon_0$ for $\Pi^1_n$-formulas.

Other forms of reflection have also been studied in the literature.
Pakhomov and Walsh \cites{pakhomov2018reflection,pakhomov2021reducing,pakhomov2021infinitary} studied iterated reflection principles and their proof theoretic ordinals.
Pakhomov and Walsh \cite{pakhomov2021reducing} also related $\omega$-model reflection and iterated syntactic reflection.
Arai \cite{arai1998someresults} characterized $\Pi^1_n$-transfinite induction as a reflection principle for $\omega$-logic; Cord{\'o}n-Franco \cite{cordon2017predicativity} \emph{et al.} did the same for $\mathsf{ATR}_0$,  and Fern{\' a}ndez-Duque \cite{fernandez2022omegalogic} for $\Pi^1_1$-$\mathsf{CA}_0$.
For more results on reflection see Simpson's book \cite{simpson2009sosoa} or the first author's survey \cite{pacheco2021rims}.

In section \ref{section::preliminaries}, we define the systems used in the paper, review existing results on determinacy, and prove that our reflection principles are equivalent to uniform reflection schemes.
We also prove some folklore results on determinacy.
In section \ref{section::sequences_of_beta_models} we prove the main lemma of the paper: the reflection principle $\Pi^1_{e+2}$-$\refl(\sigmaoneisdc)$ is equivalent to the existence of arbitrarily long chains of coded $\beta_i$-models which are coded $\beta_e$-submodels of the ground model.
We also use this lemma to prove (1) and (2) of Theorem \ref{thm::main_theorem}.
In section \ref{section::the_pi12-ca_case}, we prove the existence of sequences of $\beta_2$-models using multiple $\Sigma^1_1$-inductions; we then prove item (3) of Theorem \ref{thm::main_theorem}.
In section \ref{section::the_z2_case} we show item (4) of Theorem \ref{thm::main_theorem} using results from Montalb{\'a}n and Shore \cites{montalban2012limits,montalban2014limits}.

\textbf{Acknowledgements.} We would like to thank Leszek Ko{\l}odziejczyk and Yudai Suzuki for their comments on this paper.
The work of the second author is partially supported by JSPS KAKENHI grant numbers 19K03601 and 21KK0045.
We would also like to thank the support from the JSPS--RFBR Bilateral Joint Research Project JSPSBP120204809.

\section{Preliminaries}\label{section::preliminaries}
\subsection{Reverse Mathematics}
In this subsection we review some basic definitions for reverse mathematics in second-order arithmetic.
The standard reference for reverse mathematics is \cite{simpson2009sosoa}.
Reserve $\mathbb{N}$ for the set of natural numbers in second-order arithmetic and $\omega$ for the ``real'' set of natural numbers.

We consider formulas in the language $\mathcal{L}_2$ of second-order arithmetic.
A formula is $\Sigma^0_0$ iff it has only bounded number quantifiers; $\Sigma^0_{k+1}$ if it is equivalent to some formula of the form $\exists x.\varphi$ with $\varphi\in\Pi^0_k$; $\Pi^0_k$ if it is equivalent to the negation of a $\Sigma^0_k$ formula; and $\Delta^1_0$ if it is $\Sigma^0_n$ for some $n$.
Put $\Sigma^1_0 =\Delta^1_0$.
A formula is $\Sigma^1_{k+1}$ iff it is equivalent to a formula of the form $\exists X.\varphi$, with $\varphi\in \Pi^1_k$ for some $k\in\mathbb{N}$; $\Pi^1_k$ if it is equivalent to the negation of some $\Sigma^1_k$-formula.
A formula is $\Sigma^{i,X}_j$ iff its only set parameter is $X$.

Fix $i\in \{0,1\}$ and $k\in \omega$.
Let $\pi^i_k(e,\bar m, \bar X)$ be a $\Pi^i_k$-formula with exactly $e,\bar m$ as free number variables and $\bar X$ as free set variables.
$\pi^i_k$ is a universal lightface $\Pi^i_k$-formula iff for each $\Pi^i_k$ formula $\pi'$ with the same free variables as $\pi^i_k$, $\mathsf{RCA}_0$ proves
\[
    \forall e \exists e' \forall \bar m \forall \bar X.\pi^i_k(e',\bar m, \bar X) \leftrightarrow \pi'(e,\bar m, \bar X).
\]

We list the set-existence axiom systems used in this paper:
\begin{itemize}
    \item $\mathsf{RCA}_0$ contains the axioms for discrete ordered semirings, $\Sigma^0_1$-induction, and $\Delta^0_1$-comprehension.
    \item $\mathsf{ACA}_0$ is obtained by adding arithmetical comprehension to $\mathsf{RCA}_0$.
    We can alternatively characterize $\mathsf{ACA}_0$ by the axiom ``for all $X$, the Turing jump $\mathrm{TJ}(X)$ of $X$ exists''.
    \item $\mathsf{ACA}_0'$ is obtained by adding ``for all $n$ and $X$, there is a sequence $\tuple{X_0, \dots, X_n}$ such that $X_0 = X$ and $X_{k+1} = \mathrm{TJ}(X_k)$ for all $k<n$'' to $\mathsf{RCA}_0$.
    This axiom is read as ``for all $n$ and $X$, the iterated Turing jump $\mathrm{TJ}^n(X)$ of $X$ exists''.
    \item $\Pi^1_n\text{-}\mathsf{CA}_0$ is obtained by adding $\Pi^1_1$-comprehension to $\mathsf{RCA}_0$.
    Alternatively, we can add ``for all $X$, the hyperjump $\mathrm{HJ}(X)$ of $X$ exists'' to $\mathsf{RCA}_0$.
    \item $\Pi^1_1\text{-}\mathsf{CA}_0'$ is obtained by adding ``for all $n$ and $X$, there is a sequence of coded $\beta$-models $\tuple{X_0, \dots, X_n}$ such that $X\in X_0$ and, for all $i<n$, $X_i\in X_{i+1}$ and $X_i\subseteq_\beta X_{i+1}$'' to $\mathsf{RCA}_0$.
    An alternative formulation is to add to $\mathsf{RCA}_0$ the statement ``for all $n$ and $X$, the iterated hyperjump $\mathrm{HJ}^n(X)$ of $X$ exists'' to $\mathsf{RCA}_0$.
    \item $\mathrm{Strong} \; \Sigma^1_k \text{-}\mathsf{DC}_0$ is $\mathsf{ACA}_0$ plus the following scheme:
    \[
        \exists Z \forall n \forall Y.(\eta(n, (Z)^n,Y) \to \eta(n, (Z)^n,(Z)_n)),
    \] where $\eta(n,X,Y)$ is a $\Sigma^1_k$-formula in which $Z$ does not occur, $(Z)^n= \{(i,m) \mid (i,m)\in Z \land m<n\}$, and $(Z)_n = \{i \mid (i,n)\in Z\}$.
    \item $[\Sigma^1_1]^k\text{-}\mathsf{ID}$ states the existence of the sets inductively definable by combinations of $k$-many $\Sigma^1_1$-transfinite induction operators.
\end{itemize}
As far as we know, $\Pi^1_1\text{-}\mathsf{CA}_0'$ does not appear in the literature.
Its naming is in parallel to $\mathsf{ACA}_0'$.

Let $k\in\omega$, then $\mathcal{M}$ is a (coded) $\beta_k$-model iff every $\Pi^1_k$-sentence $\varphi$ with parameters in $\mathcal{M}$ is true in $\mathcal{M}$ iff it is true in the ground model.
For $k\geq 1$, $\mathrm{Strong} \; \Sigma^1_k \text{-}\mathsf{DC}_0$ is equivalent to ``for all $X$, there is a $\beta_k$-model $\mathcal{M}$ such that $X\in\mathcal{M}$''.
If $i = 1,2$, then $\mathrm{Strong} \; \Sigma^1_k \text{-}\mathsf{DC}_0$ is equivalent to $\Pi^1_k\text{-}\mathsf{CA}_0$.
Furthermore, if we assume $V = L$, then $\mathrm{Strong} \; \Sigma^1_k \text{-}\mathsf{DC}_0$ is equivalent to $\Pi^1_k\text{-}\mathsf{CA}_0$ for any $k$.
We denote the ground model by $\groundmodel$.
Given a coded model $\mathcal{M}$ and an $\mathcal{L}_2$-sentence $\varphi$ with parameters in $\mathcal{M}$, write $\mathcal{M}\models\varphi$ to mean that $\mathcal{M}$ satisfies $\varphi$.
See Section VII.2 of \cite{simpson2009sosoa} for the definition of the satisfaction relation $\models$ in second order arithmetic.

Let $\mathcal{M}, \mathcal{N}$ be coded models.
The sets $(\mathcal{M})_i$ of $\mathcal{M}$ are defined by $(\mathcal{M})_i = \{n \in \mathbb{N} \mid \tuple{n,i} \in \mathcal{M}\}$.
$\mathcal{M}$ is a submodel of $\mathcal{N}$ iff every set in $\mathcal{M}$ is also in $\mathcal{N}$, that is, for each $i\in\mathbb{N}$ there is $j\in\mathbb{N}$ such that $(\mathcal{M})_i = (\mathcal{N})_j$.
Given two coded models $\mathcal{M}$ and $\mathcal{N}$, $\mathcal{M}$ is a $\beta_k$-submodel of $\mathcal{N}$ iff for all $\Sigma^1_k$-formula $\varphi$ with parameters in $\mathcal{M}$, $\mathcal{M}\models \varphi\iff \mathcal{N}\models\varphi$.
When $\mathcal{M}$ is a $\beta_k$-submodel of $\mathcal{N}$, we write $\mathcal{M}\subseteq_{\beta_k}\mathcal{N}$.

\subsection{Reflection Principles}
Let $T$ be a finitely axiomatizable theory.
Let $\mathrm{Pr}_T$ be a standard provability predicate for $T$ and $\mathrm{Tr}_{\Pi^1_n}$ be a truth predicate for $\Pi^1_n$-sentences.
(Here, we only consider $\Pi^1_n$-sentences whose arithmetical parts are $\Sigma^{0}_{2}$ or $\Pi^{0}_{2}$ obtained by the normal form theorem. With $\mathsf{ACA}_{0}'$, we may also consider $\Pi^1_n$-sentences with nonstandard length arithmetical parts, but we do not need to use them in our discussion.)
The reflection principle $\Pi^1_n\text{-}\refl(T)$ is the sentence
\[
    \forall \varphi\in\Pi^1_n.\mathrm{Pr}_{T}(\ulcorner \varphi \urcorner) \rightarrow \mathrm{Tr}_{\Pi^1_n}(\ulcorner\varphi\urcorner).
\]
Note that we consider all $\Pi^1_n$-sentences inside our system, including `nonstandard' sentences.
It is also common to consider reflection \emph{schemes}.
Our principle is equivalent to the uniform reflection scheme $\Pi^1_n\text{-}\mathsf{RFN}(T)$, which consists of the formulas
    \[
        \forall x.\mathrm{Pr}_{T}(\ulcorner \varphi(x) \urcorner) \rightarrow \varphi(x)
    \]
for all (standard) $\Pi^1_n$-formulas.
In general,
\begin{proposition}
    Let $T$ be a finitely axiomatizable theory extending $\mathsf{ACA}_0$.
    $\Pi^1_n\text{-}\refl(T)$ and $\Pi^1_n\text{-}\mathsf{RFN}(T)$ are equivalent over $\mathsf{ACA}_0$.
\end{proposition}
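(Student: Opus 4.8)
The plan is to prove the two implications separately over $\mathsf{ACA}_0$, in both cases exploiting two properties of the partial truth predicate $\mathrm{Tr}_{\Pi^1_n}$. The first is \emph{schematic disquotation}: for each standard $\Pi^1_n$-formula $\varphi(\bar x)$, one has $\mathsf{ACA}_0\vdash\forall\bar x.(\varphi(\bar x)\leftrightarrow\mathrm{Tr}_{\Pi^1_n}(\ulcorner\varphi(\dot{\bar x})\urcorner))$, obtained by external induction on the build-up of $\varphi$ from the compositional clauses defining $\mathrm{Tr}_{\Pi^1_n}$. Because we restrict, via the normal form theorem, to $\Pi^1_n$-sentences whose arithmetical part is $\Sigma^0_2$ or $\Pi^0_2$, this truth predicate is a genuine $\Pi^1_n$-object available already in $\mathsf{ACA}_0$: its second-order part consists of $n$ standard-length set-quantifier blocks laid over the arithmetical truth predicate for $\Sigma^0_2/\Pi^0_2$-formulas, which $\mathsf{ACA}_0$ defines and proves compositional. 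The second property is the \emph{uniform provable reflection of truth}, namely $\mathsf{ACA}_0\vdash\forall\sigma\in\Pi^1_n.\,\bew{T}{\sigma\to\mathrm{Tr}_{\Pi^1_n}(\dot\sigma)}$, discussed below.

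For the implication $\Pi^1_n\text{-}\refl(T)\Rightarrow\Pi^1_n\text{-}\mathsf{RFN}(T)$, fix a standard $\Pi^1_n$-formula $\varphi(x)$; I want the scheme instance $\forall x.(\bew{T}{\varphi(\dot x)}\to\varphi(x))$. Reasoning in $\mathsf{ACA}_0+\Pi^1_n\text{-}\refl(T)$, fix $x$ and assume $\bew{T}{\varphi(\dot x)}$. The code of $\varphi(\dot x)$ is a term in $x$ and $\varphi(\dot x)$ is a $\Pi^1_n$-sentence, so instantiating the single sentence $\Pi^1_n\text{-}\refl(T)$ at this code yields $\mathrm{Tr}_{\Pi^1_n}(\ulcorner\varphi(\dot x)\urcorner)$. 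Schematic disquotation for $\varphi$ then gives $\varphi(x)$. Since $x$ and $\varphi$ were arbitrary, every instance of the scheme follows; note that $\Pi^1_n\text{-}\refl(T)$ automatically covers the sentences $\varphi(\dot x)$ for nonstandard $x$ as well.

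For the converse $\Pi^1_n\text{-}\mathsf{RFN}(T)\Rightarrow\Pi^1_n\text{-}\refl(T)$, the key move is to apply the scheme to the truth predicate itself. Since $\mathrm{Tr}_{\Pi^1_n}(y)$ is a fixed standard $\Pi^1_n$-formula with single free variable $y$, it is a legitimate instance, and the scheme gives $\forall y.(\bew{T}{\mathrm{Tr}_{\Pi^1_n}(\dot y)}\to\mathrm{Tr}_{\Pi^1_n}(y))$. Working in $\mathsf{ACA}_0$ together with this instance, let $\sigma$ be an arbitrary code of a $\Pi^1_n$-sentence with $\bew{T}{\sigma}$; I must derive $\mathrm{Tr}_{\Pi^1_n}(\ulcorner\sigma\urcorner)$. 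The uniform provable reflection of truth gives $\bew{T}{\sigma\to\mathrm{Tr}_{\Pi^1_n}(\dot\sigma)}$, and combining this with $\bew{T}{\sigma}$ through the provable-modus-ponens derivability condition for $\mathrm{Pr}_T$ yields $\bew{T}{\mathrm{Tr}_{\Pi^1_n}(\dot\sigma)}$; the displayed scheme instance then delivers $\mathrm{Tr}_{\Pi^1_n}(\ulcorner\sigma\urcorner)$. As $\sigma$ ranges over \emph{all} codes of $\Pi^1_n$-sentences, including nonstandard ones, this establishes the single sentence $\Pi^1_n\text{-}\refl(T)$.

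The main obstacle is the uniform provable reflection of truth used in the last step. Unlike schematic disquotation it must hold \emph{internally} and \emph{uniformly}, covering nonstandard $\sigma$, so it cannot be read off from finitely many external inductions. I would establish it by formalizing the construction of $\mathrm{Tr}_{\Pi^1_n}$ and exhibiting a primitive recursive function that, given a code $\sigma$, outputs a $T$-proof of $\sigma\to\mathrm{Tr}_{\Pi^1_n}(\ulcorner\sigma\urcorner)$, with $\mathsf{ACA}_0$ verifying that it does so. The $n$ outer set-quantifier blocks are peeled off by a standard length-$n$ external recursion that reduces the claim to its arithmetical matrix; the remaining work is to show, uniformly in a possibly nonstandard $\Sigma^0_2$- or $\Pi^0_2$-matrix, that $T$ proves the matrix implies its value under the arithmetical truth predicate. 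This in turn reduces to the provable compositionality and provable reflection of the arithmetical truth predicate, which are available in $\mathsf{ACA}_0$; the care needed is precisely to keep the proof-construction primitive recursive and to check that the induction available in $\mathsf{ACA}_0$ suffices to verify its correctness for all, including nonstandard, matrices.
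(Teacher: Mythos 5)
Your proof is correct and follows essentially the same route as the paper's: the first direction is identical (instantiate the single reflection sentence at $\ulcorner\varphi(\dot x)\urcorner$ and disquote), and in the second direction your single scheme instance $\mathrm{Tr}_{\Pi^1_n}(y)$ plays exactly the role of the paper's universal lightface $\Pi^1_n$-formula $\psi$. Your ``uniform provable reflection of truth'' $\bew{T}{\sigma\to\mathrm{Tr}_{\Pi^1_n}(\dot\sigma)}$ is precisely the lemma the paper invokes, more tersely, as ``$\bew{T}{\varphi\leftrightarrow\psi(e)}$ holds''; you are simply more explicit that this must be established uniformly, inside the model, for nonstandard $\sigma$.
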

\begin{proof}
    Work inside a model of $\mathsf{ACA}_0$.
    First suppose that the reflection principle $\Pi^1_n\text{-}\refl(T)$ holds.
    Fix a standard formula $\varphi(x)$.
    Let $a\in \mathbb{N}$ be such that $\bew{T}{\varphi(a)}$.
    $\varphi(a)$ is a sentence inside the model, so we can use the reflection principle, so $\varphi(a)$ is true.

    Now, suppose the reflection scheme $\Pi^1_n\text{-}\mathsf{RFN}(T)$ holds.
    Let $\varphi$ be a sentence inside the model such that $\bew{T}{\varphi}$ holds.
    For a contradiction, suppose that $\varphi$ is false.
    Fix a standard universal lightface $\Pi^1_n$-formula $\pi^1_n(x)$.
    For some $e\in \mathbb{N}$, $\bew{\mathsf{RCA}_0}{\varphi\leftrightarrow\pi^1_n(e)}$ holds.
    Therefore $\bew{T}{\pi^1_n(e)}$.
    The reflection instance for $\pi^1_n$ implies that $\pi^1_n(e)$ is true.

    Note that we may take the arithmetical parts of $\varphi$ and $\pi^1_n$ to be $\Sigma^0_2$.
    That is, $\neg \varphi$ is $\exists X\forall m_0 \exists m_1. \psi(X,m_0,m_1)$.
    Fix $X_0$ such that $\forall m_0 \exists m_1. \psi(X_0,m_0,m_1)$ holds.
    We can use $\mathsf{ACA}_0$ to construct an $\omega$-model $\mathcal{M}$ which satisfies $\mathsf{RCA}_0$ and includes $X_0$.
    Since $X_0\in\mathcal{M}$, we have $\mathcal{M}\models \mathsf{RCA}_0 + \neg\varphi$, and so $\mathcal{M}\models \neg \pi^1_n(e)$.
    Since the arithmetical part of $\pi^1_n$ is $\Sigma^0_2$, we can show that $\mathcal{M}\models \pi^1_n(e)$ also holds.
    We conclude that $\varphi$ is true.
\end{proof}

\subsection{Determinacy Axioms}
Let $X$ be some set and $A\subseteq X^\mathbb{N}$.
In a Gale--Stewart game, two players $\mathsf{I}$ and $\mathsf{II}$ alternate picking elements of $X$.
The player $\mathsf{I}$ wins a run $\alpha$ of the game iff $\alpha\in A$.
A Gale--Stewart game is determined iff there is a winning strategy for one of the players.
We consider here Gale--Stewart games on $\mathbb{N}$ and $\{0,1\}$.
The axiom $\Gamma$-$\mathsf{Det}$ states that every game on $\mathbb{N}$ whose payoff is $\Gamma$-definable is determined; and $\Gamma$-$\mathsf{Det}^*$ states the same for games on $\{0,1\}$.
For an introduction to determinacy in second-order arithmetic, see Sections V.8 and VI.5 of \cite{simpson2009sosoa}.
Yoshii \cite{yoshii2017survey} surveys results on the reverse mathematics of determinacy.

The difference hierarchy of $\Sigma^0_k$ captures all boolean combinations of $\Sigma^0_k$ sets.
It is usually defined by induction: a set $X$ is $(\Sigma^0_k)_{1}$ iff $X$ is $\Sigma^0_k$; and $X$ is $(\Sigma^0_k)_{n+1}$ iff $X$ is the difference of a $\Sigma^0_k$ set and a $(\Sigma^0_k)_n$ set.
To state the determinacy axioms studied in this paper we need a formalized version of the difference hierarchy.
Fix $k\in\omega$.
Let $x$ be a number variable and $f$ be a distinguished second-order variable.
$\varphi(f)$ is a $(\Sigma^0_k)_x$ formula iff there is a $\Sigma^0_k$ formula $\psi(y,f)$ (possibly with other free variables) such that: $\psi(x,f)$ always holds; if $z<y<x$ then $\psi(z,f)$ implies $\psi(y,f)$; and $\varphi(f)$ holds iff the least $y\leq z$ such that $x=y\lor \psi(y,f)$ holds is even.
Intuitively, we think of $\varphi(f)$ as the disjunction $\psi(0,f) \lor \psi(2,f)\lor \cdots \lor \psi(2 \lfloor{x/2}\rfloor,f)$.

Given $k\in\omega$, $\forall n.(\Sigma^0_k)_n\text{-}\mathsf{Det}$ is the statement
\[
    \forall n (\exists \sigma \forall \tau \varphi(\sigma\otimes\tau) \lor \exists \tau \forall \sigma \neg\varphi(\sigma\otimes\tau)),
\]
where $\varphi$ is a $(\Sigma^0_k)_n$-formula.
Here, $\sigma\otimes\tau$ is the play obtained when $\mathsf{I}$ uses the strategy $\sigma$ and $\mathsf{II}$ uses the strategy $\tau$.
The definition for $\forall n.(\Sigma^0_k)_n\text{-}\mathsf{Det}^*$ is obtained by restricting the players to playing in the Cantor space.

We now prove a few folklore results.
As far as we are aware, there results are not published, but their proofs are similar to other existing proofs.
We first look at determinacy on Cantor space: Nemoto \emph{et al.} \cite{nemoto2007infinite} proved that the $\Delta^0_1$-$\mathsf{Det}^*$ and $\Sigma^0_1$-$\mathsf{Det}^*$ are equivalent to $\mathsf{WKL}_0$, and $(\Sigma^0_1)_2$-$\mathsf{Det}^*$ is equivalent to $\mathsf{ACA}_0$.
Furthermore, $(\Sigma^0_1)_n\text{-}\mathsf{Det}^*$ is equivalent to $\mathsf{ACA}_0$ for all $n\geq 2$.
On the other hand, determinacy of arbitrary finite differences of open sets is equivalent to $\mathsf{ACA}_0'$.
\begin{proposition}\label{prop::sigma01det-n_implies_ACA0prime}
    Over $\mathsf{ACA}_0$, $\forall n.(\Sigma^0_1)_n\text{-}\mathsf{Det}^*$ implies $\mathsf{ACA}_0'$.
\end{proposition}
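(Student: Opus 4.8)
The plan is to use the definition of $\mathsf{ACA}_0'$ directly: it asserts that for every set $X$ and every $n$ there is a sequence $\tuple{X_0,\dots,X_n}$ with $X_0 = X$ and $X_{k+1} = \mathrm{TJ}(X_k)$. So I would fix $X$ and $n$, work inside $\mathsf{ACA}_0 + \forall n.(\Sigma^0_1)_n\text{-}\mathsf{Det}^*$, and produce this sequence. The key observation is that, although each fixed level $(\Sigma^0_1)_m\text{-}\mathsf{Det}^*$ is only equivalent to $\mathsf{ACA}_0$ (Nemoto et al.\ \cite{nemoto2007infinite}), and hence gives no more than the finitely many iterated jumps already available in $\mathsf{ACA}_0$, the single sentence $\forall n.(\Sigma^0_1)_n\text{-}\mathsf{Det}^*$ supplies determinacy at \emph{every} level $m$ in the model, including nonstandard ones. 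A single game whose difference rank grows with $n$ will then let me recover all $n$ jumps at once.

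Concretely, I would build, uniformly and primitive-recursively in $X$ and $n$, a game $G^X_n$ on Cantor space whose payoff is a $(\Sigma^0_1)_{h(n)}$-formula for an explicit primitive-recursive $h$ (one can arrange $h$ to be linear in $n$). The game is the natural $n$-fold iterate of the Nemoto et al.\ jump game: writing $X_i := \mathrm{TJ}^i(X)$, at level $i$ a player certifies the membership facts about $X_i$ needed to justify a convergence/divergence claim at level $i+1$, and a challenge by the opponent forces a descent to level $i-1$, where the relevant query — which is $\Sigma^0_1$ in $X_{i-1}$ — must in turn be certified by an exhibited computation, an open and hence finitely checkable condition. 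Each descent is one ``mind change'', and since there are at most $n$ levels the whole payoff is a difference of $h(n)$ open sets. I would then show, inside $\mathsf{ACA}_0$ and uniformly in $n$, that neither player can win $G^X_n$ unless the play it forces actually encodes the correct values $X, \mathrm{TJ}(X),\dots,\mathrm{TJ}^n(X)$; consequently a winning strategy $\sigma$ for $G^X_n$ is a single set from which arithmetical comprehension decodes each $X_i$ and assembles the sequence $\tuple{X_0,\dots,X_n}$.

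Finally I would apply $\forall n.(\Sigma^0_1)_n\text{-}\mathsf{Det}^*$ with the level $m = h(n)$ to obtain a winning strategy for $G^X_n$, and read off the iterated jumps as above. Using a single game — rather than $n$ separate games at levels $1,\dots,n$ — is essential: determinacy only asserts the bare existence of a strategy, so collecting $n$ separate strategies into a sequence would require a form of choice unavailable over $\mathsf{ACA}_0$, whereas one strategy yields the entire sequence by arithmetical comprehension. Since $X$ and $n$ were arbitrary, this gives $\mathsf{ACA}_0'$.

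I expect the main obstacle to be the game construction together with its verification in the weak base theory. Two points need care. First, the difference rank of the payoff must be a definite term $h(n)$, so that the construction still makes sense and the payoff is genuinely $(\Sigma^0_1)_{h(n)}$ even when $n$ is nonstandard. Second, the ``challenge and certify'' bookkeeping must be shown to close off correctly using only $\Sigma^0_1$-induction, so that the argument that a winning strategy certifies all $n$ jumps goes through uniformly in $n$ rather than only for each standard $n$ by an external induction.
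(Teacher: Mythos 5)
Your plan is essentially the paper's own proof: a single verifier/refuter game, uniform in $n$, in which membership queries about $\mathrm{TJ}^i(X)$ are certified by exhibited witnesses at level $i-1$ and challenges force a descent (one mind change per level), coded into Cantor space so the payoff is a finite difference of open sets, with a winning strategy for the responding player decoding the whole jump sequence. The approach and the key uniformity points you flag match the paper's argument.
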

\begin{proof}
    Fix $n\in\mathbb{N}$ and $X\subseteq \mathbb{N}$.
    We prove the existence of a sequence $\tuple{X_0, \dots, X_n}$ of sets such that $X_0 = X$ and $X_{i+1} = \mathrm{TJ}(X_i)$, for $i<n$.
    Let $\pi^0_1$ be a fixed universal lightface $\Pi^0_1$-formula.
    The Turing Jump $\mathrm{TJ}(X)$ of $X$ is the set of $m$ such that $\neg \pi^0_1(m,X)$ holds.

    We claim that there is a set $\tilde{\tau}$ computing universal winning strategies for all $(\Sigma^0_1)_{k}$ games on Cantor space with only $X$ as a parameter, for all $k\in\mathbb{N}$.
    Consider the following game: $\mathsf{I}$ chooses an index $e$ for the payoff of a $(\Sigma^0_1)_{k}$ game, coded as a sequence of $e$-many $0$s followed by a $1$; $\mathsf{II}$ answers with their choice of role in the game with index $e$; then $\mathsf{I}$ and $\mathsf{II}$ play the game with index $e$; whoever wins the subgame wins the whole game.
    The payoff of this game has complexity $(\Sigma^0_1)_{2k+2}$.
    As we have $\forall n.(\Sigma^0_1)_n\text{-}\mathsf{Det}^*$, the game above is determined.
    Since $\mathsf{I}$ cannot win, $\mathsf{II}$ has a winning strategy $\tilde{\tau}$; this $\tilde{\tau}$ computes winning strategies for all $(\Sigma^0_1)_{k}$ games.

    Since we have $\forall n.(\Sigma^0_1)_n\text{-}\mathsf{Det}^*$, we also have $(\Sigma^0_1)_2\text{-}\mathsf{Det}^*$, and so $\mathsf{ACA}_0$ is true.
    Let $\mathcal{M}$ be a strict $\beta$-model including $X$ and a universal winning strategy $\tilde{\tau}$ for $(\Sigma^0_1)_{2^{n+2}}$.
    The strict $\beta$-model $\mathcal{M}$ satisfies $\mathsf{WKL}_0$ and induction.
    Since $\tilde{\tau}\in\mathcal{M}$, every $(\Sigma^0_1)_{2^{n+2}}$ game on Cantor space with only $X$ as a parameter is determined in $\mathcal{M}$.

    We consider the following two player game inside $\mathcal{M}$.
    $\mathsf{I}$ starts the game by playing $\tuple{m,i}$, with $i\leq n$, to ask $\mathsf{II}$ whether $m\in X_i$.
    $\mathsf{II}$ then plays $1$ to answer `yes', or $0$ to answer `no'.
    If $\mathsf{II}$ answers $1$, they must show that $m\in X_i$; if $\mathsf{II}$ answers $0$, $\mathsf{I}$ must show that $m\in X_i$.
    Denote by $\mathsf{V}$ (verifier) the player who is trying to show $m\in X_i$ and by $\mathsf{R}$ (refuter) the other player.

    Suppose $i>0$.
    $\mathsf{V}$ wants to show $m\in X_i$.
    So they play a finite sequence $X^f_{i-1}$ witnessing so.
    $X^f_{i-1}$ is intended to be an initial segment of $X_{i-1}$.
    Then, $\mathsf{R}$ plays $m'\leq \mathrm{lh}(X^f_{i-1})$, to state that $X_{i-1}(m') \neq X^f_{i-1}(m')$.
    If $X^f_{i-1}(m') = 0$, $\mathsf{R}$ is stating that $m\in X_{i-1}$, so $\mathsf{V}$ and $\mathsf{R}$ exchange roles.
    Otherwise, $\mathsf{R}$ is asking $\mathsf{V}$ to show that $m\in X_{i-1}$, and the roles stay the same.
    Either way, $\mathsf{V}$ and $\mathsf{R}$ proceed to argue whether $m'\in X_{i-1}$.

    Now suppose $\mathsf{V}$ and $\mathsf{R}$ are arguing whether $m'\in X_0$.
    $\mathsf{V}$ wins (automatically) iff $m'\in X_0$.

    Before we consider the descriptive complexity of the payoffs of these games, consider the following example:
    \begin{align*}
        \mathsf{I}  \;\;& \tuple{m,3} \;\; \phantom{\text{yes} \;\; X^f_2}\;\; \text{challenge } m'\;\; X^f_1 \;\; \phantom{\text{challenge } m''} \;\; X^f_0 \\
        \mathsf{II} \;\;& \phantom{\tuple{m,i}} \;\; \text{yes} \;\; X^f_2\;\; \phantom{\text{challenge } m'\;\; X^f_1} \;\; \text{challenge } m'' \phantom{\;\; X^f_0}
    \end{align*}
    Note that in this game $\mathsf{I}$ and $\mathsf{II}$ play both roles $\mathsf{V}$ and $\mathsf{R}$.

    Since we want a game with payoff in Cantor space, the players cannot directly play natural numbers.
    Code a play of a natural number $m$ by a sequence of plays consisting of $m$ many zeroes and ending with a one.
    We identify finite sequences of natural numbers with their codes.
    As the players must alternate playing, when it is a player's turn to play a natural number (\emph{i.e}, a sequence of zeroes ending with a one), the other player is on stand-by, and must plays only zeroes.
    In particular, the example play above becomes:
    \begin{align*}
        \mathsf{I}\;\;&0^{\tuple{m,3}}1 \phantom{0^{1}1 0^{X^f_{2}}1} 0^{m'}1 0^{X^f_{1}}1 \phantom{0^{m''}1} 0^{X^f_{0}}  \\
        \mathsf{II}\;\;&\phantom{0^{\tuple{m,3}}1} 0^{1}1 0^{X^f_{2}}1 \phantom{0^{m'}1 0^{X^f_{1}}1} 0^{m''}1 \phantom{0^{X^f_{0}}} \\
    \end{align*}
    We can describe the restrictions on each move using a $\Sigma^0_1$ set or a $\Pi^0_1$ set, depending on which player must follow the rule.
    Also, we can check if the witnesses given by $\mathsf{V}$ are right with a $\Pi^0_1$ condition.
    Therefore we can write the game above as a finite difference of $2^{n+2}$ open sets on Cantor space, and it is determined. (The complexity bound of the game need not be strict, as we assume $\forall n.(\Sigma^0_1)_n\text{-}\mathsf{Det}^*$.)

    We claim that $\mathsf{I}$ cannot have a winning strategy inside $\mathcal{M}$.
    Suppose $\sigma\in\mathcal{M}$ is a winning strategy for $\mathsf{I}$.
    Now, after $\mathsf{I}$ plays $\sigma(\tuple{}) = \tuple{m,i}$, $\mathsf{II}$ must try to show that $m\in X_i$ or ask $\mathsf{I}$ to do so; for either choice the subsequent game is symmetric, only changing who is going to play $\mathsf{V}$ and $\mathsf{R}$.
    Therefore if $\mathsf{I}$ wins as $\mathsf{V}$, then $\mathsf{II}$ can also win as $\mathsf{V}$, using essentially the same strategy.
    Therefore $\mathsf{II}$ can use a (slightly modified variant of) $\sigma$ to defeat $\sigma$.
    And so $\sigma$ cannot be winning.

    Let $\tau$ be a winning strategy for $\mathsf{II}$ inside $\mathcal{M}$ and define $Z=\{\tuple{m, i} \mid \tau(\tuple{m,i}) = 1\}$.
    We can show that $(Z)_0 = X$ and $(Z)_{i+1} = \mathrm{TJ}((Z)_i)$ for $i<n$ by induction inside $\mathcal{M}$.
    As $\mathcal{M}$ is a strict $\beta$-model and $(Z)_{i+1} = \mathrm{TJ}((Z)_i)$ is a $\Pi^0_1$ statement, we also have that $(Z)_0 = X$ and $(Z)_{i+1} = \mathrm{TJ}((Z)_i)$ hold outside of $\mathcal{M}$.
    We conclude that $\mathsf{ACA}_0'$ holds.
\end{proof}

\begin{proposition} \label{prop::cantor_reduce_complexity}
    Over $\mathsf{ACA}_0$, $\mathsf{ACA}_0'$ implies $\forall n.(\Sigma^0_1)_n\text{-}\mathsf{Det}^*$.
    Therefore, $\forall n.(\Sigma^0_1)_n\text{-}\mathsf{Det}^*$ is equivalent to a $\Pi^1_2$ sentence.
\end{proposition}
\begin{proof}
    We proved that $\forall n.(\Sigma^0_1)_n\text{-}\mathsf{Det}^*$ implies $\mathsf{ACA}_0'$ in Proposition \ref{prop::sigma01det-n_implies_ACA0prime}.
    We show that the converse holds by a generalization of Theorem 3.7 of \cite{nemoto2007infinite}.
    As $\mathsf{ACA}_0'$ is a $\Pi^1_2$-sentence, we will have the proposition.

    Fix a difference $\varphi(f)$ of $2k$ many $\Sigma^0_1$ formulas with set parameter $X$. $\varphi(F)$ can be thought as a formula
    \[
        \exists n.R_0(f[n]) \land \psi_0(f) \land \cdots \exists n.R_{k-1}(f[n]) \land \psi_{k-1}(f),
    \]
    where the $R_l$ are $\Pi^0_0$-formulas and the $\psi_l$ are $\Pi^0_1$-formulas, for all $l\leq k$.
    Define the sets:
    \begin{itemize}
        \item $W_0 := \{u\in 2^{<\mathbb{N}} \mid \exists v\subseteq u.R_0(v)\text{ and $\mathsf{I}$ wins $\psi_0(f)$ starting at $u$}\}$; and
        \item $W_{l+1} := \{u\in W_l \mid \exists v\subseteq u.R_{l+1}(v)\text{ and $\mathsf{I}$ wins $\psi_{l+1}(f)$ starting at $u$ and playing inside $W_l$}\}$.
    \end{itemize}
    Define the game $\psi^*(f) \leftrightarrow \exists n.W_{k-1}(f[n])$.
    By Theorem 3.6 of \cite{nemoto2007infinite}, $W_0$ is $\Pi^0_1$ and each $W_{l+1}$ is $\Pi^0_1$ in $W_l$ as a parameter.
    By bounded induction on $n$, we can compute $W_{k-1}$ from the $k$th jump of $X$. Analogously to Claim 3.7.1, from a strategy for $\mathsf{I}$ in $\psi^*$ we can compute a strategy for $\mathsf{I}$ in the game with payoff $\varphi(f)$, and the same holds with $\mathsf{II}$ in place of $\mathsf{I}$.
    As $\psi^*$ is a $\Sigma^0_1$ game on Cantor space, it is determined.
    Therefore the game with payoff $\varphi$ is also determined.
    As this argument holds for any $k\in\mathbb{N}$, $\forall n.(\Sigma^0_1)_n\text{-}\mathsf{Det}^*$ is true.
\end{proof}

We do not explicitly consider the differences of $\Sigma^0_2$ sets on Cantor space.
Nemoto \emph{et al.} \cite{nemoto2007infinite} proved that, for $1<k\leq \omega$,  $(\Sigma^0_2)_k$-$\mathsf{Det}$ and $(\Sigma^0_2)_{k-1}$-$\mathsf{Det}^*$ are equivalent over $\mathsf{RCA}_0$.
We can adapt their proof to show:
\begin{proposition}
    $\forall n.(\Sigma^0_2)_n$-$\mathsf{Det}$ and $\forall n.(\Sigma^0_2)_n$-$\mathsf{Det}^*$ are equivalent over $\mathsf{RCA}_0$.
\end{proposition}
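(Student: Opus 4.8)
The plan is to prove both implications by the two standard translations between games on $\{0,1\}$ and games on $\mathbb{N}$, carried out uniformly in $n$ so that the finite jump in difference-hierarchy level introduced by each translation is absorbed by the outer quantifier $\forall n$. This is essentially the level-by-level equivalence $(\Sigma^0_2)_k$-$\mathsf{Det}\leftrightarrow(\Sigma^0_2)_{k-1}$-$\mathsf{Det}^*$ of Nemoto \emph{et al.} \cite{nemoto2007infinite}; the only thing that really needs checking is that their codings are uniform in $k$ and provably so in $\mathsf{RCA}_0$, including for nonstandard $k\in\mathbb{N}$.

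For the direction $\forall n.(\Sigma^0_2)_n$-$\mathsf{Det}\Rightarrow\forall n.(\Sigma^0_2)_n$-$\mathsf{Det}^*$, I would realize a game on $\{0,1\}$ with $(\Sigma^0_2)_n$ payoff $A$ as a game on $\mathbb{N}$ in which the first player to leave $\{0,1\}$ loses. Writing $t_\mathsf{I},t_\mathsf{II}$ for the first stage at which $\mathsf{I}$, resp. $\mathsf{II}$, plays a number $>1$, the Baire payoff is $\{x:t_\mathsf{II}<t_\mathsf{I}\}\cup(\{x:t_\mathsf{I}=t_\mathsf{II}=\infty\}\cap A)$, an open set unioned with the intersection of a closed set and $A$; a routine computation in the difference hierarchy shows this is $(\Sigma^0_2)_{n+c}$ for a fixed constant $c$, uniformly in $n$. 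Applying $\mathsf{Det}$ at level $n+c$ yields a winning strategy, and since any illegal move loses, such a strategy never plays outside $\{0,1\}$ against a legal opponent, so its restriction to Cantor space is a winning strategy for the original game.

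The converse $\forall n.(\Sigma^0_2)_n$-$\mathsf{Det}^*\Rightarrow\forall n.(\Sigma^0_2)_n$-$\mathsf{Det}$ is the harder direction and the main obstacle. Here I would code a game on $\mathbb{N}$ with $(\Sigma^0_2)_n$ payoff $A$ into a game on $\{0,1\}$ by having each player spell out their successive moves $m$ as blocks $0^m1$, as in the coding used in Proposition \ref{prop::sigma01det-n_implies_ACA0prime}. A player who plays $0$ from some point on fails to complete a move and loses, and the completion condition ``$\mathsf{I}$ (resp. $\mathsf{II}$) plays infinitely many $1$'s'' is $\Pi^0_2$. Writing $P_\mathsf{I},P_\mathsf{II}$ for these conditions and $d(y)$ for the decoded play, the Cantor payoff is $\{y:P_\mathsf{I}\wedge(\neg P_\mathsf{II}\vee d(y)\in A)\}$.

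The key step, and the one I expect to require the most care, is verifying that this payoff lies in a level $(\Sigma^0_2)_{n'}$ with $n'$ bounded by a fixed function of $n$, uniformly and provably in $\mathsf{RCA}_0$: the $\Pi^0_2$ completion conditions must be merged with the $\Sigma^0_2$-difference presentation of $A\circ d$ so that the total level stays controlled (in Nemoto \emph{et al.}'s accounting it in fact drops by one). Once this is established, $\mathsf{Det}^*$ at level $n'$ gives a Cantor winning strategy, and since in the Baire game no player ever stalls, translating each move $m$ to the block $0^m1$ converts it into a winning strategy for the original game on $\mathbb{N}$. The entire argument must be carried out uniformly in the possibly nonstandard parameter $n$, which is exactly what lets us pull the finite level shift under $\forall n$ and conclude the stated equivalence.
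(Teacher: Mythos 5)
Your proposal is correct and follows essentially the same route as the paper: the easy direction by viewing a Cantor game as a Baire game with a ``first to leave $\{0,1\}$ loses'' clause, and the hard direction by the block coding $0^m1$, with the bounded level shift absorbed by the outer $\forall n$. The only difference is presentational: the paper delegates the complexity bookkeeping for the hard direction to Lemma 4.2 of Nemoto \emph{et al.} (which gives $A\in(\Sigma^0_2)_n\mapsto A^*\in(\Sigma^0_2)_{n+2}$ preserving winning strategies), whereas you sketch that coding yourself and correctly flag its level computation as the step needing care.
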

\begin{proof}
    Every game on Cantor space can be seen as a game on Baire space by adding a $\Pi^0_1$ condition: $\mathsf{I}$ and $\mathsf{II}$ play only zeroes and ones.
    Therefore a game in Cantor space which payoff is in $(\Sigma^0_2)_n$ is still a $(\Sigma^0_2)_n$ game in Baire space.
    So $\forall n.(\Sigma^0_2)_n$-$\mathsf{Det}$ implies $\forall n.(\Sigma^0_2)_n$-$\mathsf{Det}^*$.

    By Lemma 4.2 of \cite{nemoto2007infinite}, if a game on Baire space has payoff $A\in (\Sigma^0_2)_n$, then there is a game on Cantor space with payoff $A^*\in (\Sigma^0_2)_{n+2}$ such that $\mathsf{I}$($\mathsf{II}$) has a winning strategy in $A$ iff $\mathsf{I}$($\mathsf{II}$) has a winning strategy in $A^*$.
    Therefore $\forall n.(\Sigma^0_2)_n$-$\mathsf{Det}^*$ implies $\forall n.(\Sigma^0_2)_n$-$\mathsf{Det}$.
\end{proof}

For determinacy on Baire space the following results are known: Steel \cite{steel1977phdthesis} proved that $\Delta^0_1\text{-}\mathsf{Det}$ and $\Sigma^0_1\text{-}\mathsf{Det}$ are equivalent to $\mathsf{ATR}_0$ over $\mathsf{ACA}_0$; and, Tanaka \cite{tanaka1990weak} proved that $(\Sigma^0_1)_2\text{-}\mathsf{Det}$ is equivalent to $\Pi^1_1\text{-}\mathsf{CA}_0$ over $\mathsf{ATR}_0$.
Similar to finite differences of open sets on Cantor space, where $\forall n.(\Sigma^0_1)_n\text{-}\mathsf{Det}^*$ proves $\mathsf{ACA}_0'$, we can use the determinacy of arbitrary finite differences of open sets in Baire space to prove $\Pi^1_1\text{-}\mathsf{CA}_0'$.
\begin{proposition} \label{prop::sigma01det-n_implies_seq_of_beta-models}
    Over $\mathsf{ACA}_0$, $\forall n.(\Sigma^0_1)_n\text{-}\mathsf{Det}$ implies $\Pi^1_1\text{-}\mathsf{CA}_0'$.
\end{proposition}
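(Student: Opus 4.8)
The plan is to imitate the proof of Proposition \ref{prop::sigma01det-n_implies_ACA0prime}, replacing the Turing jump by the hyperjump. Since $\Pi^1_1\text{-}\mathsf{CA}_0'$ is equivalent to the statement that for all $n$ and $X$ the iterated hyperjump $\mathrm{HJ}^n(X)$ exists, it suffices, given $n\in\mathbb{N}$ and $X\subseteq\mathbb{N}$, to produce a sequence $\tuple{X_0,\dots,X_n}$ with $X_0=X$ and $X_{i+1}=\mathrm{HJ}(X_i)$ for $i<n$. I would fix a representation of the hyperjump by well-founded trees: using a universal lightface $\Pi^1_1$-formula, write $m\in\mathrm{HJ}(Y)$ iff the $Y$-recursive tree $T^Y_m$ is well-founded, so that a membership query to $\mathrm{HJ}(Y)$ becomes a well-foundedness claim about a tree whose nodes are decided by finite computations $\{m\}^{Y}(\cdot)$ with oracle $Y$.

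As in the Cantor case, Player $\mathsf{I}$ opens by playing $\tuple{m,i}$ with $i\le n$, asking whether $m\in X_i$, and Player $\mathsf{II}$ answers $1$ or $0$, thereby assigning the roles of verifier $\mathsf{V}$ and refuter $\mathsf{R}$. The only new ingredient is the descent from level $i$ to level $i-1$. To argue ``$m\in X_i$'', i.e.\ ``$T^{X_{i-1}}_m$ is well-founded'', the players build a sequence $\sigma$ that $\mathsf{R}$ intends to be an infinite branch; at any stage $\mathsf{V}$ may halt and claim that the current node $\sigma\upharpoonright k$ has already left $T^{X_{i-1}}_m$, exhibiting the finite computation $\{m\}^{X_{i-1}}(\sigma\upharpoonright k)$ together with the finitely many oracle answers it uses. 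Player $\mathsf{R}$ then either accepts the computation, in which case $\mathsf{V}$ has shown $\sigma$ is not a branch and wins this level, or challenges one oracle answer ``$X_{i-1}(m')=b$'', exactly as a challenged bit in Proposition \ref{prop::sigma01det-n_implies_ACA0prime}, swapping roles when $b=0$ and keeping them when $b=1$; the players then descend to arguing ``$m'\in X_{i-1}$''. If $\mathsf{V}$ never halts, $\mathsf{R}$ is deemed to have produced an infinite branch and wins the level. Crucially, each descent strictly lowers the level index, so there are at most $n$ descents before reaching level $0$, where ``$m'\in X_0$'' is decided outright by $X$.

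The key computation is that, coding $\mathbb{N}$-plays into Cantor or Baire space as before, the payoff can be written as a finite difference of open sets whose length is bounded by $cn$ for a fixed standard $c$, uniformly in $n$: each level contributes only the closed condition ``$\mathsf{R}$'s branch is never successfully refuted'' together with the finite (open) verification data and a single role bit, and the nesting depth is bounded by $n$. Thus the game is $(\Sigma^0_1)_{cn}$ and is determined by $\forall n.(\Sigma^0_1)_n\text{-}\mathsf{Det}$. As before, $\mathsf{I}$ can have no winning strategy: $\mathsf{II}$ may use its first answer to place $\mathsf{I}$ in whichever role it likes, and a strategy winning for $\mathsf{I}$ in both roles would witness both $m\in X_i$ and $m\notin X_i$. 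Hence $\mathsf{II}$ has a winning strategy $\tau$, and a symmetric argument shows the answers dictated by $\tau$ are correct: were $\tau$ to misclassify some $m$ at level $i$, $\mathsf{I}$ could defeat it by playing a genuine branch of $T^{X_{i-1}}_m$ when the tree is really ill-founded, or by refuting every attempted branch when it is really well-founded. By $\mathsf{ACA}_0$ the set $Z=\{\tuple{m,i} : \tau(\tuple{m,i})=1\}$ exists, and correctness gives $(Z)_0=X$ and $(Z)_{i+1}=\mathrm{HJ}((Z)_i)$ for $i<n$, the desired iterated hyperjump.

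I expect the main obstacle to be the complexity bookkeeping of the third paragraph: verifying that the infinite well-foundedness subgames, the role-swaps, and the at-most-$n$ descents really assemble into a single payoff lying at a level of the difference hierarchy linear in $n$ and definable uniformly in $n$, so that the one scheme $\forall n.(\Sigma^0_1)_n\text{-}\mathsf{Det}$ applies. A secondary subtlety is the correctness argument for $\tau$: the punishing plays for $\mathsf{I}$ need not be effective, and determinacy only hands us strategies, so the argument must run by contradiction against $\mathsf{II}$'s strategy rather than by exhibiting $\mathsf{I}$'s; one must also track role-swaps carefully so that the bottom-level condition ``$m'\in X_0$'' is tied to the original claim with the correct parity.
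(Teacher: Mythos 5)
Your proposal is sound in outline but takes a genuinely different route from the paper's. The paper targets the other formulation of $\Pi^1_1\text{-}\mathsf{CA}_0'$ --- a chain $X\in X_0\in\cdots\in X_n$ of coded $\beta$-models with $X_i\subseteq_\beta X_{i+1}$ --- and its verifier plays an entire coded model $X_i$ \emph{together with an explicit witness set} $W$, chosen so that ``$W$ witnesses that $X_i$ is a coded $\beta$-model'' is an arithmetical condition on the played reals; the refuter challenges single bits of the characteristic function of $X_i$, triggering the descent. You instead target the iterated-hyperjump formulation and certify each $\Pi^1_1$ fact ``$m\in\mathrm{HJ}(X_{i-1})$'' through a well-foundedness subgame (refuter plays a branch, verifier halts it with an oracle computation whose oracle answers can be challenged). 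Both games share the same skeleton --- role assignment, swaps on challenges, at most $n$ descents, a payoff that is a difference of open sets of length $O(n)$ uniformly in $n$, strategy-stealing to rule out a winning strategy for $\mathsf{I}$, and reading the answer set off $\mathsf{II}$'s strategy --- so your complexity bookkeeping is of the same kind as the paper's and should go through. The real trade-off is in the correctness step you flag at the end: with the paper's witness-set device, the winning condition itself arithmetically encodes that the constructed models are correct, so no induction beyond $\mathsf{ACA}_0$ is needed; in your version, correctness of $Z$ is the statement $\forall i<n.\,(Z)_{i+1}=\mathrm{HJ}((Z)_i)$, a Boolean combination of $\Pi^1_1$ facts, and your level-by-level argument (``were $\tau$ to misclassify some $m$ at level $i$\dots'') is an induction on a non-arithmetical formula, which bare $\mathsf{ACA}_0$ does not supply. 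This is fixable --- the hypothesis already yields $\Pi^1_1\text{-}\mathsf{CA}_0$ via $(\Sigma^0_1)_2$-$\mathsf{Det}$, hence induction for Boolean combinations of $\Pi^1_1$ formulas --- but you should say so explicitly, since it is the one place where your route genuinely needs more than the paper's.
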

\begin{proof}
    We first prove that $\Pi^1_1\text{-}\mathsf{CA}_0'$ is equivalent to  ``for all $X\subseteq \mathbb{N}$ and $n\in\mathbb{N}$, the $n^\mathrm{th}$ iterated hyperjump $\mathrm{HJ}^n(X)$ of $X$ exists''.
    The proof of this equivalence is essentially the proof of Theorem VII.2.9 of \cite{simpson2009sosoa}.
    Fix a sequence of coded $\beta$-models $X_0 \in X_1 \in \cdots \in X_n$ with $X\in X_0$. We can define $\mathrm{HJ}(X)$ arithmetically using $X_0$ as a parameter, so $\mathrm{HJ}(X)$ exists.
    Since $X_0\in X_1\models \mathsf{ACA}_0$.
    Again, we can define $\mathrm{HJ}^2(X)$ arithmetically using $X_0$ as a parameter.
    Repeating this process boundedly many times, we can define $\mathrm{HJ}^{n+1}(X)$.
    On the other hand, suppose the hyperjumps $\mathrm{HJ}(X), \dots, \mathrm{HJ}^n(X)$ exist.
    From $\mathrm{HJ}(X)$ we can define a $\beta$-model $X_0\ni X$. By bounded induction, given $X_i$ and $\mathrm{HJ}^{i+1}(X)$, we can define $X_{i+1}$ with $X_i\in X_{i+1}$ and $\mathrm{HJ}^i(X)\in X_{i+1}$.
    Therefore there is a sequence $X_0 \in \cdots \in X_n$ with $X\in X_0$.

    Suppose $\forall n.(\Sigma^0_1)_n\text{-}\mathsf{Det}$ holds and fix $n\in\mathbb{N}$ and $X\subseteq \mathbb{N}$. We prove the existence of the sequence $\tuple{\mathrm{HJ}(X), \dots, \mathrm{HJ}^n(X)}$.

    Similar to Proposition \ref{prop::sigma01det-n_implies_ACA0prime}, we claim that there is a set $\tilde{\tau}$ computing winning strategies for all $(\Sigma^0_1)_{k}$ games on Baire space with only $X$ as a parameter, for all $k\in\mathbb{N}$.
    Consider the following game: $\mathsf{I}$ chooses an index $e$ for the payoff of a $(\Sigma^0_1)_{k}$ game; $\mathsf{II}$ answers with their choice of role in the game with index $e$; then $\mathsf{I}$ and $\mathsf{II}$ play the game with index $e$; whoever wins the subgame wins the whole game.
    The payoff of this game has complexity $(\Sigma^0_1)_{2k}$.
    As we have $\forall n.(\Sigma^0_1)_n\text{-}\mathsf{Det}^*$, the game above is determined.
    Since $\mathsf{I}$ cannot win, $\mathsf{II}$ has a winning strategy $\tilde{\tau}$; this $\tilde{\tau}$ computes winning strategies for all $(\Sigma^0_1)_{k}$ games.

    Since we have $\forall n.(\Sigma^0_1)_n\text{-}\mathsf{Det}$, we also have $(\Sigma^0_1)_2\text{-}\mathsf{Det}$, and so $\Pi^1_1\text{-}\mathsf{CA}_0$ holds.
    Let $\mathcal{M}$ be a $\beta$-model including $X$ and a universal winning strategy $\tilde{\tau}$ for $(\Sigma^0_1)_{2^{n+2}}$ games with $X$ as the only parameter.
    The $\beta$-model $\mathcal{M}$ satisfies $\mathsf{ATR}_0$ and induction.

    We consider games similar to the ones in the proof of Proposition \ref{prop::sigma01det-n_implies_ACA0prime}.
    Player $\mathsf{I}$ starts by playing $\tuple{m,i}$ with $i\leq n$, asking whether $m\in \mathrm{HJ}^i(X)$.
    Then $\mathsf{II}$ plays 1 to answer `yes' and 0 to answer `no'.
    If $\mathrm{II}$ play 1, then they play the role of $\mathsf{V}$ (Verifier), otherwise they play the role of $\mathsf{R}$ (Refuter).

    If $i>0$, $\mathsf{V}$ must now play the characteristic function $\chi_{\mathrm{HJ}^{i-1}(X)}(m')$ of $\mathrm{HJ}^{i-1}(X)$ and a function $f$ witnessing that $m\in \mathrm{HJ}^{i}(X)$.
    While $\mathsf{V}$ is playing, $\mathsf{R}$ may contest $\mathsf{V}$'s choice of some $\chi_{\mathrm{HJ}^{i-1}(X)}(m')$.
    If $\mathsf{V}$ stated that $m'\not\in \mathrm{HJ}^{i-1}(X)$, then the players exchange roles; otherwise the roles stay the same.
    The players then proceed to discuss whether $m'\in \mathrm{HJ}^{i-1}(X)$.
    In case $\mathsf{R}$ never contests, $\mathsf{V}$ wins iff $\pi^0_1(m,f,\mathrm{HJ}^{i-1}(X))$ holds.
    If $i=0$, $\mathsf{V}$ wins iff $m\in X$.
    This game can be described by a boolean combination of $2^{n+2}$-many $\Sigma^0_1$-formulas, and so it is determined. (The complexity bound of the game need not be strict, as we assume $\forall n.(\Sigma^0_1)_n\text{-}\mathsf{Det}$.)

    We claim that $\mathsf{I}$ cannot have a winning strategy inside $\mathcal{M}$.
    As in Proposition \ref{prop::sigma01det-n_implies_ACA0prime}, $\mathsf{I}$ cannot have a winning strategy.
    Let $\tau$ be a winning strategy for $\mathsf{II}$.
    Define $Z = \{ \tuple{m,i} \mid \tau(\tuple{m,i}) = 1\}$.
    We can show that $(Z)_0=X$ and $(Z)_{i+1} = \mathrm{HJ}(\mathrm{HJ}^i(X))$ for $i<n$ by induction inside $\mathcal{M}$.
    As $\mathcal{M}$ is a $\beta$-model, the same holds outside of $\mathcal{M}$.
    We conclude that $\Pi^1_1\text{-}\mathsf{CA}_0'$ holds.
\end{proof}

MedSalem and Tanaka \cite{medsalem2008weak} proved that $(\Sigma^0_2)_k\text{-}\mathsf{Det}$ and $[\Sigma^1_1]^k\text{-}\mathsf{ID}$ are equivalent over $\mathsf{ATR}_0$, for $0<k<\omega$.
\begin{proposition}
    \label{prop::ind-k_and_determinacy-forall-ver}
    Over $\mathsf{ATR}_0$, $\forall n.(\Sigma^0_2)_n\text{-}\mathsf{Det}$ implies $\forall n.[\Sigma^1_1]^n\text{-}\mathsf{ID}$.
\end{proposition}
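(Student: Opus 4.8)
The plan is to reduce the proposition to a single uniform implication provable in $\mathsf{ATR}_0$. It suffices to establish, as \emph{one} $\mathcal{L}_2$-sentence,
\[
    \mathsf{ATR}_0 \vdash \forall n\,\big((\Sigma^0_2)_n\text{-}\mathsf{Det} \to [\Sigma^1_1]^n\text{-}\mathsf{ID}\big),
\]
since then the proposition follows immediately: work in a model of $\mathsf{ATR}_0 + \forall n.(\Sigma^0_2)_n\text{-}\mathsf{Det}$, fix an arbitrary internal $n$, discharge the antecedent using the hypothesis, and conclude $[\Sigma^1_1]^n\text{-}\mathsf{ID}$. MedSalem and Tanaka \cite{medsalem2008weak} already prove the level-by-level implication $(\Sigma^0_2)_k\text{-}\mathsf{Det} \to [\Sigma^1_1]^k\text{-}\mathsf{ID}$ over $\mathsf{ATR}_0$ for each standard $k$, and this is the substantive mathematical content. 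The only genuine work left to us is to check that their argument is \emph{uniform} in $k$, so that it passes from a scheme over standard $k$ to a single sentence quantifying over all $n$, including nonstandard ones inside a model.

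First I would isolate the uniformity of the game construction. Recall that from an instance of $[\Sigma^1_1]^n\text{-}\mathsf{ID}$, which we formalize as a length-$n$ sequence of $\Sigma^1_1$-operators coded by a single set parameter, one builds a Gale--Stewart game $G_n$ in which a verifier tries to certify membership in the combined simultaneous fixed point while a refuter tries to exhibit a genuine descent through the $n$ nested inductive definitions. The point I would stress is that the board of $G_n$ and its winning condition are given by a single formula carrying $n$ as a free number parameter: the refuter's attempted descent through the $n$ levels is recorded as a sequence of $\Sigma^0_2$ events, and ``the verifier wins'' asserts that the least level at which the descent breaks down has the correct parity. This is exactly the shape of a $(\Sigma^0_2)_x$-formula in the formalized difference hierarchy introduced above, so there is one $(\Sigma^0_2)_x$-formula $\varphi(x,f,\vec Y)$ describing $G_n$ simultaneously for every $n$.

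Next I would carry out the extraction, following the pattern of Propositions~\ref{prop::sigma01det-n_implies_ACA0prime} and~\ref{prop::sigma01det-n_implies_seq_of_beta-models}. Under $\forall n.(\Sigma^0_2)_n\text{-}\mathsf{Det}$ each $G_n$ is determined; as in the refuter-always-defeats-a-winning-$\mathsf{I}$-strategy arguments of those propositions, one shows the refuter cannot win, so the verifier has a winning strategy, and from it one reads off the combined inductively defined set by transfinite recursion along the stages of the definition, which is available in $\mathsf{ATR}_0$. The essential observation is that every induction invoked here is a transfinite recursion on an internal stage/ordinal parameter of fixed arithmetical complexity; none of it is external induction on the standard integer $k$. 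Hence the entire derivation is a single $\mathsf{ATR}_0$-proof with $n$ free, which yields the displayed uniform implication and therefore $\forall n.[\Sigma^1_1]^n\text{-}\mathsf{ID}$.

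The hard part will be the complexity bookkeeping of the second step: I must verify that, with the least-witness/parity coding of the formalized difference hierarchy from the preliminaries, the winning condition of $G_n$ genuinely lands in $(\Sigma^0_2)_n$ \emph{uniformly} in $n$, with the $n$-fold nesting of the operators matching the parity coding of that hierarchy. I would also need to confirm that $[\Sigma^1_1]^n\text{-}\mathsf{ID}$ is formalized so as to make sense for nonstandard $n$ — namely as the existence of a single set coding the simultaneous fixed point of the length-$n$ operator sequence — and that the read-off step respects this coding. Once the uniformity of both the payoff complexity and the extraction is confirmed, no further induction on $n$ is required and the argument closes.
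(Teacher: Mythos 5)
There is a genuine gap, and it sits exactly where you locate the ``only genuine work left'': the claim that the MedSalem--Tanaka argument is uniform in $k$ and that ``none of it is external induction on the standard integer $k$.'' This is false as stated, and dealing with that failure is the entire content of the paper's proof. MedSalem and Tanaka obtain $[\Sigma^1_1]^k\text{-}\mathsf{ID}$ from $(\Sigma^0_2)_k\text{-}\mathsf{Det}$ by building $V_k$ from $V_{k-1}$, so their derivation is an induction on $k$ whose induction hypothesis is the \emph{existence} of the previous inductively defined set --- a $\Sigma^1_2$-type statement --- and this induction (the paper cites the need for $\Pi^1_3\text{-}\mathsf{Ind}$) is not available in $\mathsf{ATR}_0$, nor even in $\Pi^1_2\text{-}\mathsf{CA}_0$ with restricted induction, for an internal and possibly nonstandard $k$. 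Your proposal simply asserts that the construction collapses to a single $(\Sigma^0_2)_x$-formula with $n$ free and that the extraction is a transfinite recursion of fixed arithmetical complexity available in $\mathsf{ATR}_0$; neither assertion is justified, and the second is independently doubtful since the stages of a $\Sigma^1_1$-inductive definition are not an arithmetical recursion along a pre-given well-ordering.

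The paper's proof shows what is actually needed to close this gap. First, one \emph{unfolds} the MedSalem--Tanaka induction into a single game of complexity $(\Sigma^0_2)_{k^3}$, so that the external induction on $k$ is traded for a jump in the level of the difference hierarchy (this is where $\forall n.(\Sigma^0_2)_n\text{-}\mathsf{Det}$, rather than each fixed level, is really used). Second, even the unfolded argument needs more induction than $\mathsf{ATR}_0$ supplies, so one builds a master strategy $\tilde\tau$ deciding all $(\Sigma^0_2)_{k^3}$ games with parameter $X$ (via an auxiliary $(\Sigma^0_2)_{2k^3}$ game), places $X$ and $\tilde\tau$ in a coded $\beta$-model $\mathcal{M}$ --- which, being coded, satisfies full induction, in particular $\Pi^1_3\text{-}\mathsf{Ind}$ --- runs the unfolded proof inside $\mathcal{M}$, and finally pulls $V_k$ back to the ground model using the fact that ``$V_k$ is the set inductively defined by $\tuple{\Gamma_0,\dots,\Gamma_{k-1}}$'' is a Boolean combination of $\Pi^1_1$ statements and hence absolute between $\mathcal{M}$ and $\groundmodel$. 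None of these three moves appears in your proposal, and without them the argument does not go through.
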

\begin{proof}
    Fix $k\in\mathbb{N}$ and $X\subseteq \mathbb{N}$.
    Suppose $\forall n.(\Sigma^0_2)_n\text{-}\mathsf{Det}$ is true.
    Let $\tuple{\Gamma_0,\dots, \Gamma_{n-1}}$ be a sequence of (indices of) $\Sigma^1_1$-inductive operators.
    We show that the set $V_n$ inductively defined by $\tuple{\Gamma_0,\dots, \Gamma_{n-1}}$ exists.

    As in Propositions \ref{prop::sigma01det-n_implies_ACA0prime} and \ref{prop::sigma01det-n_implies_seq_of_beta-models}, we claim that there is a set $\tilde{\tau}$ computing universal winning strategies for all $(\Sigma^0_2)_{k}$ games with only $X$ as a parameter, for all $k\in\mathbb{N}$.
    Consider the following game: $\mathsf{I}$ chooses an index $e$ for the payoff of a $(\Sigma^0_2)_{k}$ game; $\mathsf{II}$ answers with their choice of role in the game with index $e$; then $\mathsf{I}$ and $\mathsf{II}$ play the game with index $e$; whoever wins the subgame wins the whole game.
    The payoff of this game has complexity $(\Sigma^0_2)_{2k}$.
    As we have $\forall n.(\Sigma^0_2)_n\text{-}\mathsf{Det}$, the game above is determined.
    Since $\mathsf{I}$ cannot win, $\mathsf{II}$ has a winning strategy $\tilde{\tau}$; this $\tilde{\tau}$ computes winning strategies for all $(\Sigma^0_2)_{k}$ games.

    Since we have $\forall n.(\Sigma^0_2)_n\text{-}\mathsf{Det}$, we also have $(\Sigma^0_1)_2\text{-}\mathsf{Det}$, and so $\Pi^1_1\text{-}\mathsf{CA}_0$ is true.
    Let $\mathcal{M}$ be a $\beta$-model including $X$ and a universal winning strategy $\tilde{\tau}$ for $(\Sigma^0_2)_{{n}^3}$ games.
    The $\beta$-model $\mathcal{M}$ satisfies $\mathsf{ATR}_0$ and induction.

    We use an unfolded version of a proof by MedSalem and Tanaka \cite{medsalem2008weak}*{Theorem 3.3} to show that the set $V_{n}$ inductively defined by $\tuple{\Gamma_0,\dots, \Gamma_{n-1}}$ exists inside $\mathcal{M}$.
    They prove that, for all $n\in\omega$, $[\Sigma^1_1]^n\text{-}\mathsf{ID}$ follows from $(\Sigma^0_2)_n\text{-}\mathsf{Det}$ using $\mathsf{ATR}_0$ and induction on $n$.
    We sketch how to unfold their proof to show that $[\Sigma^{1,X}_1]^{n}\text{-}\mathsf{ID}$ follows from $(\Sigma^{0,X}_2)_{{n}^3}\text{-}\mathsf{Det}$.
    Here, $\Sigma^{i,X}_j$ denotes the set of $\Sigma^i_j$ formulas whose only set parameter is $X$.
    Let $V_i$ be the set inductively defined by $\tuple{\Gamma_0, \dots, \Gamma_{i-1}}$, for $i= 1, \dots n$.
    To show the existence of the set $V_{n}$ , we use the set $V_{{n}-1}$ and a $(\Sigma^0_2)_{n}$ game.
    Unfolding the definitions of $V_{{n}-1}$, we can show the existence of $V_{n}$ using $V_{{n}-2}$ and a $(\Sigma^0_2)_{{n}-1}\land (\Sigma^0_2)_{n}$ game.
    Repeatedly unfolding the $V_i$, we an prove the existence of $V_{n}$ using a $\Sigma^0_2 \land (\Sigma^0_2)_2 \land \cdots \land (\Sigma^0_2)_{n}$ game.
    Furthermore, we can show that the payoff of this game is $(\Sigma^0_2)_{{n}^3}$. (The complexity bound of the game need not be strict, as we assume $\forall n.(\Sigma^0_2)_n\text{-}\mathsf{Det}$.)

    Furthermore, the statement ``$V_{n}$ is the set inductively defined by $\tuple{\Gamma_0, \dots, \Gamma_{n-1}}$'' is a boolean combination of $\Pi^1_1$-sentences with $X, V_{n}$ as the only parameters.
    As $\mathcal{M}$ is a $\beta$-models, if $V_{n}$ is the set inductively defined by $\tuple{\Gamma_0, \dots, \Gamma_{n-1}}$ inside $\mathcal{M}$, then $V_{n}$ is also the set inductively defined by $\tuple{\Gamma_0,\dots, \Gamma_{n-1}}$ outside $\mathcal{M}$, as we wanted to show.
    Since the argument above holds for any $k\in\mathbb{N}$, $X\subseteq\mathbb{N}$ and $\tuple{\Gamma_0,\dots, \Gamma_{n-1}}$, we have that $\forall n.[\Sigma^1_1]^n\text{-}\mathsf{ID}$ holds.
\end{proof}
The reversals for Propositions \ref{prop::sigma01det-n_implies_seq_of_beta-models} and \ref{prop::ind-k_and_determinacy-forall-ver} will be proved below.

\section{Sequences of $\beta_k$ models and reflection}\label{section::sequences_of_beta_models}
We define now a formula $\psi_e(i,n)$ which states that there are sequences with length $n$ of increasing coded models which are $\beta_i$-submodels of each other and where the last model is a $\beta_e$-submodel of the ground model $\groundmodel$:
\begin{align*}
     X \in \;&Y_0 \; \in  \;\;\cdots \; \in \;\;\; Y_n, \\
           &Y_0 \subseteq_{\beta_i}\cdots \subseteq_{\beta_i} Y_n \subseteq_{\beta_e} \groundmodel.
\end{align*}
For each $e\in \omega$, we define $\psi_e(i,n)$ by
\[
    \forall X \exists Y_0, \dots, Y_n \forall k\leq n.\left \{
    \begin{array}{l}
        X\in Y_0 \;\land\\
        Y_k\in Y_{k+1} \;\land\\
        Y_k \models \mathsf{ACA}_0 \;\land\\
        Y_k \subseteq_{\beta_i} Y_{k+1} \;\land\\
        Y_n \subseteq_{\beta_e} \groundmodel.
    \end{array}\right.
\]
Note that each $\psi_e(i,n)$ is a $\Pi^1_{e+2}$-formula.
The following is an immediate consequence of our definition: $\psi_e(i,n)$ is downwards closed, {\em i.e.}, if $\psi_e(i,n)$ holds and $n'\leq n, i'\leq i, e'\leq e$ then $\psi_{e'}(i',n')$ also holds.

In this section, we prove:
\begin{theorem}
    \label{thm::psi_and_ref}
    Over $\mathsf{ACA}_0$, if $e\leq i$ then $\forall n.\psi_e(i,n)$ is equivalent to $\Pi^1_{e+2}$-$\refl(\sigmaoneisdc)$.
\end{theorem}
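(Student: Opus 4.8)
The plan is to prove the two implications separately, using throughout that $\psi_e(i,n)$ is $\Pi^1_{e+2}$ and that, because $e\le i$, every coded $\beta_i$-model is automatically a $\beta_e$-submodel of $\groundmodel$.

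For the implication from reflection to $\forall n.\psi_e(i,n)$, I would first argue that $T:=\sigmaoneisdc$ proves $\forall n.\psi_e(i,n)$ outright. Using the Strong $\Sigma^1_i$-$\mathsf{DC}$ scheme one builds, above any given $X$, an $\in$-increasing sequence $\langle Y_k\rangle$ of coded $\beta_i$-models with $Y_k\subseteq_{\beta_i}Y_{k+1}$; each $Y_k$ is a model of $\mathsf{ACA}_0$, and since $e\le i$ each is also a $\beta_e$-submodel of $\groundmodel$. Every initial segment of length $n$ then witnesses $\psi_e(i,n)$, so $T\vdash\forall n.\psi_e(i,n)$. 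As $\forall n.\psi_e(i,n)$ is $\Pi^1_{e+2}$ (prefixing a number quantifier does not raise the level), $\Pi^1_{e+2}$-$\refl(T)$ gives that it is true. If one prefers to avoid the infinite sequence, the same conclusion follows from uniform reflection: there is a primitive recursive $f$ with $\mathsf{ACA}_0\vdash\forall x.\bew{T}{\psi_e(i,\dot x)}$, where $f(x)$ iterates the single-$\beta_i$-model axiom $x$ times, after which one applies $\Pi^1_{e+2}$-$\mathsf{RFN}(T)$ together with its equivalence to $\refl$ established above.

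For the converse, assume $\forall n.\psi_e(i,n)$, let $\varphi=\forall Y\exists Z\,\theta(Y,Z)$ with $\theta\in\Pi^1_e$ be a $\Pi^1_{e+2}$-sentence, and suppose $\bew{T}{\varphi}$. Working in $\groundmodel$, I fix an arbitrary set $Y$ and a number $n$ bounding the code of a $T$-proof of $\varphi$ (possibly of nonstandard length---this is precisely why the full $\forall n$ version, including nonstandard $n$, is needed). Instantiating $\psi_e(i,n)$ at $X=Y$ yields a chain $Y\in Y_0\in\cdots\in Y_n$ of $\beta_i$-models with $Y_k\subseteq_{\beta_i}Y_{k+1}$ and $Y_n\subseteq_{\beta_e}\groundmodel$; since $Y\in Y_0$ and $Y_k\subseteq_{\beta_i}Y_{k+1}$ forces $Y_k\subseteq Y_{k+1}$, membership propagates upward and $Y\in Y_n$.

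The technical heart, and the step I expect to be the main obstacle, is to show $Y_n\models\varphi$. The idea is that a chain of length $n$ validates the at most $n$ instances of the Strong $\Sigma^1_i$-$\mathsf{DC}$ scheme used in the proof: each ``there is a $\beta_i$-model above this set'' demand is answered by stepping one level up the chain, the previously constructed objects lying in some $Y_k\in Y_{k+1}$. Making this precise requires a partial satisfaction predicate for the boundedly many, bounded-complexity formulas occurring in the proof---available already in $\mathsf{ACA}_0$---together with an induction along the proof showing that every line holds in the appropriate model of the chain, so that the conclusion $\varphi$ holds in $Y_n$; the delicate point is matching the potentially branching model-demands of the proof to the single linear chain. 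Granting $Y_n\models\varphi$, and since $Y\in Y_n$, we obtain $Y_n\models\exists Z\,\theta(Y,Z)$, so I fix $Z\in Y_n$ with $Y_n\models\theta(Y,Z)$. As $\theta\in\Pi^1_e$ and $Y_n\subseteq_{\beta_e}\groundmodel$, $\theta(Y,Z)$ is absolute, whence $\groundmodel\models\theta(Y,Z)$ and $\groundmodel\models\exists Z\,\theta(Y,Z)$. Since $Y$ was arbitrary, $\groundmodel\models\varphi$, i.e.\ $\varphi$ is true. Here the hypothesis $e\le i$ enters twice: it guarantees that the $\beta_i$-models of the chain are $\beta_e$-correct over $\groundmodel$, matching the reflection level $e+2$, and it is what lets the inner $\Pi^1_e$ matrix transfer downward from $Y_n$.
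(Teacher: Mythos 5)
Your left-to-right direction contains an outright error in its primary route: $\sigmaoneisdc$ does \emph{not} prove $\forall n.\psi_e(i,n)$. If it did, then by the converse half of the theorem it would prove its own $\Pi^1_{e+2}$-reflection, hence its own consistency, contradicting G\"odel's second incompleteness theorem; concretely, for $e=i=1$ the theory is $\Pi^1_1$-$\mathsf{CA}_0$ while $\forall n.\psi_1(1,n)$ is $\Pi^1_1$-$\mathsf{CA}_0'$, which is strictly stronger. The obstruction is exactly the induction you try to wave away: building the chain of length $n$ requires induction on a $\Pi^1_{e+2}$ statement, which $\sigmaoneisdc$ lacks, and Strong $\Sigma^1_i$-$\mathsf{DC}$ does not hand you an infinite $\in$-increasing chain of $\beta_i$-models in one shot. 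Your fallback, however, is correct and is precisely what the paper does in Lemma \ref{lem::refl_sigmaoneisdc_proves_psi}: one shows $\bew{T}{\psi_i(i,0)}$ and $\bew{T}{\forall n.\psi_i(i,n)\to\psi_i(i,n+1)}$, concludes $\forall x.\bew{T}{\psi_e(i,\dot x)}$ in $\mathsf{ACA}_0$ by $\Sigma^0_1$-induction on proof codes, and then applies uniform reflection together with downward closure of $\psi$. Keep only the fallback and discard the first argument.

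In the converse direction, the step you yourself flag as the ``technical heart'' is a genuine gap, and the repair you sketch does not work as stated. A chain $Y_0\in\cdots\in Y_n$ gives you no model of $T=\sigmaoneisdc$: the union of the chain is just $Y_n$ (since $Y_k\subseteq Y_{k+1}$), and $Y_n$ is not closed under ``for every $X$ there is a coded $\beta_i$-model containing $X$'', so $\bew{T}{\varphi}$ yields no direct handle on $Y_n\models\varphi$. An ``induction along the proof, matching scheme instances to chain levels'' would at minimum require normalizing the proof so that its lines admit a stratified semantic reading (after which the length of the original proof no longer bounds the chain length needed), and none of this machinery is supplied. The paper's Lemma \ref{lem::psi_proves_refl_sigmaoneisdc} sidesteps the problem with a compactness argument: assuming $\bew{T}{\forall X.\theta(X)}$ and $\neg\theta(X_0)$, it writes down a first-order theory describing an $\mathbb{N}$-indexed chain of coded $\beta_i$-models over $X_0$, each ``$\beta_e$-submodel enough'' of $\groundmodel$; $\forall n.\psi_e(i,n)$ makes every finite subtheory consistent, compactness produces a coded (possibly non-standard) model, and the union $\mathcal{M}_\omega$ over all levels \emph{is} a model of $T$ satisfying $\neg\theta(X_0)$, contradicting arithmetized soundness. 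That single idea --- converting arbitrarily long finite chains into one infinite chain via compactness, at the cost of a nonstandard first-order part --- is what your proposal is missing. (Your endgame, pulling the $\Pi^1_e$ matrix down through $\subseteq_{\beta_e}$, is the right final move and reappears in the paper as the ``$\beta_e$-submodel enough'' clause; note also that the paper runs this lemma over $\Pi^1_{e+1}$-$\mathsf{CA}_0$ rather than bare $\mathsf{ACA}_0$, since deciding which axioms of the form ``a witness for $\pi^1_e(e_0,X_0,\cdot)$ exists'' to include requires comprehension beyond $\mathsf{ACA}_0$.)
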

\noindent We justify our choice of variables by noting that $i$ stands for `internal' and $e$ for `external', on both sides of the theorem above.

\begin{lemma}\label{lem::refl_sigmaoneisdc_proves_psi}
    Over $\mathsf{ACA}_0$, $\Pi^1_{e+2}$-$\refl(\sigmaoneisdc)$ proves $\forall n.\psi_e(i,n)$ when $e\leq i$.
\end{lemma}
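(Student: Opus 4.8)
The plan is to push all of the set-existence content into a \emph{uniform provability} statement about $\sigmaoneisdc$ and then invoke reflection. Fixing the standard parameters $e\le i$, I would first note that, by the equivalence of $\Pi^1_{e+2}\text{-}\refl$ with the uniform reflection scheme $\Pi^1_{e+2}\text{-}\mathsf{RFN}$ (the Proposition above), it suffices to establish
\[
    \mathsf{ACA}_0\vdash\forall n.\,\bew{\sigmaoneisdc}{\psi_e(i,\dot n)}.
\]
Indeed, $\psi_e(i,\cdot)$ is a standard $\Pi^1_{e+2}$-formula with free number variable $n$, so its instance of $\Pi^1_{e+2}\text{-}\mathsf{RFN}(\sigmaoneisdc)$ reads $\forall n.(\bew{\sigmaoneisdc}{\psi_e(i,\dot n)}\to\psi_e(i,n))$, and combining this with the displayed fact yields $\forall n.\psi_e(i,n)$.

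The heart of the argument is therefore to produce, primitive-recursively in $n$, a $\sigmaoneisdc$-proof of $\psi_e(i,\dot n)$, and to check within $\mathsf{ACA}_0$ that the construction is correct for every $n$. The mathematics behind each such proof is a finite iteration of the single-model form of $\sigmaoneisdc$: recall that for $i\ge 1$, $\sigmaoneisdc$ proves that every set is contained in a coded $\beta_i$-model of the ambient universe $\groundmodel$. Working inside $\sigmaoneisdc$ and fixing $X$, I would apply this first to $X$, obtaining $Y_0\ni X$, then to $Y_0$, obtaining $Y_1\ni Y_0$, and so on, producing $Y_0\in Y_1\in\cdots\in Y_n$ where each $Y_k$ is a $\beta_i$-submodel of $\groundmodel$.

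It then remains to read off the clauses of $\psi_e(i,n)$ from this chain. Since $i\ge 1$ and the $\mathsf{ACA}_0$-axiom ``$\mathrm{TJ}(Z)$ exists'' is $\Sigma^1_1$, it is absolute to each $\beta_i$-submodel of $\groundmodel$, so each $Y_k\models\mathsf{ACA}_0$; this in turn makes each $Y_k$ a genuine submodel of $Y_{k+1}$, since $Y_k\in Y_{k+1}$ and $Y_{k+1}$ is closed under the relevant arithmetical operations. For any $\Sigma^1_i$-formula $\varphi$ with parameters in $Y_k$ we then have $Y_k\models\varphi\iff\varphi\iff Y_{k+1}\models\varphi$, using that both $Y_k$ and $Y_{k+1}$ are $\beta_i$-submodels of $\groundmodel$; hence $Y_k\subseteq_{\beta_i}Y_{k+1}$. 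Finally $Y_n\subseteq_{\beta_i}\groundmodel$ gives $Y_n\subseteq_{\beta_e}\groundmodel$ because $e\le i$. For each standard $n$ this is an honest $\sigmaoneisdc$-proof whose shape depends primitive-recursively on $n$, which is exactly what the displayed provability statement records.

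The step I expect to be the main obstacle is the formalized uniform-provability bookkeeping rather than the model theory: one must define the proof-building function $n\mapsto\ulcorner\text{proof of }\psi_e(i,\dot n)\urcorner$, handle the numeral substitution $\dot n$ and the interpretation of $\groundmodel$ inside $\sigmaoneisdc$ (as the ambient universe) correctly, and verify $\mathsf{ACA}_0\vdash\forall n.\bew{\sigmaoneisdc}{\psi_e(i,\dot n)}$ uniformly. It is crucial here \emph{not} to attempt to prove $\forall n.\psi_e(i,n)$ inside $\sigmaoneisdc$ itself: already for $i=1$ that statement is essentially $\Pi^1_1\text{-}\mathsf{CA}_0'$, which is strictly stronger than $\sigmaoneisdc=\Pi^1_1\text{-}\mathsf{CA}_0$. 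Reflection is precisely what supplies this extra strength once the per-instance provability is in hand.
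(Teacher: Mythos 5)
Your proposal is correct and follows essentially the same route as the paper: show that each instance $\psi_e(i,\dot n)$ is provable in $\sigmaoneisdc$, uniformly and verifiably in $\mathsf{ACA}_0$, and then apply reflection instance by instance. The paper packages the uniform provability slightly differently --- it proves in $\sigmaoneisdc$ the base case and a single induction step $\forall n(\psi_i(i,n)\to\psi_i(i,n+1))$, derives $\forall n.\bew{\sigmaoneisdc}{\psi_i(i,n)}$ by $\Sigma^0_1$-induction on proofs, and passes from $e=i$ to $e\le i$ via the downward closure of $\psi$ --- but this is only a cosmetic reorganization of your primitive-recursive proof construction.
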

\begin{proof}
    In this proof we denote $\sigmaoneisdc$ by $T_i$.
    We have $\bew{T_i}{\psi_i(i,0)}$ and $\bew{T_i}{\forall n.\psi_i(i,n) \rightarrow \psi_i(i,n+1)}$.
    So $\bew{T_i}{\psi_i(i,n)}$ holds for each $n\in \mathbb{N}$.
    By reflection, $\psi_i(i,n)$ holds for any $n\in\mathbb{N}$.
    Thus $\forall n.\psi_i(i,n)$ holds.
    As $\psi$ is downwards closed, $\forall n.\psi_e(i,n)$ holds when $e\leq i$.
\end{proof}

\begin{lemma}\label{lem::psi_proves_refl_sigmaoneisdc}
    Over $\mathsf{ACA}_0$, $\forall n.\psi_e(i,n)$ proves $\Pi^1_{e+2}$-$\refl(\sigmaoneisdc)$ when $e\leq i$.
\end{lemma}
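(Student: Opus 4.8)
The plan is to prove the equivalent uniform reflection directly. Working over $\Pi^1_{e+1}\text{-}\mathsf{CA}_0 + \forall n.\psi_e(i,n)$, write $T_i$ for $\sigmaoneisdc$, take a $\Pi^1_{e+2}$-sentence $\varphi$ with $\bew{T_i}{\varphi}$, and aim at $\mathrm{Tr}_{\Pi^1_{e+2}}(\ulcorner\varphi\urcorner)$. By the normal form theorem I may assume $\varphi$ is $\forall Z\,\exists W\,\theta(Z,W)$ with $\theta\in\Pi^1_e$, and since $\Pi^1_{e+1}\text{-}\mathsf{CA}_0$ furnishes truth predicates for $\Sigma^1_{e+1}$- and $\Pi^1_e$-formulas, it suffices to fix an arbitrary set $Z$ and produce a witness $W$ for which $\theta(Z,W)$ is certified true by the truth predicate. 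The overall strategy is the standard reflection-by-model-existence argument: build a coded $\omega$-model $\mathcal{M}$ with $Z\in\mathcal{M}$, $\mathcal{M}\subseteq_{\beta_e}\groundmodel$, satisfying the finitely many axioms of $T_i$ actually used in the given proof; then internal soundness gives $\mathcal{M}\models\varphi$, hence $\mathcal{M}\models\exists W\,\theta(Z,W)$, and $\beta_e$-absoluteness of the $\Pi^1_e$-matrix transfers the witness to $\groundmodel$.

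To build $\mathcal{M}$ from the chain, let $p$ be a proof witnessing $\bew{T_i}{\varphi}$ and let $n$ be a bound, definable from $p$, on the number of nested instances of the Strong $\Sigma^1_i$-$\mathsf{DC}$ scheme occurring in $p$. Applying $\forall n.\psi_e(i,n)$ to $X = Z\oplus p$ yields a chain $Y_0\in\cdots\in Y_n$ with $Z,p\in Y_0$, each $Y_k\models\mathsf{ACA}_0$, $Y_k\subseteq_{\beta_i}Y_{k+1}$, and $Y_n\subseteq_{\beta_e}\groundmodel$. The key internal observation is that for each $k<n$ the ambient model $Y_{k+1}$ verifies that $Y_k$ is a coded $\beta_i$-model: since $Y_{k+1}\models\mathsf{ACA}_0$ it computes the satisfaction relation of the coded model $Y_k$ correctly, and $Y_k\subseteq_{\beta_i}Y_{k+1}$ says exactly that $\Sigma^1_i$-truth of formulas with parameters from $Y_k$ agrees between $Y_k$ and $Y_{k+1}$. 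Iterating, inside $Y_n$ one finds the descending tower of $\beta_i$-models $Y_{n-1}\ni\cdots\ni Y_0\ni Z$. Because $T_i$ is equivalent to ``every set lies in a coded $\beta_i$-model'', each rung of this tower can witness the corresponding DC-instance, so setting $\mathcal{M}=Y_n$ and taking $n$ above the DC-depth of $p$ should make $\mathcal{M}$ satisfy every axiom-instance appearing in $p$. Finally $Y_0\subseteq_{\beta_i}\cdots\subseteq_{\beta_i}Y_n\subseteq_{\beta_e}\groundmodel$ together with $e\le i$ gives $\mathcal{M}\subseteq_{\beta_e}\groundmodel$ and $Z\in\mathcal{M}$.

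For soundness and transfer: internal soundness for coded $\omega$-models, namely that $\mathcal{M}$ satisfies the axioms used in $p$ and $p$ derives $\varphi$, hence $\mathcal{M}\models\varphi$, is provable in $\mathsf{ACA}_0$ by arithmetical induction along the (possibly nonstandard) length of $p$, using that the satisfaction relation of the coded model $\mathcal{M}$ is arithmetical in $\mathcal{M}$. Instantiating the leading universal quantifier of $\varphi$ at $Z\in\mathcal{M}$ produces $W\in\mathcal{M}$ with $\mathcal{M}\models\theta(Z,W)$; as $\theta$ is $\Pi^1_e$, $Z,W\in\mathcal{M}$, and $\mathcal{M}\subseteq_{\beta_e}\groundmodel$, the $\Pi^1_e$-truth predicate certifies $\theta(Z,W)$ in $\groundmodel$, so $\exists W\,\theta(Z,W)$ is true. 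Since $Z$ was arbitrary, $\varphi$ is true, which is $\Pi^1_{e+2}$-$\refl(\sigmaoneisdc)$.

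The hard part will be the construction of the second paragraph: turning a \emph{finite} chain into a coded model that genuinely satisfies all DC-instances of $p$. Note that $Y_n$ alone is not a model of all of $\sigmaoneisdc$, since sets near the top of $Y_n$ need not have a $\beta_i$-model above them, so one cannot argue $\mathcal{M}\models T_i$ outright (and indeed $\Pi^1_{e+1}\text{-}\mathsf{CA}_0$ need not prove the existence of models of $T_i$ when $e<i$). Instead one must track the finitely many DC-instances actually used in $p$, match each to a rung of the internal tower lying above its parameters, and check that the induced DC-choice sets land inside $\mathcal{M}$. Making the bound $n$ an explicit, provably correct function of $p$ that remains well defined for nonstandard $p$, and verifying that the parameters generated along $p$ stay within the tower, is where the genuine work lies; the soundness-plus-absoluteness packaging around it is routine.
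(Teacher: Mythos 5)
Your overall packaging (soundness for coded $\omega$-models plus $\beta_e$-absoluteness of the matrix) is fine, but the core of your argument has a genuine gap, and you have correctly located it yourself without filling it. The problem is that ``the number of nested instances of the Strong $\Sigma^1_i$-$\mathsf{DC}$ scheme occurring in $p$'' does not control what you actually need. The axioms of $\sigmaoneisdc$ used in $p$ are universally closed scheme instances; for $\mathcal{M}=Y_n$ to satisfy even one of them, \emph{every} tuple of parameters in $Y_n$ must have its DC-witness $Z$ inside $Y_n$. Sets of $Y_n$ that do not already lie in some $Y_k$ with $k<n$ have no rung of the tower above them, so there is no reason for such a witness to exist in $Y_n$. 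A Hilbert-style proof does not come equipped with a semantic record of ``which sets the scheme is applied to'': the objects to which an axiom is applied are values of bound variables, determined only relative to a model, so ``matching each DC-instance to a rung lying above its parameters'' is not a syntactic operation on $p$. Turning this into a rigorous argument would require a free-cut-free or witnessing-style analysis of $p$ that is itself formalizable for nonstandard $p$ over $\Pi^1_{e+1}\text{-}\mathsf{CA}_0$; that is not routine, and as stated your bound $n$ need not suffice.

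The paper avoids this entirely by a compactness argument rather than a truncation argument. Assuming a counterexample $\neg\theta(X_0)$ to a provable $\Pi^1_{e+2}$-sentence, one writes down a first-order theory $T$ describing an $\omega$-indexed chain $\mathcal{M}_0\subseteq_{\beta_i}\mathcal{M}_1\subseteq_{\beta_i}\cdots$ of coded models of $\mathsf{ACA}_0$ with $X_0\in\mathcal{M}_0$, each ``$\beta_e$-submodel enough'' of $\groundmodel$ with respect to $X_0$; every finite fragment of $T$ is satisfied by a chain supplied by $\psi_e(i,j_0)$, so compactness yields a (possibly nonstandard) model whose union $\mathcal{M}_\omega$ \emph{genuinely} satisfies all of $\sigmaoneisdc$, because every set in $\mathcal{M}_\omega$ lies in some $\mathcal{M}_j$ and hence in the coded $\beta_i$-model $\mathcal{M}_{j+1}\in\mathcal{M}_\omega$. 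No analysis of the proof $p$ is needed: full soundness gives $\mathcal{M}_\omega\models\forall X.\theta(X)$ while the $\beta_e$-enough condition forces $\mathcal{M}_\omega\models\neg\theta(X_0)$, a contradiction. If you want to salvage your direct approach, the compactness step is the missing ingredient: it is precisely what converts ``arbitrarily long finite chains'' into a single structure that is closed under passing to a higher rung, which is what satisfaction of the full scheme requires.
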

\begin{proof}
    Assume that $\Pi^1_{e+2}$-$\refl(\sigmaoneisdc)$ is false.
    That is, there is an $\mathcal{L}_2$-formula $\theta(X)\in \Sigma^1_{e+1}$ such that $\bew{\sigmaoneisdc}{\forall X.\theta(X)}$ holds but $\forall X.\theta(X)$ is false.
    Therefore, there is $X_0$ such that $\neg\theta(X_0)$ holds.

    Let $\mathcal{L}_1'$ be the language $\mathcal{L}_1$ of first-order arithmetic plus a unary predicate symbol $A$.
    Denote the domain of an $\mathcal{L}_1'$ structure by $M$.
    In the definition below, $\mathcal{M}_j$ is $(M,\{(A_j)_i i\in M\})$, for $j\in \mathbb{N}$.
    Define the $\mathcal{L}_1'$-theory $T$ by the following axioms:
    \begin{enumerate}
        \item $M$ is a discrete ordered semiring;

        \item $\mathcal{M}_j \models \mathsf{ACA}_0$ for all $j\in \mathbb{N}$;

        \item $(\mathcal{M}_{j+1})_0 = \mathcal{M}_j$, formally:
        \[
            \forall m .m\in M_j \leftrightarrow (M_{j+1})_0;
        \]


        \item $\mathcal{M}_j \subseteq_{\beta_i} \mathcal{M}_{j+1}$ for all $j\in \mathbb{N}$:
        \[
            \forall e_0 \forall s.(\exists m.m\in (M_j)_s) \to (\mathcal{M}_j\models \pi^1_i(e_0,(M_j)_s) \leftrightarrow \mathcal{M}_{j+1}\models \pi^1_i(e_0,(M_j)_s)),
        \]
        where $\pi^1_i$ is a universal lightface $\Pi^1_i$-formula;

        \item $\mathcal{M}_j$ satisfies $\neg\theta(X_0)$; and

        \item $X_0 = (\mathcal{M}_0)_0$, that is, for all $n\in\mathbb{N}$, $n\in X_0$ if and only if $x\in (\mathcal{M}_0)_0$.
    \end{enumerate}

    Fix a finite subtheory $T'$ of $T$.
    Let $j_0$ be the greatest index $j$ of a coded model $\mathcal{M}_j$ occurring in formulas of $T'$.
    $\psi_e(i,j_0)$ implies that there is a sequence
    \[
        X_0\in Y_0 \subseteq_{\beta_i} \cdots \subseteq_{\beta_i} Y_{j_0} \subseteq_{\beta_e} \groundmodel
    \]
    of $j_0$ many coded models.
    Setting $A_j = Y_j$ for $j\leq j_0$ and $A_j = \emptyset$ for $j>j_0$, we have a model witnessing the consistency of $T'$.
    By compactness, $T$ is also consistent, so there is a model $\mathcal{M}=(M,A)$ of $T$.

    Now consider the model $\mathcal{M}_\omega = (M, \{(A_j)_i \mid i\in M, j\in\mathbb{N}\})$.
    $\mathcal{M}_\omega$ satisfies $\sigmaoneisdc$ as it is closed under taking $\beta_i$ models: if $Y\in \mathcal{M}_\omega$ then $Y$ is in some $\beta_i$-model $\mathcal{M}_j$ which is also in $\mathcal{M}_\omega$.
    So $\mathcal{M}_\omega$ is a model of $\theta(X_0)$.
    On the other hand, each $\mathcal{M}_j$ is a $\beta_i$-submodel of $\mathcal{M}_\omega$ since $\mathcal{M}_j \subseteq_{\beta_i} \mathcal{M}_{j+1}$ for all $j\in \mathbb{N}$.
    As $\neg\theta(X_0)$ is $\Pi^1_{e+1}$, it can be written as $\forall Y \theta'(X_0,Y)$ with $\theta'\in\Sigma^1_e$.
    Let $Y_0\in \mathcal{M}_\omega$, then there is $j\in\mathbb{N}$ such that $Y_0\in \mathcal{M}_j$.
    Since $\neg \theta(X_0)$ holds in $\mathcal{M}_j$, $\theta'(X_0,Y_0)$ also holds in $\mathcal{M}_j$.
    As $e\leq i$, $\theta'\in\Sigma^1_e$ and $\mathcal{M}_j \subset_{\beta_i} \mathcal{M}_\omega$, we have that $\theta'(X_0,Y_0)$ also holds in $\mathcal{M}_\omega$.
    Since the argument above holds for arbitrary $Y_0\in \mathcal{M}_\omega$, we have that $\forall Y \theta'(X_0,Y)$ holds in $\mathcal{M}_\omega$.
    That is, $\neg\theta(X_0)$ holds in $\mathcal{M}_\omega$.
    Therefore both $\forall X.\theta(X)$ and $\exists X.\neg\theta(X)$ hold in $\mathcal{M}$, a contradiction.
\end{proof}

Theorem \ref{thm::psi_and_ref} follows from Lemmas \ref{lem::refl_sigmaoneisdc_proves_psi} and \ref{lem::psi_proves_refl_sigmaoneisdc}.

\begin{corollary}
    \label{cor::comp_and_ref}
    Over $\mathsf{ACA}_0$,
    \begin{enumerate}
        \item $\forall n.\psi_1(1,n)$ is equivalent to $\Pi^1_3$-$\refl(\Pi^1_1$-$\mathsf{CA}_0)$; and
        \item $\forall n.\psi_1(2,n)$ is equivalent to $\Pi^1_3$-$\refl(\Pi^1_2$-$\mathsf{CA}_0)$.
    \end{enumerate}
\end{corollary}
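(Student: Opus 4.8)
The plan is to read both equivalences directly off Theorem~\ref{thm::psi_and_ref} by specializing its parameters, and then to rewrite the theory occurring inside the reflection principle using the comprehension characterizations of $\mathrm{Strong}\;\Sigma^1_k\text{-}\mathsf{DC}_0$ recorded in the Preliminaries. For item (1) I would take $e=1$ and $i=1$; since $e\leq i$ and $e+2=3$, Theorem~\ref{thm::psi_and_ref} gives over $\mathsf{ACA}_0$ that $\forall n.\psi_1(1,n)$ is equivalent to $\Pi^1_3$-$\refl(\mathrm{Strong}\;\Sigma^1_1\text{-}\mathsf{DC}_0)$ (here the macro $\sigmaoneisdc$ is read with $i=1$). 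For item (2) I would take $e=1$ and $i=2$, which again satisfies $e\leq i$, so Theorem~\ref{thm::psi_and_ref} yields that $\forall n.\psi_1(2,n)$ is equivalent to $\Pi^1_3$-$\refl(\mathrm{Strong}\;\Sigma^1_2\text{-}\mathsf{DC}_0)$. It then remains only to replace $\mathrm{Strong}\;\Sigma^1_i\text{-}\mathsf{DC}_0$ by $\Pi^1_i\text{-}\mathsf{CA}_0$ inside the reflection principle for $i=1,2$, using that these theories are equivalent (and both finitely axiomatizable), hence prove exactly the same sentences.

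The one step requiring care, which I regard as the main obstacle, is justifying that $\Pi^1_3$-$\refl(T)$ and $\Pi^1_3$-$\refl(T')$ are equivalent over $\mathsf{ACA}_0$ whenever $T$ and $T'$ are provably equivalent finitely axiomatizable theories. This is not purely formal, since the reflection principle is defined through the provability predicate $\mathrm{Pr}_T$, which changes when the axiomatization changes, whereas the truth predicate $\mathrm{Tr}_{\Pi^1_3}$ does not. Writing $T$ for $\mathrm{Strong}\;\Sigma^1_i\text{-}\mathsf{DC}_0$ and $T'$ for $\Pi^1_i\text{-}\mathsf{CA}_0$, I would show that $\mathsf{ACA}_0$ (indeed much weaker systems suffice) proves $\forall\varphi.(\bew{T}{\varphi}\leftrightarrow\bew{T'}{\varphi})$. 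Concretely, $T'$ proves each axiom of $T$ --- for the scheme $\mathrm{Strong}\;\Sigma^1_i\text{-}\mathsf{DC}_0$, each instance has a $\Pi^1_i\text{-}\mathsf{CA}_0$-derivation that can be produced primitive-recursively from the defining $\Sigma^1_i$-formula $\eta$ --- and symmetrically $T$ proves each (finite) axiom of $T'$. Hence there are primitive recursive functions transforming $T$-proofs into $T'$-proofs of the same conclusion and conversely, and the totality and correctness of these transformations is verifiable in $\mathsf{ACA}_0$; this gives the provable coextensiveness of the two provability predicates.

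Granting this coextensiveness, $\Pi^1_3$-$\refl(\mathrm{Strong}\;\Sigma^1_1\text{-}\mathsf{DC}_0)$ is equivalent to $\Pi^1_3$-$\refl(\Pi^1_1\text{-}\mathsf{CA}_0)$ and $\Pi^1_3$-$\refl(\mathrm{Strong}\;\Sigma^1_2\text{-}\mathsf{DC}_0)$ to $\Pi^1_3$-$\refl(\Pi^1_2\text{-}\mathsf{CA}_0)$, both over $\mathsf{ACA}_0$, since the two reflection statements differ only by replacing $\bew{T}{\varphi}$ with the provably equivalent $\bew{T'}{\varphi}$ in the antecedent. Composing each equivalence with the corresponding instance of Theorem~\ref{thm::psi_and_ref} then establishes items (1) and (2). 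I expect the bookkeeping in the parameter specialization to be entirely routine, so that essentially all the mathematical content of the argument is concentrated in the robustness of the reflection principle under provable theory equivalence.
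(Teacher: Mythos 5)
Your proposal is correct and follows exactly the route the paper intends: the corollary is stated without proof as an immediate specialization of Theorem~\ref{thm::psi_and_ref} at $e=1$, $i\in\{1,2\}$, combined with the equivalence of $\mathrm{Strong}\;\Sigma^1_i\text{-}\mathsf{DC}_0$ and $\Pi^1_i\text{-}\mathsf{CA}_0$ for $i=1,2$ recorded in the Preliminaries. Your additional care about why $\Pi^1_3$-$\refl(T)$ is invariant under passing to a provably equivalent finite axiomatization --- via the $\mathsf{ACA}_0$-verifiable coextensiveness of the two provability predicates --- is a detail the paper leaves implicit, and your treatment of it is sound.
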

\begin{proof}
    By Theorem VII.6.9 of \cite{simpson2009sosoa}, $\mathrm{Strong} \; \Sigma^1_1 \text{-}\mathsf{DC}_0$ is equivalent $\Pi^1_1$-$\mathsf{CA}_0$, and $\mathrm{Strong} \; \Sigma^1_2 \text{-}\mathsf{DC}_0$ is equivalent $\Pi^1_2$-$\mathsf{CA}_0$ (over $\mathsf{ACA}_0$).
\end{proof}

We can now prove:
\begin{theorem}
    Over $\mathsf{ACA}_0$, $\forall n.(\Sigma^0_1)_n$-$\mathsf{Det}$, $\Pi^1_3$-$\refl(\Pi^1_1$-$\mathsf{CA}_0)$, and $\Pi^1_1\text{-}\mathsf{CA}_0'$ are pairwise equivalent.
\end{theorem}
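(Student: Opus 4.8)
The plan is to close a cycle of implications among the four principles $\forall n.(\Sigma^0_1)_n\text{-}\mathsf{Det}$, $\Pi^1_1\text{-}\mathsf{CA}_0'$, $\forall n.\psi_1(1,n)$, and $\Pi^1_3\text{-}\refl(\Pi^1_1\text{-}\mathsf{CA}_0)$, from which the pairwise equivalence of the three principles named in the statement follows at once. Three of the links are already available. Proposition \ref{prop::sigma01det-n_implies_seq_of_beta-models} supplies $\forall n.(\Sigma^0_1)_n\text{-}\mathsf{Det} \Rightarrow \Pi^1_1\text{-}\mathsf{CA}_0'$. The principles $\forall n.\psi_1(1,n)$ and $\Pi^1_1\text{-}\mathsf{CA}_0'$ are equivalent by unwinding the definition of $\psi_1(1,n)$: the chain $Y_0\subseteq_{\beta_1}\cdots\subseteq_{\beta_1}Y_n\subseteq_{\beta_1}\groundmodel$ says precisely that each $Y_k$ is a genuine coded $\beta$-model, which is exactly the sequence demanded by $\Pi^1_1\text{-}\mathsf{CA}_0'$ (and already $\psi_1(1,0)$ yields $\Pi^1_1\text{-}\mathsf{CA}_0$, reconciling the base theory). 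Finally, Corollary \ref{cor::comp_and_ref}(1) gives $\forall n.\psi_1(1,n) \Leftrightarrow \Pi^1_3\text{-}\refl(\Pi^1_1\text{-}\mathsf{CA}_0)$. Recording these, the only implication that remains to be established is $\Pi^1_3\text{-}\refl(\Pi^1_1\text{-}\mathsf{CA}_0)\Rightarrow\forall n.(\Sigma^0_1)_n\text{-}\mathsf{Det}$.

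For this last implication I would argue exactly in the style of Lemma \ref{lem::refl_sigmaoneisdc_proves_psi}, feeding the reflection principle a uniform provability fact. By Tanaka's theorem, $\Pi^1_1\text{-}\mathsf{CA}_0 \vdash (\Sigma^0_1)_n\text{-}\mathsf{Det}$ for each standard $n\geq 2$, while the cases $n\leq 1$ are covered by Steel's theorem and $\mathsf{ATR}_0\subseteq\Pi^1_1\text{-}\mathsf{CA}_0$. The key point is that Tanaka's construction of this proof is primitive recursive in $n$, so $\mathsf{ACA}_0$ proves the single internal statement $\forall n.\bew{\Pi^1_1\text{-}\mathsf{CA}_0}{(\Sigma^0_1)_{\dot n}\text{-}\mathsf{Det}}$. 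Since $(\Sigma^0_1)_n\text{-}\mathsf{Det}$ is, uniformly in $n$, a $\Pi^1_3$ sentence --- the disjunction $\exists\sigma\forall\tau\,\varphi(\sigma\otimes\tau)\lor\exists\tau\forall\sigma\,\neg\varphi(\sigma\otimes\tau)$ has arithmetic matrix and is therefore $\Sigma^1_2$, and universally closing over the index and the set parameters of the payoff $\varphi$ makes the whole statement $\Pi^1_3$ --- I can apply the uniform reflection scheme $\Pi^1_3\text{-}\mathsf{RFN}(\Pi^1_1\text{-}\mathsf{CA}_0)$, which is available by the proposition equating $\Pi^1_3\text{-}\refl$ and $\Pi^1_3\text{-}\mathsf{RFN}$. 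Taking $\varphi(n):=(\Sigma^0_1)_n\text{-}\mathsf{Det}$, the instance $\forall n.\bew{\Pi^1_1\text{-}\mathsf{CA}_0}{\varphi(\dot n)}\to\varphi(n)$ combines with the provability fact to yield $\forall n.(\Sigma^0_1)_n\text{-}\mathsf{Det}$, closing the cycle.

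I expect the main obstacle to be the uniformity and complexity bookkeeping rather than any new combinatorics of games. Specifically, one must verify that Tanaka's inductive determinacy argument is genuinely primitive recursive in the difference rank $n$, so that a single $\mathsf{ACA}_0$-proof establishes $\forall n.\bew{\Pi^1_1\text{-}\mathsf{CA}_0}{(\Sigma^0_1)_{\dot n}\text{-}\mathsf{Det}}$ as an internally $\forall n$-quantified statement, and not merely a metatheoretic schema ranging over standard $n$; and one must confirm that passing to the normal form keeps $(\Sigma^0_1)_n\text{-}\mathsf{Det}$ uniformly $\Pi^1_3$, so that $\Pi^1_3$-reflection is the correct level. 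Once these are checked, the four principles are equivalent over $\mathsf{ACA}_0$, and in particular the three in the statement are pairwise equivalent.
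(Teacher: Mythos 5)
Your proposal is correct and follows essentially the same route as the paper: the implication $\forall n.(\Sigma^0_1)_n\text{-}\mathsf{Det}\Rightarrow\Pi^1_1\text{-}\mathsf{CA}_0'$ from Proposition \ref{prop::sigma01det-n_implies_seq_of_beta-models}, the identification of $\Pi^1_1\text{-}\mathsf{CA}_0'$ with $\forall n.\psi_1(1,n)$, Corollary \ref{cor::comp_and_ref}(1), and then closing the cycle by formalizing Tanaka's theorem inside $\mathsf{ACA}_0$ (via $\Sigma^0_1$-induction on the provability predicate, exactly as the paper does explicitly in the analogous $\Pi^1_2\text{-}\mathsf{CA}_0$ lemma) and applying $\Pi^1_3$-reflection to the uniformly $\Pi^1_3$ sentences $(\Sigma^0_1)_n\text{-}\mathsf{Det}$.
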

\begin{proof}
    By Proposition \ref{prop::sigma01det-n_implies_seq_of_beta-models},  $\forall n.(\Sigma^0_1)_n\text{-}\mathsf{Det}$ implies $\Pi^1_1\text{-}\mathsf{CA}_0'$.
    Also, $\Pi^1_1\text{-}\mathsf{CA}_0'$ and $\forall n.\psi_1(1,n)$ are equivalent.
    By Corollary \ref{cor::comp_and_ref}.1, $\forall n.\psi_1(1,n)$ is equivalent to $\Pi^1_3$-$\refl(\Pi^1_1$-$\mathsf{CA}_0)$.
    Finally, since $\Pi^1_1$-$\mathsf{CA}_0$ proves $(\Sigma^0_1)_2$-$\mathsf{Det}$ and $\forall n. (\Sigma^0_1)_n$-$\mathsf{Det} \to (\Sigma^0_1)_{n+1}$-$\mathsf{Det}$, we can prove $\forall n.(\Sigma^0_1)_n\text{-}\mathsf{Det}$ from $\Pi^1_3$-$\refl(\Pi^1_1$-$\mathsf{CA}_0)$.
\end{proof}

We modify the $\psi_e$ to get a similar result for $\mathsf{ACA}_0'$.
Let $\psi'(n)$ be defined by
\[
    \forall X \exists Y_0, \dots, Y_n \forall k\leq n.\left \{
    \begin{array}{l}
        X \in Y_0 \; \land\\
        (Y_k)' \in Y_{k+1} \; \land\\
        Y_k \models \mathsf{RCA}_0.\\
    \end{array}\right.
\]
One can show that:
\begin{lemma}
    Over $\mathsf{ACA}_0$, $\mathsf{ACA}_0'$ is equivalent to $\forall n.\psi'(n)$.
\end{lemma}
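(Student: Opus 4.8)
The plan is to prove the two implications separately, both over $\mathsf{ACA}_0$. Throughout I would use that a coded ($\omega$-)model $Y \models \mathsf{RCA}_0$ has its collection of sets a Turing ideal, hence closed under $\leq_{\mathrm{T}}$ and $\oplus$, and that $X \in Y$ implies $X \leq_{\mathrm{T}} Y$ since $X$ is a column of the code $Y$.

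For the forward direction I would assume $\mathsf{ACA}_0'$ and derive $\forall n.\psi'(n)$. The key subroutine is that, provably in $\mathsf{ACA}_0$, for any set $D$ there is a minimal coded $\omega$-model $Y(D) = \{W : W \leq_{\mathrm{T}} D\}$ of $\mathsf{RCA}_0$ with $D \in Y(D)$, whose code can be taken $\leq_{\mathrm{T}} \mathrm{TJ}^2(D)$; the only nontrivial point is that $D''$ decides which functionals $\Phi_e^D$ are total. Fixing $n$ and $X$, use $\mathsf{ACA}_0'$ to form $\tuple{X, \mathrm{TJ}(X), \dots, \mathrm{TJ}^{3n}(X)}$ as a single object, set $D_k = \mathrm{TJ}^{3k}(X)$ and $Y_k = Y(D_k)$. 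Each $Y_k$ is arithmetical in $\mathrm{TJ}^{3k+2}(X)$, so $\tuple{Y_0, \dots, Y_n}$ exists by arithmetical comprehension. One checks $X = D_0 \in Y_0$; each $Y_k \models \mathsf{RCA}_0$; and $(Y_k)' \leq_{\mathrm{T}} \mathrm{TJ}^{3k+3}(X) = D_{k+1} \in Y_{k+1}$, whence $(Y_k)' \in Y_{k+1}$ by closure. This witnesses $\psi'(n)$.

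For the reversal I would assume $\forall n.\psi'(n)$ and derive each instance of $\mathsf{ACA}_0'$. Given $n$ and $X$, apply $\psi'(n)$ to obtain $\vec Y = \tuple{Y_0, \dots, Y_n}$ with $X \in Y_0$, $(Y_k)' \in Y_{k+1}$, and $Y_k \models \mathsf{RCA}_0$. The crucial observation is that the iterated jumps are located inside the $Y_k$ by a purely arithmetical search: let $P(k,e)$ assert that there is a finite index-tower $\tuple{e_0, \dots, e_k}$, coded by a single number, with $e_k = e$, $(Y_0)_{e_0} = X$, and $(Y_{j+1})_{e_{j+1}} = ((Y_j)_{e_j})'$ for all $j < k$. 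Each clause is arithmetical in $X, \vec Y$ and the outer existential is a number quantifier, so $P$ is arithmetical. Arithmetical induction on $k \leq n$ (available in $\mathsf{ACA}_0$) gives $\exists e.P(k,e)$: in the step, $(Y_k)_{e_k} = \mathrm{TJ}^k(X) \in Y_k$ yields $\mathrm{TJ}^{k+1}(X) \leq_{\mathrm{T}} (Y_k)' \in Y_{k+1}$, so $\mathrm{TJ}^{k+1}(X) \in Y_{k+1}$ and a suitable $e_{k+1}$ exists. Any two witnessing indices name the same set, so $Z = \{\tuple{m,k} : k \leq n \land \exists e.(P(k,e) \land \tuple{m,e} \in Y_k)\}$ is well-defined and arithmetical in $X, \vec Y$; by arithmetical comprehension $Z$ exists and equals $\tuple{X, \mathrm{TJ}(X), \dots, \mathrm{TJ}^n(X)}$, as required.

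The main obstacle is this reversal, where circularity must be avoided: $\mathsf{ACA}_0$ cannot iterate the jump $n$ times by itself, so the sequence must not be defined by recursion on $k$. What makes the argument go through is that the models $Y_k$ are handed over as an actual sequence, which converts ``iterate the jump $n$ times'' into the bounded, arithmetical task of searching for a tower of indices inside the fixed set $\vec Y$; verifying that this search is genuinely arithmetical — finite index-towers are coded by single numbers, and each jump-location clause is arithmetical in $\vec Y$ — is the heart of the proof.
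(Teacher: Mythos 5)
Your proof is correct and follows essentially the same route as the paper's (two-line) sketch: the forward direction realizes each $Y_k$ as the Turing ideal below an iterated jump of $X$ obtained from $\mathsf{ACA}_0'$, and the reversal extracts $\mathrm{TJ}^k(X)$ from the given sequence of models. You supply the details the sketch omits — the spacing of jumps so that the code of $Y_k$ and its jump land in $Y_{k+1}$, and the arithmetical index-tower search that avoids defining the jump sequence by an illegitimate recursion — but the underlying argument is the same.
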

\begin{proof}[Proof sketch.]
    Fix $X$. First suppose $\mathsf{ACA}_0'$. Given $n\in\mathbb{N}$, we can compute the first $n$ jumps of $X$. For $k\leq n$, let $Y_k$ be the collection of sets computable from $\mathrm{TJ}^n(X)$. Then $Y_0, \dots, Y_n$ satisfy $\psi(n)$. Now, suppose $\forall n.\psi'(n)$ holds. We can extract $\mathrm{TJ}^n(X)$ from $Y_n$ if $Y_0, \dots, Y_n$ witness $\psi'(n)$.
\end{proof}

Similar to Theorem \ref{thm::psi_and_ref}, we have:
\begin{theorem}
    Over $\mathsf{ACA}_0$, $\forall n.\psi'(n)$ is equivalent to $\Pi^1_2$-$\refl(\mathsf{ACA}_0)$.
\end{theorem}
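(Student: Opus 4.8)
The plan is to prove the two implications separately, mirroring the split of Theorem~\ref{thm::psi_and_ref} into Lemmas~\ref{lem::refl_sigmaoneisdc_proves_psi} and~\ref{lem::psi_proves_refl_sigmaoneisdc}, specialized to the case $e=0$ with coded models of $\mathsf{ACA}_0$ (closed under iterated Turing jumps) playing the role of the coded $\beta_i$-models. I would first record that each $\psi'(n)$ is a $\Pi^1_2$-formula: the conjuncts $X\in Y_0$, $(Y_k)'\in Y_{k+1}$, and $Y_k\models\mathsf{RCA}_0$ are all arithmetical in the displayed sets, so $\psi'(n)$ has the shape $\forall X\exists Y_0,\dots,Y_n(\text{arithmetical})$. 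This is what makes the reflection principle $\Pi^1_2$-$\refl(\mathsf{ACA}_0)$ applicable to it.

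For the direction $\Pi^1_2$-$\refl(\mathsf{ACA}_0)\to\forall n.\psi'(n)$ I would repeat the argument of Lemma~\ref{lem::refl_sigmaoneisdc_proves_psi} verbatim. Over $\mathsf{ACA}_0$ one checks $\bew{\mathsf{ACA}_0}{\psi'(0)}$, taking $Y_0$ to be the coded model of the sets recursive in $X$, and $\bew{\mathsf{ACA}_0}{\forall n.\psi'(n)\to\psi'(n+1)}$, extending a tower by appending the coded model of sets recursive in $(Y_n)'$ and using that $\mathsf{ACA}_0$ proves the existence of the jump. Formalizing these two provability facts and closing under modus ponens inside the working model yields $\forall n.\bew{\mathsf{ACA}_0}{\psi'(\dot n)}$; since $\psi'(\dot n)$ is uniformly $\Pi^1_2$, reflection delivers $\psi'(n)$ for every $n$, hence $\forall n.\psi'(n)$.

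For the converse $\forall n.\psi'(n)\to\Pi^1_2$-$\refl(\mathsf{ACA}_0)$ I would run the compactness argument of Lemma~\ref{lem::psi_proves_refl_sigmaoneisdc} at $e=0$. Assume reflection fails: there is $\theta(X)\in\Sigma^1_1$ with $\bew{\mathsf{ACA}_0}{\forall X.\theta(X)}$ but $\neg\theta(X_0)$ for some $X_0$. Form the $\mathcal{L}_1'$-theory $T$ describing a tower $\tuple{\mathcal{M}_j}$ of coded models with $\mathcal{M}_j\models\mathsf{ACA}_0$, $\mathcal{M}_j\subseteq\mathcal{M}_{j+1}$, the jump of each set of $\mathcal{M}_j$ lying in $\mathcal{M}_{j+1}$, $X_0=(A_0)_0$, together with the $e=0$ analogue of the ``$\beta_e$-submodel enough'' clause from the proof of Lemma~\ref{lem::psi_proves_refl_sigmaoneisdc}, which here asserts that each $\mathcal{M}_j$ witnesses every true-in-$\groundmodel$ instance $\exists T.\pi^1_0(e_0,X_0,T)$, where $\pi^1_0$ is a universal lightface arithmetical formula. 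Each finite subtheory is consistent: if $j_0$ is the largest level mentioned, then $\psi'(j_0)$ produces a tower $X_0\in Y_0,\dots,Y_{j_0}$ of genuine coded $\omega$-models closed under the jump, and setting $A_j=Y_j$ satisfies it. Compactness gives a model $\mathcal{M}$, whose union $\mathcal{M}_\omega=(M,\{(A_j)_i\})$ is closed under the jump, hence satisfies $\mathsf{ACA}_0$; by formalized soundness $\mathcal{M}_\omega\models\forall X.\theta(X)$, so $\mathcal{M}_\omega\models\theta(X_0)$, while the correctness clause forces $\mathcal{M}_\omega\models\neg\theta(X_0)$, a contradiction.

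The hard part, and the only genuine divergence from the $e\geq1$ case, will be the correctness clause. Because the compactness model $\mathcal{M}$ has a possibly nonstandard first-order domain $M$, arithmetical absoluteness between $\mathcal{M}_\omega$ and $\groundmodel$ is \emph{not} automatic and must be imposed as an axiom of $T$; this is exactly why the clause is still needed even though $\psi'$, unlike $\psi_e(i,n)$, carries no explicit $\subseteq_{\beta_0}\groundmodel$ conjunct. What rescues the argument at $e=0$ is that $\Sigma^1_1$-correctness about $X_0$ coincides with arithmetical correctness, and the coded $\omega$-models supplied by $\psi'$ share $\groundmodel$'s $\mathbb{N}$ and hence enjoy it for free, so the clause is consistent with every finite fragment. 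I would take care to state the clause with $X_0$ as its only set parameter (quantifying internally over the witness $T$), so that its transfer to $\mathcal{M}_\omega$ genuinely pins down the truth value of $\theta(X_0)$. Finally, since $\theta\in\Sigma^1_1$, only a $\Sigma^1_1$-truth predicate enters the argument, so $\mathsf{ACA}_0$ suffices as the base theory; this is what lets the theorem be stated over $\mathsf{ACA}_0$ rather than over the $\Pi^1_{e+1}$-$\mathsf{CA}_0$ of Lemma~\ref{lem::psi_proves_refl_sigmaoneisdc}, which at $e=0$ would otherwise read $\Pi^1_1$-$\mathsf{CA}_0$.
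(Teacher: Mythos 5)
Your forward direction and the overall compactness strategy for the converse match the paper's proof. The genuine gap is in your ``correctness clause'': you transplant the $\beta_e$-submodel-enough condition of Lemma~\ref{lem::psi_proves_refl_sigmaoneisdc} to $e=0$ as ``each $\mathcal{M}_j$ witnesses every true-in-$\groundmodel$ instance $\exists T.\pi^1_0(e_0,X_0,T)$'', but this clause points the wrong way, and your justification for it is incorrect. First, the claim that the coded $\omega$-models supplied by $\psi'$ ``enjoy it for free'' confuses the two directions of absoluteness: a coded $\omega$-submodel of $\groundmodel$ closed under the Turing jump satisfies every true $\Pi^1_1$ statement about $X_0$ (downward persistence of $\Pi^1_1$), but it need not contain witnesses for every true $\Sigma^1_1$ statement about $X_0$ --- that is exactly the $\beta$-model condition, which jump-closure does not provide (a descending sequence through a pseudo-well-ordering recursive in $X_0$ need not be arithmetical in $X_0$). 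Second, and more importantly, even if you secured the clause on finite fragments (say by joining the finitely many relevant witnesses into the $X$ fed to $\psi'$), it would not yield $\mathcal{M}_\omega\models\neg\theta(X_0)$: the clause imports true $\Sigma^1_1$ facts about $X_0$ into $\mathcal{M}_j$ but does nothing to exclude false ones, whereas what must be ruled out is that the compactness model contains some $T\subseteq M$ with $\mathcal{M}_\omega\models\eta(X_0,T)$ (writing $\theta=\exists T.\eta$ with $\eta$ arithmetical) even though no such $T\subseteq\mathbb{N}$ exists in $\groundmodel$.

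The clause the paper uses is the one you actually need, and it goes in the opposite direction: it adds ``$\mathcal{M}_j\models\neg\theta(X_0)$'' directly as an axiom of $T$. This is finitely satisfiable because $\neg\theta(X_0)$ is $\Pi^1_1$ and hence passes down into each genuine coded $\omega$-model $Y_j$ containing $X_0$; and it lifts to the union, since every $T\in\mathcal{M}_\omega$ lies in some $\mathcal{M}_j$ and arithmetical formulas are absolute between $\mathcal{M}_j$ and $\mathcal{M}_\omega$ over the shared (possibly nonstandard) first-order domain $M$. Replacing your witnessing clause by this one, the rest of your argument goes through.
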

\begin{proof}
    As $\mathsf{ACA}_0$ proves that the Turing jump of any set exists, $\Pi^1_2$-$\refl(\mathsf{ACA}_0)$ proves $\mathsf{ACA}_0'$ and $\forall n.\psi'(n)$.

    Now, let $\theta(X)\in \Sigma^1_1$ be an $\mathcal{L}_2$ formula such that $\bew{\mathsf{ACA}_0}{\forall X.\theta(X)}$ and there is $X_0$ such that $\neg\theta(X_0)$ holds.
    Let the language $\mathcal{L}_1'$ be as in the proof of Lemma \ref{lem::psi_proves_refl_sigmaoneisdc} and define an $\mathcal{L}_1'$-theory $T$ by:
    \begin{enumerate}
        \item $M$ is a discrete ordered semiring;
        \item $\mathcal{M}_j\models\mathsf{RCA}_0$ for all $j\in \mathbb{N}$;
        \item $\mathcal{M}_j\subseteq \mathcal{M}_{j+1}$ for all $j\in \mathbb{N}$;
        \item $(\mathcal{M}_j)'\in \mathcal{M}_{j+1}$ for all $j\in \mathbb{N}$;
        \item $\mathcal{M}_j\models \neg \theta(X_0)$ for all $j\in \mathbb{N}$; and
        \item $X_0 = (\mathcal{M}_0)_0$, that is, for all $n\in\mathbb{N}$, $n\in X_0$ if and only if $x\in (\mathcal{M}_0)_0$.
    \end{enumerate}
    Again, $\mathcal{M}_j$ is $(M,\{(A_j)_i\mid i\in M\})$.

    Now, $\forall n.\psi'(n)$ supplies a model for any finite subtheory of $T$.
    By compactness, there is a model $\mathcal{M} = (M,A)$ of $T$.
    Define the coded model $\mathcal{M}_\omega$ by $(M,\{(A_j)_i \mid i\in M, j\in \mathbb{N}\})$.
    Since $\mathcal{M}_\omega$ is closed under Turing jumps, it is a model of $\mathsf{ACA}_0$, and thus $\mathcal{M}_\omega\models \theta(X_0)$.
    But by the definition of $T$, $\mathcal{M}_\omega\models \neg\theta(X_0)$.
    This is a contradiction.
    Therefore if $\bew{\mathsf{ACA}_0}{\forall X.\theta(X)}$ holds, so must $\forall X.\theta(X)$.
\end{proof}

We can then use Propositions \ref{prop::sigma01det-n_implies_ACA0prime} and \ref{prop::cantor_reduce_complexity} to show:
\begin{theorem}
    Over $\mathsf{ACA}_0$, $\forall n.(\Sigma^0_1)_n$-$\mathsf{Det}^*$, $\Pi^1_2$-$\refl(\mathsf{ACA}_0)$ and $\mathsf{ACA}_0'$ are pairwise equivalent.
\end{theorem}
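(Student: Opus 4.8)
The three statements fall into a single equivalence class once we combine what is already available with one new implication. Recall that the two preceding results give $\mathsf{ACA}_0' \leftrightarrow \forall n.\psi'(n) \leftrightarrow \Pi^1_2\text{-}\refl(\mathsf{ACA}_0)$ over $\mathsf{ACA}_0$, so it suffices to tie $\forall n.(\Sigma^0_1)_n\text{-}\mathsf{Det}^*$ into this class. One direction is immediate from Proposition \ref{prop::sigma01det-n_implies_ACA0prime}: since $\forall n.(\Sigma^0_1)_n\text{-}\mathsf{Det}^*$ implies $\mathsf{ACA}_0'$, it implies $\Pi^1_2\text{-}\refl(\mathsf{ACA}_0)$ as well. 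The only thing left to prove is the reversal, namely that $\Pi^1_2\text{-}\refl(\mathsf{ACA}_0)$ (equivalently $\mathsf{ACA}_0'$) implies $\forall n.(\Sigma^0_1)_n\text{-}\mathsf{Det}^*$.

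The plan for the reversal is to export the known finite-level determinacy results by reflection. By Nemoto \emph{et al.}, for every standard $n$ we have $\mathsf{ACA}_0 \vdash (\Sigma^0_1)_n\text{-}\mathsf{Det}^*$, and the argument is uniform in $n$: a primitive recursive function produces from $n$ an $\mathsf{ACA}_0$-proof of the $n$-th instance. Consequently $\forall n.\bew{\mathsf{ACA}_0}{(\Sigma^0_1)_{\dot n}\text{-}\mathsf{Det}^*}$ is available already inside the base theory, and so holds for every $n$, standard or nonstandard. By the complexity computation behind Corollary \ref{cor::cantor_reduce_complexity}, each instance $(\Sigma^0_1)_n\text{-}\mathsf{Det}^*$ is, provably in $\mathsf{ACA}_0$ and uniformly in $n$, equivalent to a $\Pi^1_2$ formula: on Cantor space, whenever a player wins a $(\Sigma^0_1)_n$ game coded by $X$ the winning strategy is recoverable from the $n$-th iterated Turing jump of $X$, so the nominally $\Sigma^1_2$ assertion ``some player has a winning strategy'' collapses to the $\Pi^1_2$ assertion that the relevant iterated jump exists. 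Feeding this uniformly $\Pi^1_2$, uniformly $\mathsf{ACA}_0$-provable family into the uniform reflection scheme $\Pi^1_2\text{-}\mathsf{RFN}(\mathsf{ACA}_0)$ --- which is equivalent to $\Pi^1_2\text{-}\refl(\mathsf{ACA}_0)$ by the proposition relating $\refl$ and $\mathsf{RFN}$ --- yields $(\Sigma^0_1)_n\text{-}\mathsf{Det}^*$ for every $n$, that is, $\forall n.(\Sigma^0_1)_n\text{-}\mathsf{Det}^*$.

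With both directions in hand, $\forall n.(\Sigma^0_1)_n\text{-}\mathsf{Det}^*$, $\Pi^1_2\text{-}\refl(\mathsf{ACA}_0)$ and $\mathsf{ACA}_0'$ are pairwise equivalent over $\mathsf{ACA}_0$. I expect the main obstacle to be the uniformity bookkeeping in the reversal: one must check that Nemoto \emph{et al.}'s proof is genuinely schematic, so that $\forall n.\bew{\mathsf{ACA}_0}{(\Sigma^0_1)_{\dot n}\text{-}\mathsf{Det}^*}$ is available inside the base theory for nonstandard $n$ and not merely externally for each standard $n$, and that the collapse of $(\Sigma^0_1)_n\text{-}\mathsf{Det}^*$ to a $\Pi^1_2$ formula is uniform in the parameter $n$, so that a single instance of the uniform reflection scheme covers the whole family at level exactly $2$. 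A reflection-free alternative is the direct route through $\forall n.\psi'(n)$: from a sequence $Y_0,\dots,Y_n$ of jump-closed models one reads off the $n$-th iterated jump of the code of any $(\Sigma^0_1)_n$ game inside $Y_n$, and thence a winning strategy, reversing the construction of Proposition \ref{prop::sigma01det-n_implies_ACA0prime}.
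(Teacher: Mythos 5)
Your proof is correct and follows essentially the same route as the paper: the forward direction is Proposition \ref{prop::sigma01det-n_implies_ACA0prime} combined with the already-established chain $\mathsf{ACA}_0' \leftrightarrow \forall n.\psi'(n) \leftrightarrow \Pi^1_2\text{-}\refl(\mathsf{ACA}_0)$, and the reversal exports the level-by-level $\mathsf{ACA}_0$-provability of $(\Sigma^0_1)_n\text{-}\mathsf{Det}^*$ through reflection via the $\Pi^1_2$ complexity reduction of Corollary \ref{cor::cantor_reduce_complexity}. The paper compresses all of this into a one-line citation of that proposition and corollary, so your version, including the uniformity caveats you flag, is simply a more explicit rendering of the same argument.
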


During the RIMS 2021 Proof Theory Workshop, Toshiyasu Arai asked about the characterization of the existence of sequences of coded models of transfinite length.

\section{The $\Pi^1_2$-$\mathsf{CA}_0$ Case}
\label{section::the_pi12-ca_case}
In this section, we prove:
\begin{theorem}
    \label{thm::main-pi12-theorem}
    Over $\mathsf{ACA}_0$, $\forall n.(\Sigma^0_2)_n$-$\mathsf{Det}$, $\Pi^1_3$-$\refl(\Pi^1_2$-$\mathsf{CA}_0)$, and $\forall n.[\Sigma^1_1]^n\text{-}\mathsf{ID}$ are pairwise equivalent.
\end{theorem}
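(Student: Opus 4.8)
The plan is to prove the three-way equivalence by establishing the cycle
\[
\forall n.(\Sigma^0_2)_n\text{-}\mathsf{Det} \;\Rightarrow\; \forall n.[\Sigma^1_1]^n\text{-}\mathsf{ID} \;\Rightarrow\; \Pi^1_3\text{-}\refl(\Pi^1_2\text{-}\mathsf{CA}_0) \;\Rightarrow\; \forall n.(\Sigma^0_2)_n\text{-}\mathsf{Det},
\]
working over $\mathsf{ACA}_0$ and using freely that each of the three principles proves $\Pi^1_1\text{-}\mathsf{CA}_0$, hence $\mathsf{ATR}_0$, so that the equivalence $[\Sigma^1_1]^k\text{-}\mathsf{ID}\leftrightarrow(\Sigma^0_2)_k\text{-}\mathsf{Det}$ of MedSalem and Tanaka is available. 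The first implication is Proposition \ref{prop::ind-k_and_determinacy-forall-ver}: since $\forall n.(\Sigma^0_2)_n\text{-}\mathsf{Det}$ proves $(\Sigma^0_1)_2\text{-}\mathsf{Det}$, hence $\Pi^1_1\text{-}\mathsf{CA}_0\supseteq\mathsf{ATR}_0$, the hypothesis of that proposition is met. For the second implication I will invoke Corollary \ref{cor::comp_and_ref}(2), which already identifies $\Pi^1_3\text{-}\refl(\Pi^1_2\text{-}\mathsf{CA}_0)$ with $\forall n.\psi_1(2,n)$; it therefore suffices to construct, from $\forall n.[\Sigma^1_1]^n\text{-}\mathsf{ID}$, the chains of coded $\beta_2$-models asserted by $\forall n.\psi_1(2,n)$. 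This construction is the main lemma of the section.

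I would prove the closing implication $\Pi^1_3\text{-}\refl(\Pi^1_2\text{-}\mathsf{CA}_0)\Rightarrow\forall n.(\Sigma^0_2)_n\text{-}\mathsf{Det}$ exactly as in Lemma \ref{lem::refl_sigmaoneisdc_proves_psi}, exploiting the \emph{low} complexity of the determinacy statements. A $(\Sigma^0_2)_n$-game determinacy assertion has the form $\exists\sigma\forall\tau\,\varphi\lor\exists\tau\forall\sigma\,\neg\varphi$ with $\varphi$ arithmetic, so it is $\Sigma^1_2$ and a fortiori $\Pi^1_3$. By the uniform version of MedSalem and Tanaka's $[\Sigma^1_1]^k\text{-}\mathsf{ID}\to(\Sigma^0_2)_k\text{-}\mathsf{Det}$, together with the fact that $\Pi^1_2\text{-}\mathsf{CA}_0$ proves each $[\Sigma^1_1]^n\text{-}\mathsf{ID}$ uniformly in $n$ (the relevant least fixed points are $\Pi^1_2$-definable, and the $n$-fold iteration is a uniform construction), one obtains $\mathsf{ACA}_0\vdash\forall n.\bew{\Pi^1_2\text{-}\mathsf{CA}_0}{(\Sigma^0_2)_{\dot n}\text{-}\mathsf{Det}}$. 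Applying $\Pi^1_3\text{-}\refl(\Pi^1_2\text{-}\mathsf{CA}_0)$ to each instance then yields $\forall n.(\Sigma^0_2)_n\text{-}\mathsf{Det}$, closing the cycle.

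The heart of the argument, and the step I expect to be the main obstacle, is $\forall n.[\Sigma^1_1]^n\text{-}\mathsf{ID}\Rightarrow\forall n.\psi_1(2,n)$. Given $X$ and $n$, I would use the $n$-fold combination of $\Sigma^1_1$-inductive operators furnished by $[\Sigma^1_1]^n\text{-}\mathsf{ID}$ to build the entire chain $X\in Y_0\subseteq_{\beta_2}\cdots\subseteq_{\beta_2}Y_n\subseteq_{\beta_1}\groundmodel$ simultaneously, reading the coded models $Y_0,\dots,Y_n$ and their local satisfaction data off the inductively defined set. The design principle is that a single $\Sigma^1_1$-inductive definition already computes the $\Sigma^1_1$-theory of $\groundmodel$ relative to a set, which is exactly the information needed both to close a set into a coded model of $\mathsf{ACA}_0$ and to witness the top relation $Y_n\subseteq_{\beta_1}\groundmodel$; the successive inductive layers then supply, at each step, enough of the $\Sigma^1_2$-theory of $Y_{k+1}$ to certify the stronger relation $Y_k\subseteq_{\beta_2}Y_{k+1}$.

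The delicate point is precisely the $\beta_2$-absoluteness between consecutive models: in contrast with the difference-of-open constructions of Propositions \ref{prop::sigma01det-n_implies_ACA0prime} and \ref{prop::sigma01det-n_implies_seq_of_beta-models}, which only control $\Sigma^1_1$ facts, here I must make $\Sigma^1_2$-statements absolute from $Y_k$ to $Y_{k+1}$. I expect to handle this by engineering each operator so that its fixed point explicitly encodes witnesses (and, via the complementary $\Pi^1_1$-part, the absence of witnesses) for the relevant $\Sigma^1_2$-formulas, following MedSalem and Tanaka's analysis of $[\Sigma^1_1]^n\text{-}\mathsf{ID}$ in \cite{medsalem2008weak}. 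Because the construction is uniform in $n$ and $[\Sigma^1_1]^n\text{-}\mathsf{ID}$ holds for every $n$, this delivers $\forall n.\psi_1(2,n)$, completing the cycle and hence the theorem.
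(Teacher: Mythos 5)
Your proposal follows essentially the same route as the paper: the same three-implication cycle, using Proposition \ref{prop::ind-k_and_determinacy-forall-ver} for the first arrow, Corollary \ref{cor::comp_and_ref}(2) plus a multiple-inductive-definition construction of $\beta_2$-model chains (the paper's Lemma \ref{lem::from-induction-to-seq-of-models}, realized with $2n-1$ operators handling comprehension, $\beta_2$-submodelhood, and element inclusion) for the second, and formalized provability of each $(\Sigma^0_2)_n$-$\mathsf{Det}$ in $\Pi^1_2$-$\mathsf{CA}_0$ followed by reflection for the third. The only cosmetic difference is that the paper obtains $\forall n.\bew{\Pi^1_2\text{-}\mathsf{CA}_0}{(\Sigma^0_2)_n\text{-}\mathsf{Det}}$ by $\Sigma^0_1$-induction on the provable implications $(\Sigma^0_2)_n\text{-}\mathsf{Det}\to(\Sigma^0_2)_{n+1}\text{-}\mathsf{Det}$, rather than by your detour through provability of $[\Sigma^1_1]^n$-$\mathsf{ID}$.
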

Like in the $\forall n.(\Sigma^0_1)_n$-$\mathsf{Det}$ and $\forall n.(\Sigma^0_1)_n$-$\mathsf{Det}^*$ cases, one half of this theorem has a straight proof:
\begin{lemma}
    Over $\mathsf{ACA}_0$, $\Pi^1_3$-$\refl(\Pi^1_2$-$\mathsf{CA}_0)$ proves $\forall n.(\Sigma^0_2)_n$-$\mathsf{Det}$.
\end{lemma}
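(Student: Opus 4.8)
The plan is to show that $\Pi^1_3$-$\refl(\Pi^1_2\text{-}\mathsf{CA}_0)$ proves $\forall n.(\Sigma^0_2)_n\text{-}\mathsf{Det}$ by combining the reflection principle with MedSalem and Tanaka's theorem that $(\Sigma^0_2)_n\text{-}\mathsf{Det}$ follows from $[\Sigma^1_1]^n\text{-}\mathsf{ID}$ over $\mathsf{ATR}_0$. The idea is to run the same pattern already used in Lemma \ref{lem::refl_sigmaoneisdc_proves_psi}: identify a statement $\varphi$ that $\Pi^1_2\text{-}\mathsf{CA}_0$ provably establishes and that, when reflected down to truth, yields $(\Sigma^0_2)_n\text{-}\mathsf{Det}$. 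Since $\Pi^1_2\text{-}\mathsf{CA}_0$ is exactly $\sigmaoneisdc$ with $i=2$, and it proves both $\mathsf{ATR}_0$ and the existence of the inductively-defined sets in $[\Sigma^1_1]^n\text{-}\mathsf{ID}$, the MedSalem--Tanaka result gives that $\Pi^1_2\text{-}\mathsf{CA}_0$ proves $(\Sigma^0_2)_n\text{-}\mathsf{Det}$ for each standard $n$.

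Concretely, first I would fix an external $n\in\mathbb{N}$ and let $\varphi_n$ be the sentence asserting $(\Sigma^0_2)_n\text{-}\mathsf{Det}$, i.e.\ $\forall$(payoff in $(\Sigma^0_2)_n$)(the game is determined). The key is that $\varphi_n$ is (equivalent over $\mathsf{ACA}_0$ to) a $\Pi^1_3$-sentence: the payoff is arithmetical relative to the game's parameters, so ``player $\mathsf{I}$ has a winning strategy or player $\mathsf{II}$ has a winning strategy'' has the form $\exists\sigma\forall\tau(\cdots)\lor\exists\tau\forall\sigma(\cdots)$, which is $\Sigma^1_2$, and prefixing the universal quantifier over payoff indices keeps it within $\Pi^1_3$. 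Next, I would verify $\bew{\Pi^1_2\text{-}\mathsf{CA}_0}{\varphi_n}$ for each standard $n$: this is precisely the content of MedSalem and Tanaka \cite{medsalem2008weak}*{Theorem 3.3} together with the fact that $\Pi^1_2\text{-}\mathsf{CA}_0$ proves $[\Sigma^1_1]^n\text{-}\mathsf{ID}$ for each $n$ (the inductive sets are $\Pi^1_2$-definable, hence exist by comprehension). Applying $\Pi^1_3$-$\refl(\Pi^1_2\text{-}\mathsf{CA}_0)$ to the provable $\Pi^1_3$-sentence $\varphi_n$ yields that $\varphi_n$ is \emph{true}, i.e.\ $(\Sigma^0_2)_n\text{-}\mathsf{Det}$ holds. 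Since this works for every $n\in\mathbb{N}$, we conclude $\forall n.(\Sigma^0_2)_n\text{-}\mathsf{Det}$.

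The main obstacle I anticipate is the careful bookkeeping of syntactic complexity: I must confirm that $(\Sigma^0_2)_n\text{-}\mathsf{Det}$, once the universal quantifier over the (possibly nonstandard) difference-rank $n$ and over payoff indices is folded in, really lies within the scope of the $\Pi^1_3$ truth predicate that the reflection principle handles. The formalized difference hierarchy from the preliminaries is designed precisely so that a $(\Sigma^0_n)_k$-formula is given uniformly by an underlying $\Sigma^0_k$-formula $\psi(y,f)$, so the arithmetical matrix of the payoff stays $\Sigma^0_2$ (or $\Pi^0_2$) and the normal-form conventions for $\Pi^1_3$-sentences apply; I would cite that setup to close the gap. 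A secondary point worth stating explicitly is that although $[\Sigma^1_1]^n\text{-}\mathsf{ID}$ is invoked internally in $\Pi^1_2\text{-}\mathsf{CA}_0$, we only use it at standard $n$ on the \emph{provability} side, so no uniform (nonstandard-$n$) version of MedSalem--Tanaka is required here — the nonstandard instances of determinacy come for free from reflection acting on the single external quantifier over $n$. This is the same mechanism that made Lemma \ref{lem::refl_sigmaoneisdc_proves_psi} go through, and I would remark on the parallel.
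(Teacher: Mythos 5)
Your overall strategy --- reflect provable instances of $(\Sigma^0_2)_n$-$\mathsf{Det}$ down to truth --- is the right one, and your complexity bookkeeping (each instance is $\Pi^1_3$) matches the paper. But there is a genuine gap at the final step. The target $\forall n.(\Sigma^0_2)_n$-$\mathsf{Det}$ is a single sentence whose quantifier $\forall n$ ranges over all of $\mathbb{N}$ in the model at hand, including nonstandard elements. Your argument establishes $\bew{\Pi^1_2\text{-}\mathsf{CA}_0}{(\Sigma^0_2)_n\text{-}\mathsf{Det}}$ only ``for each standard $n$'', by invoking MedSalem--Tanaka once per external $n$; reflection then yields the standard instances of determinacy and nothing more. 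The claim in your closing remark that ``the nonstandard instances come for free from reflection acting on the single external quantifier over $n$'' is exactly where the argument breaks: to apply uniform reflection to $\varphi(n)\equiv(\Sigma^0_2)_n\text{-}\mathsf{Det}$ at a nonstandard $n$, you first need $\mathrm{Pr}_{\Pi^1_2\text{-}\mathsf{CA}_0}(\ulcorner\varphi(\dot n)\urcorner)$ to hold \emph{inside the model} for that $n$, and nothing in your proposal establishes this. (Reflecting the single sentence $\forall n.\varphi(n)$ is not an option either: it is not provable in $\Pi^1_2\text{-}\mathsf{CA}_0$, since it implies $\Pi^1_3$-$\refl(\Pi^1_2\text{-}\mathsf{CA}_0)$ and hence $\mathrm{Con}(\Pi^1_2\text{-}\mathsf{CA}_0)$.)

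The missing step, which is the entire content of the paper's proof, is to prove $\forall n.\bew{\Pi^1_2\text{-}\mathsf{CA}_0}{(\Sigma^0_2)_n\text{-}\mathsf{Det}}$ internally. The paper does this by formalizing two single metatheoretic facts, namely $\bew{\Pi^1_2\text{-}\mathsf{CA}_0}{\Sigma^0_2\text{-}\mathsf{Det}}$ and the uniform-in-$n$ statement $\bew{\Pi^1_2\text{-}\mathsf{CA}_0}{(\Sigma^0_2)_{n}\text{-}\mathsf{Det}\to(\Sigma^0_2)_{n+1}\text{-}\mathsf{Det}}$, and then running $\Sigma^0_1$-induction on $n$ inside $\mathsf{ACA}_0$; this is legitimate because the provability predicate is $\Sigma^0_1$. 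Your route through $[\Sigma^1_1]^n$-$\mathsf{ID}$ could be repaired in the same way (prove the base case and the uniform successor step of the MedSalem--Tanaka induction inside the provability predicate, then induct internally), but the uniformity in $n$ that this requires is precisely what you have explicitly disclaimed needing.
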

\begin{proof}
    $\Pi^1_2\text{-}\mathsf{CA}_0$ proves $\Sigma^0_2$-$\mathsf{Det}$ and, for all $n\in\omega$, $(\Sigma^0_2)_n\text{-}\mathsf{Det} \to (\Sigma^0_2)_{n+1}\text{-}\mathsf{Det}$.
    Formalizing these proofs inside $\mathsf{ACA}_0$, we have that
    \[
        \bew{\Pi^1_2\text{-}\mathsf{CA}_0}{\Sigma^0_2\text{-}\mathsf{Det}}
    \]
    and
    \[
        \bew{\Pi^1_2\text{-}\mathsf{CA}_0}{(\Sigma^0_2)_{n}\text{-}\mathsf{Det}\to (\Sigma^0_2)_{n}\text{-}\mathsf{Det}}.
    \]
    By $\Sigma^0_1$-induction, we have
    \[
        \forall n.\bew{\Pi^1_2\text{-}\mathsf{CA}_0}{(\Sigma^0_2)_n\text{-}\mathsf{Det}}.
    \]
    In particular, for any $n\in\mathbb{N}$, $\bew{\Pi^1_2\text{-}\mathsf{CA}_0}{(\Sigma^0_2)_n\text{-}\mathsf{Det}}$.
    So $\Pi^1_3$-$\refl(\Pi^1_2$-$\mathsf{CA}_0)$ implies $(\Sigma^0_2)_n\text{-}\mathsf{Det}$.
\end{proof}

We will use multiple $\Sigma^1_1$-inductive definitions to show the other half of Theorem \ref{thm::main-pi12-theorem}.
Before doing so, we review the idea behind $[\Sigma^1_1]^k$-$\mathsf{ID}$.
Let $\Gamma_0, \Gamma_1$ be $\Sigma^1_1$-inductive operators.
Note that we do not require that our operators are monotone.
Starting from the empty set, apply $\Gamma_0$ until we obtain a fixed-point $X_0$ of $\Gamma_0$:
\[
    \emptyset, \Gamma_0(\emptyset), \Gamma_0(\Gamma_0(\emptyset)) \cup \Gamma_0(\emptyset), \dots
\]
Apply $\Gamma_1$ once, and then generate another fixed-point for $\Gamma_0$:
\[
    \Gamma_1(X_0), \Gamma_0(\Gamma_1(X_0)), \Gamma_0(\Gamma_0(\Gamma_1(X_0))) \cup \Gamma_0(\Gamma_1(X_0)), \dots
\]
Repeat this process until we obtain a fixed-point for both $\Gamma_0$ and $\Gamma_1$.
In case we are combining three operators $\Gamma_0,\Gamma_1,\Gamma_2$, we apply $\Gamma_2$ whenever we get a fixed-point for $\Gamma_0$ and $\Gamma_1$, and repeat until we get a fixed-point for all three operators.
The general case with $k$ operators follows the same idea.
\begin{lemma}
    \label{lem::from-induction-to-seq-of-models}
    Over $\mathsf{ACA}_0$, $\forall n.[\Sigma^1_1]^n$-$\mathsf{ID}$ proves $\forall n.\psi_1(2,n)$.
\end{lemma}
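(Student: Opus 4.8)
The plan is to establish $\psi_1(2,n)$ level by level: for each fixed $n$ I will show that combining finitely many (roughly $n$-many) $\Sigma^1_1$-inductive operators already produces the tower
\[
    X\in Y_0\in Y_1\in\cdots\in Y_n,\qquad Y_0\subseteq_{\beta_2}\cdots\subseteq_{\beta_2}Y_n\subseteq_{\beta_1}\groundmodel
\]
demanded by $\psi_1(2,n)$. Since $\forall n.[\Sigma^1_1]^n\text{-}\mathsf{ID}$ supplies every finite level of combinations of $\Sigma^1_1$-inductive operators, this yields $\forall n.\psi_1(2,n)$. Working over $\mathsf{ACA}_0$, I first use the standard fact that already $[\Sigma^1_1]^1\text{-}\mathsf{ID}$ proves $\Pi^1_1\text{-}\mathsf{CA}_0$, so that (exactly as in the proof of Proposition \ref{prop::ind-k_and_determinacy-forall-ver}) I may pass to a coded $\beta$-model and carry out the construction there. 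Fixing $X$ and $n$, the task reduces to the design of operators $\Gamma_0,\dots,\Gamma_{m-1}$, relative to $X$, whose combined inductively defined set $V_m$ codes the whole tower.

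There are two building blocks. First, a single $\Sigma^1_1$-inductive definition closes a given set into a coded $\beta$-model; applied outermost this guarantees that the top model $Y_n$ is a $\beta$-model, so $Y_n\subseteq_{\beta_1}\groundmodel$. Second, I exploit the nested fixed-point structure reviewed just before the statement: each new operator $\Gamma_k$ fires only once the lower operators $\Gamma_0,\dots,\Gamma_{k-1}$ have stabilized, so the stage that produces $Y_{k+1}$ already contains the completed object $Y_k$. This simultaneous, coherent construction delivers $Y_k\in Y_{k+1}$ and $Y_k\models\mathsf{ACA}_0$ for every $k$, and it is essential that the tower be generated together rather than from the top down, since a $\beta_2$-elementary submodel must already contain the relevant witnesses at the moment it is formed.

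The main obstacle is the verification of the $\beta_2$-submodel relations $Y_k\subseteq_{\beta_2}Y_{k+1}$. I would show that the nested construction reflects every $\Sigma^1_2$ truth of $Y_{k+1}$ down into $Y_k$: given a statement $\exists Z\,\pi(Z)$ with $\pi\in\Pi^1_1$ and parameters in $Y_k$ that holds in $Y_{k+1}$, a basis-theorem argument for $\Sigma^1_2$ sets (uniformizing the $\Pi^1_1$ matrix by Kond\^o's theorem) produces a witness lying in the $\Sigma^1_1$-inductive closure computed by $\Gamma_k$, which by the design of $\Gamma_k$ is thrown into $Y_k$; the upward direction follows from $Y_k\subseteq_{\beta_1}Y_{k+1}$. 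Making this witness-capturing precise and uniform across all levels is the delicate point, and it also governs the complexity bookkeeping, namely how many operators are consumed per level (compare the cubic blow-up to $(\Sigma^0_2)_{k^3}$ in Proposition \ref{prop::ind-k_and_determinacy-forall-ver}). Finally, as in that proposition, I would formalize the whole argument inside $\mathsf{ACA}_0$ by relativizing MedSalem and Tanaka's analysis of $\Sigma^1_1$-inductive definitions, so that $V_m$ and the decoded tower $Y_0,\dots,Y_n$ are provably available.
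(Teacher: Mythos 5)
Your overall architecture is the right one and matches the paper's in spirit: a simultaneous system of $\Sigma^1_1$-inductive operators, one block per level of the tower, with one operator responsible for closing each $Y_k$ into a coded $\beta$-model (hence $Y_n\subseteq_{\beta_1}\groundmodel$), and further operators responsible for the relations $Y_k\subseteq_{\beta_2}Y_{k+1}$ and $Y_k\in Y_{k+1}$. But the two steps you flag as delicate are exactly where the proposal has genuine gaps, and in both cases the mechanism you sketch is not the one that works. For $Y_k\subseteq_{\beta_2}Y_{k+1}$ you propose to uniformize the $\Pi^1_1$ matrix by Kond\^o's theorem and argue that the canonical witness lands in the $\Sigma^1_1$-inductive closure computed by your operator. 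There is no reason for that: the Kond\^o witness is only $\Delta^1_2$ in the parameters, and asking $Y_k$ to absorb such witnesses is essentially asking it to be a $\beta_2$-model already, which is circular. The paper's operator $\Gamma_{2k+1}$ does something much more direct and entirely avoids any basis theorem: whenever $Y_{k+1}$ satisfies $\exists Y\,\pi^1_1(e_0,Y,\bar P)$ with $\bar P$ in $Y_k$ but $Y_k$ does not, it issues a recipe that literally copies the (least-indexed) witness \emph{from $Y_{k+1}$} into $Y_k$; at a simultaneous fixed point no such discrepancy survives, which is all that $\subseteq_{\beta_2}$ requires.

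The second gap is the membership relation $Y_k\in Y_{k+1}$. You assert that, because the operators are nested, ``the stage that produces $Y_{k+1}$ already contains the completed object $Y_k$.'' That is not how the simultaneous non-monotone induction behaves: each firing of a higher operator triggers re-closure of the lower ones, which adds new sets to $Y_{k+1}$, which in turn (via the $\subseteq_{\beta_2}$ operator) can add new sets to $Y_k$; so $Y_k$ is never a finished object at any intermediate stage, and an inductive operator can only ever add elements, never revise a set it has already placed inside $Y_{k+1}$. The paper resolves this with an auxiliary ``candidate copy'' $M_k^c$ of $Y_k$ together with $\mathtt{elem}$ recipes: whenever $Y_k$ and $M_k^c$ differ, the copy is refreshed and a fresh recipe for inserting it into $Y_{k+1}$ is created, so that at the simultaneous fixed point $Y_k=M_k^c$ and this common value is an element of $Y_{k+1}$. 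Without some device of this kind your construction does not deliver $Y_k\in Y_{k+1}$. (Two smaller remarks: the detour through a coded $\beta$-model at the start is unnecessary --- the paper runs the construction directly in the ground model and gets $Y_n\subseteq_{\beta_1}\groundmodel$ from closure under hyperjumps --- and the operator count is $2n-1$, linear in $n$, not governed by the cubic blow-up of Proposition \ref{prop::ind-k_and_determinacy-forall-ver}, which belongs to the converse direction.)
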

\noindent To finish the proof of Theorem \ref{thm::main-pi12-theorem} we use Proposition \ref{prop::ind-k_and_determinacy-forall-ver} and Corollary \ref{cor::comp_and_ref}.

For the proof of Lemma \ref{lem::from-induction-to-seq-of-models} we need only the existence of the fixed-points.
Therefore we use a simpler to state variation of $[\Sigma^1_1]$-$\mathsf{ID}$:
For $n\in \mathbb{N}$, $[\Sigma^1_1]^n$-$\mathsf{LFP}$ asserts that for any sequence $\tuple{\Gamma_i \mid i<n}$, there exists a smallest set $X$ which satisfies $\Gamma_i(X) = X$ for all $i<n$.
\begin{lemma}
    \label{lem::id-implies-lfp}
    $\forall n.[\Sigma^1_1]^n$-$\mathsf{ID}$ implies $\forall n.[\Sigma^1_1]^n$-$\mathsf{LFP}$ over $\mathsf{RCA}_0$.
\end{lemma}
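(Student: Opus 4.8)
The plan is to extract the desired simultaneous fixed point directly from the set produced by $[\Sigma^1_1]^n$-$\mathsf{ID}$ and then verify both the fixed-point equations and minimality by an induction along the stages of the inductive construction. Fix $n$ and a sequence $\tuple{\Gamma_i \mid i<n}$ of $\Sigma^1_1$-operators. First I would invoke $[\Sigma^1_1]^n$-$\mathsf{ID}$ to obtain the set $V$ inductively defined by $\tuple{\Gamma_i \mid i<n}$. In the formalization of MedSalem and Tanaka \cite{medsalem2008weak}, $V$ is coded together with a well-founded stage-comparison relation recording, for each element, at which step of the construction and by which operator it was generated. Reading off the underlying content of $V$ and the stage of each of its elements is $\Delta^0_1$ in $V$, so the set $X$ of elements of $V$, together with its stage data, is available in $\mathsf{RCA}_0$.

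Next I would check that $X$ is a common fixed point, that is, $\Gamma_i(X)=X$ for every $i<n$. By construction the iteration halts exactly when simultaneous closure is reached, which gives $\Gamma_i(X)\subseteq X$; the reverse inclusion comes from the cumulative bookkeeping of the construction (the union taken at each successor step, as in the sequence $\emptyset,\ \Gamma_0(\emptyset),\ \Gamma_0(\Gamma_0(\emptyset))\cup\Gamma_0(\emptyset),\dots$ described above). Unwinding the definition of ``inductively defined by the combination $\tuple{\Gamma_i \mid i<n}$'', this direction is routine.

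The remaining, and main, task is minimality: if $Y$ satisfies $\Gamma_i(Y)=Y$ for all $i<n$, then $X\subseteq Y$. I would argue by induction along the well-founded stage ordering coded in $V$ (equivalently, by taking a stage-minimal element of $X\setminus Y$ and deriving a contradiction): an element $m$ entering at a stage all of whose predecessors already lie in $Y$ is produced by applying some $\Gamma_i$ to those predecessors, and one wants to conclude $m\in\Gamma_i(Y)=Y$. Since the stage relation is well-founded and the predicates ``$m\in X$'', ``$m\in Y$'' and ``$m$ enters at stage $\alpha$'' are all $\Delta^0_1$ in $V$ and $Y$, both the induction and the choice of a minimal counterexample are legitimate in $\mathsf{RCA}_0$. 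The delicate point is precisely this inductive step, where the $\Gamma_i$ need not be monotone: one must use that in the MedSalem--Tanaka construction each operator is applied positively to the accumulated stages, so that producing $m$ from a subset of $Y$, together with $\Gamma_i(Y)=Y$, does force $m\in Y$. Pinning down this property is what makes the canonical inductive set the least fixed point, and it is the main obstacle; everything else is bookkeeping. Note finally that for the downstream use in Lemma \ref{lem::from-induction-to-seq-of-models} only the existence of a common fixed point is needed, so the minimality clause, while established here, is not on the critical path.
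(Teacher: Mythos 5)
Your proposal takes essentially the same route as the paper's (two-sentence) proof: the set produced by $[\Sigma^1_1]^n$-$\mathsf{ID}$ already codes the desired simultaneous fixed point together with stage bookkeeping, and one simply discards the stage data. The additional work you do on minimality is where the only real subtlety lives --- for non-monotone $\Gamma_i$ the inductive step ``$m\in\Gamma_i(S)$, $S\subseteq Y$, $\Gamma_i(Y)=Y$, hence $m\in Y$'' does not follow, and you flag this obstacle without actually closing it --- but, as you correctly observe (and as the paper itself notes right after the lemma), only the \emph{existence} of a common fixed point is used in Lemma \ref{lem::from-induction-to-seq-of-models}, so this does not affect the downstream argument.
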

\begin{proof}
    When obtaining the least simultaneous fixed-point of the operators $\Gamma_0, \dots, \Gamma_n$ by $\forall n.[\Sigma^1_1]^n$-$\mathsf{ID}$ we also register when each point enters the fixed-point.
    We only need to forget this information.
\end{proof}

\subsection{Warm-up}
As a warm up, we show that $[\Sigma^1_1]^3$-$\mathsf{LFP}$ implies $\psi_1(2,2)$, that is, we can use three $\Sigma^1_1$ operators $\Gamma_0,\Gamma_1,\Gamma_2$ to define coded models $M_0, M_1$ such that:
\[
    M_0 \subseteq_{\beta_2} M_1 \subseteq_\beta \groundmodel.
\]
The full version of Lemma \ref{lem::from-induction-to-seq-of-models} is on the next section.
As above, $\groundmodel$ is the fixed ground model.

A first {\em rough} idea of what $\Gamma_0$, $\Gamma_1$ and $\Gamma_2$ do is:
\begin{itemize}
    \item $\Gamma_0$ makes $M_0$ and $M_1$ $\beta$-submodels of the ground model.
    \item $\Gamma_1$ makes $M_0$ a $\beta_2$-submodel of $M_1$.
    \item $\Gamma_2$ puts $M_0$ inside $M_1$ as an element.
\end{itemize}
If we can define all of these operators, we are done.
The hardest operator to define correctly is $\Gamma_2$.
In order to do so, we will define auxiliary sets of `recipes' for making the sets in $M_0$ and $M_1$, and a copy $M_0^c$ of $M_0$ at convenient stages.
$M_0^c$ is a technical artifice used to guarantee $M_0\in M_1$.

We will have three kinds of recipes:
\begin{itemize}
    \item Recipes for applications of comprehension will have the form $\langle \mathtt{comp}, e, \bar j\rangle$ where $e$ is an index number and $\bar j$ are set parameters.
    \item Recipes where we copy an element of $M_1$ to $M_0$ will have the form $\langle \mathtt{subm}, e\rangle$ where $e$ is the index of some set in $M_1$ such that $M_1\models \forall Z.\theta((M_1)_e, Z)$, $M_0 \not\models \exists Y \forall Z \theta(Y,Z)$ and $\theta$ is some arithmetical formula.
    \item Recipes for putting $M_0^c$ inside $M_1$ are of the form $\langle \mathtt{elem}, e\rangle$ where $e$ is some index.
\end{itemize}
Each of $\mathtt{comp}, \mathtt{subm}$ and $\mathtt{elem}$ are arbitrarily choosen (pairwise different) natural numbers.
Each recipe will be the label for the set that it constructs, {\em i.e.}, the recipe $\rho$ instructs us how to define the set $(M_i)_\rho$.

We now describe the recipes one application of each operator constructs:
\begin{itemize}
    \item $\Gamma_0$: if $e\in \mathbb{N}$ and $\bar s$ is a finite sequence of elements of $M_i$, then $\langle \mathtt{comp}, e, \bar s\rangle$ is an recipe for $M_i$.
    \item $\Gamma_1$: if $\theta$ is an arithmetic formula with parameters $Y,Z$, $M_0 \not\models \exists Y \forall Z \theta(Y,Z)$ and $e$ is the least such that $M_1\models \forall Z \theta((M_1)_e,Z)$, then $\langle \mathtt{subm}, e\rangle$ is an recipe for $M_0$.
    \item $\Gamma_2$: if $e$ is the least such $\exists i \in (M_0)_e$ and $\neg \exists i\in (M_0^c)_e$, then $\langle \mathtt{elem}, e \rangle$ is an recipe for $M_1$.
\end{itemize}
These recipes will guarantee that $M_0$ and $M_1$ have the closures we want.

Now we describe how $\Gamma_0$ follows the recipes to create our sets:
\begin{itemize}
    \item If $\rho = \langle \mathtt{comp}, e, \bar j\rangle$ is a recipe for $M_i$, then $n\in (M_i)_\rho$ iff $\varphi(e, n, \overline{(M_i)_s}, X)$, where $\varphi$ is a universal lightface $\Sigma^1_1$-formula.
    \item If $\rho = \langle \mathtt{subm}, e\rangle$ is a recipe for $M_0$, then $n\in (M_0)_\rho$ iff $n\in (M_1)_e$.
    \item If $\rho = \langle \mathtt{elem}, e\rangle$ is a recipe for $M_1$, then $n\in (M_1)_\rho$ iff $n\in M_0^c$.
\end{itemize}
We will also require that the set made by each recipe is made only once.
Note that $\Gamma_0$ at the same time creates and follows recipes.
$\Gamma_0$ is a $\Sigma^1_1$-operator.

$\Gamma_1$ is a $\Sigma^1_1$-operator which adds new recipes for copying members of $M_1$ into $M_0$.
At last, $\Gamma_2$ is a $\Sigma^1_1$-operator which copies the current $M_0$ into the candidate $M_0^c$ and creates a recipe for copying the new $M_0^c$ into $M_1$.
This only adds elements to the old copy, so this is not problematic.

Let $X = (M_0, M_0^r, M_0^c, M_1, M_1^r)$, then:
\begin{itemize}
    \item if $X$ is a fixed-point of $\Gamma_0$, then $M_0\subseteq_\beta M_1 \subseteq_\beta \groundmodel$;
    \item if $X$ a fixed-point of $\Gamma_1$, then $M_0\subseteq_{\beta_2} M_1$; and
    \item if $X$ is a fixed-point of $\Gamma_2$, then $M_0\in M_1$.
\end{itemize}

\subsection{Multiple Induction}
In this section we show that $\forall n.[\Sigma^1_1]^n$-$\mathsf{LFP}$ implies $\forall n.\psi_1(2,n)$.
Fix $A\subseteq \mathbb{N}$ and $n\in \mathbb{N}$ such that $n\geq 1$.
We define a sequence of sets
\begin{align*}
     A \in \;&Y_0 \; \in  \;\;\cdots \; \in \;\;\; Y_n, \\
           &Y_0 \subseteq_{\beta_i}\cdots \subseteq_{\beta_i} Y_n \subseteq_{\beta_e} \groundmodel.
\end{align*}
using $2n-1$ $\Sigma^1_1$-inductive operators $\Gamma_0, \dots, \Gamma_{2n-2}$.

Write $\mathtt{comp}$, $\mathtt{subm}$ and $\mathtt{elem}$ for $0$, $1$ and $2$, respectively.
A tuple $\rho$ of natural numbers is a recipe iff there is a natural number $n$ and a natural number $j$ such that
\begin{itemize}
    \item $\rho = \tuple{\mathtt{comp}, e, j}$; or
    \item $\rho = \tuple{\mathtt{subm}, e}$; or
    \item $\rho = \tuple{\mathtt{elem}, e}$.
\end{itemize}
$\mathtt{comp}$ recipes are for closure under $\Pi^1_1$-comprehension, $\mathtt{subm}$ recipes are for making each model be a $\beta_2$-submodel of the next model, and $\mathsf{include}$ recipes are for putting a copy of each model into the next model.
From now on write $M_i$ for $3n$, $M_i^r$ for $3n+1$ and $M_i^c$ for $3n+2$.
Given a set $X$, we write $M^X_i$ for $(X)_{M_i}$.

The operator $\Gamma_0$ creates the recipes of the form $\tuple{\mathtt{comp}, e, j}$, with $j$ being the index of some non-empty set in the respective model.
$\Gamma_0$ also makes al the currently not-made-yet recipes.
As $\Gamma_0$ both creates and makes the recipes for closure under $\Pi^1_1$-comprehension, we can show that the models defined by a fixed-point of $\Gamma_0$ are coded $\beta$-models.
\begin{definition}
    $\Gamma_0$ is the $\Sigma^1_1$-inductive operator defined by:
    \begin{align*}
        x\in \Gamma_0(X) \iff
        [&x = \tuple{M_i^r, \tuple{\mathtt{comp}, e, s}}
             \land \forall j<\mathrm{lh}(s) \exists m.m\in (M^X_i)_{s_j}]\;
             \lor \\
        [&x = \tuple{M_i, \tuple{\tuple{\mathtt{comp}, e, s}, m}}
            \land \pi^1_1(e,m, (M^X_i)_{s}, A)
            \land \tuple{\mathtt{comp}, e, s}\in M^{r,X}_i]\;
            \lor \\
        [&x = \tuple{M_i, \tuple{\tuple{\mathtt{subm}, e}, m}}
            \land \tuple{e,m}\in M^X_{i+1}
            \land \tuple{\mathtt{subm}, e}\in M^{r,X}_i]\;
            \lor \\
        [&x = \tuple{M_{i+1}, \tuple{\tuple{\mathtt{elem}, e}, m}}
            \land \tuple{M_i^c,m}\in X
            \land \tuple{\mathtt{elem}, e}\in M^{r,X}_{i+1}\\&
            \land \neg \exists m.m\in (M^X_{i+1})_\tuple{\mathtt{elem}, e}].
    \end{align*}
    Here $\pi^1_1$ is a universal lightface $\Pi^1_1$ formula.
\end{definition}
\begin{lemma}
    \label{lem::gamma-0-is-ok}
    If $X$ is a fixed-point of $\Gamma_0$ and $i\in \mathbb{N}$, then $M_i^X$ is a coded $\beta$-model and $A\in M_i^X$.
\end{lemma}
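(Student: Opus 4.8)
The plan is to extract from the fixed-point condition that each $M^X_i$ is a coded $\omega$-model closed under $\Pi^1_1$-comprehension \emph{as evaluated in $\groundmodel$}, and then to invoke the standard fact that such closure is precisely the $\beta$-property. Throughout I work over $\mathsf{ACA}_0$ inside the ground model $\groundmodel$ in which the least simultaneous fixed-point $X$ lives. First I would record that $A\in M^X_i$ for every $i$. Since $\pi^1_1$ is a universal lightface $\Pi^1_1$-formula and $A$ occurs as a designated parameter in the $\mathtt{comp}$-clause, there is an index $e_A$ such that $\pi^1_1(e_A,m,\tuple{\,},A)\leftrightarrow m\in A$ holds in $\groundmodel$. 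The recipe $\tuple{\mathtt{comp},e_A,\tuple{\,}}$ has empty parameter list, so the side condition $\forall j<\mathrm{lh}(s)\,\exists m.\,m\in(M^X_i)_{s_j}$ of the first disjunct is vacuous; hence at the fixed point this recipe lies in $M^{r,X}_i$, and the second disjunct then forces $(M^X_i)_{\tuple{\mathtt{comp},e_A,\tuple{\,}}}=A$, giving $A\in M^X_i$.

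Next I would verify closure under comprehension. Fix an index $e$ and a tuple $s$ whose entries index nonempty sets of $M^X_i$. Because $X$ is a fixed point, the first disjunct places $\tuple{\mathtt{comp},e,s}$ into $M^{r,X}_i$, and then the second disjunct makes $(M^X_i)_{\tuple{\mathtt{comp},e,s}}$ equal to $\{m:\pi^1_1(e,m,(M^X_i)_s,A)\}$, where the $\Pi^1_1$-matrix is read off in $\groundmodel$. Letting $e$ and $s$ range over all indices and all parameter tuples from $M^X_i$, and using that $\pi^1_1$ is universal, this shows that $M^X_i$ contains, for every finite list of its own members and every $\Pi^1_1$ (hence every arithmetical) formula $\chi$, the set $\{m:\groundmodel\models\chi(m,\bar P)\}$. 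Empty parameters cause no gap: a $\Pi^1_1$ formula with an empty-set argument is equivalent to one with that slot deleted (replace each atom ``$k\in P$'' by a false atom), which is realized by some comp-recipe with shorter parameter list. In particular $M^X_i\models\mathsf{ACA}_0$ and $M^X_i$ is closed under $\Pi^1_1$-comprehension evaluated in $\groundmodel$.

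Finally I would deduce the $\beta$-property. Given a $\Sigma^1_1$ sentence $\exists Y\,\theta(Y,P)$ with $P\in M^X_i$ and $\groundmodel\models\exists Y\,\theta(Y,P)$, I apply the normal form theorem (available in $\mathsf{ACA}_0$) to replace it by the ill-foundedness of a tree $T_P$ arithmetical in $P$; then $T_P\in M^X_i$, and its leftmost path $L$ is $\Pi^1_1$ in $T_P$, so $L\in M^X_i$ by the comprehension closure just established. Since $\theta$ is arithmetical and $M^X_i$ is an $\omega$-model, $L$ witnesses $\exists Y\,\theta(Y,P)$ inside $M^X_i$; combined with the automatic upward absoluteness of $\Sigma^1_1$ statements this yields $M^X_i\models\varphi\iff\groundmodel\models\varphi$ for all $\Pi^1_1$ (equivalently $\Sigma^1_1$) $\varphi$ with parameters in $M^X_i$. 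Thus $M^X_i$ is a coded $\beta$-model containing $A$.

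The main obstacle I expect is not any single computation but the bookkeeping of the two-stage ``create the recipe, then execute it'' mechanism: one must check that at a genuine fixed point no instance of comprehension is missed, so that the nonemptiness side condition together with the elimination of empty parameters really does leave full comprehension closure intact, and that the $\Pi^1_1$-matrix in the $\mathtt{comp}$-clause is interpreted in $\groundmodel$ rather than inside $M^X_i$---this is exactly the point that upgrades ``model of $\mathsf{ACA}_0$'' to ``$\beta$-model''. The leftmost-path computation and the equivalence between $\Pi^1_1$-comprehension-relative-to-$\groundmodel$ and the $\beta$-property are standard (cf.\ Simpson \S VII.2), so I would cite them rather than reprove them.
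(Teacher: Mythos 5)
Your proof is correct and follows essentially the same route as the paper's: exploit the fixed-point property of the create-recipe/execute-recipe mechanism to get closure of $M_i^X$ under $\Pi^1_1$ definability over $\groundmodel$ (with $A$ itself obtained from a $\mathtt{comp}$-recipe with empty parameter list), and then invoke the standard characterization of coded $\beta$-models --- the paper phrases this as closure under hyperjump and cites the equivalence, while you unfold the leftmost-path argument that proves that same equivalence. One small imprecision: the leftmost path is not literally $\Pi^1_1$ in $T_P$; it is computable from the $\Pi^1_1$-in-$T_P$ set $\{\sigma : T_\sigma \text{ is well-founded}\}$, so you need one further round of $\mathtt{comp}$-recipes (harmless, since the fixed point absorbs iterated applications of $\Gamma_0$, exactly as in the paper's $\Gamma_0(\Gamma_0(X))=X$ step).
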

\begin{proof}
    Suppose that $X$ is a fixed-point of $\Gamma_0$.
    Fix $i\in\mathbb{N}$ and $A\in M_i^X$.
    The hyperjump of $A$ is $\mathrm{HJ}(A) = \{\tuple{n,e} \mid \exists f.\pi^0_1(e,n,f,X)\}$, which is $\Pi^1_1$ relative to $A$ ($\pi^0_1$ is a universal lightface $\Pi^0_1$ formula).
    So we can define a recipe $\rho$ for $\mathrm{HJ}(A)$.
    Therefore $\rho\in M^{r,\Gamma_0(X)}_i$ and $\mathrm{HJ}(A)\in M^{\Gamma_0(\Gamma_0(X))}_i$.
    But $\Gamma_0(\Gamma_0(X)) = X$, so $\mathrm{HJ}(A)\in M^X_i$.
    Therefore $M^X_i$ is closed under hyperjumps, and thus is a coded $\beta$-model.

    As $\{a \mid a\in A\}$ is $\Pi^1_1$ with parameter $A$, we can similarly show that $A\in M^X_i$.
\end{proof}

$\Gamma_{2i+1}$ creates recipes to copy sets to $M_i^X$ from $M_{i+1}^X$, so that the former can become a $\beta_2$-submodel of the latter after one application of $\Gamma_0$.
If $M^X_i \subseteq_{\beta_2} M^X_{i+1}$, $\Gamma_{2i+1}$ does nothing.
\begin{definition}
    $\Gamma_{2i+1}$ is the $\Sigma^1_1$-inductive operator defined below:
    \begin{align*}
        x \in \Gamma_{2i+1}(X) \iff
        &x = \tuple{M_i^r, \tuple{\mathtt{subm},e}}  \\
        &\exists e_0 \exists s[e = \mu e.M_{i+1}^X\models \pi^1_1(e_0,(M_{i+1})_e^X,(M_i)^X_s)\\
        &\land M_i^X\not\models\exists Y\pi^1_1(e_0,Y, (M_i)^X_s)].
    \end{align*}
\end{definition}
\begin{lemma}
    \label{lem::gamma-2i+1-is-ok}
    Let $i\in\mathbb{N}$.
    If $X$ is a fixed-point of $\Gamma_{2i+1}$, then $M_i^X\subseteq_{\beta_2}M_{i+1}^X$.
\end{lemma}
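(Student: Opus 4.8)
The plan is to unfold the definition of $\beta_2$-submodel as $\Sigma^1_2$-absoluteness and verify both directions, with the substantive work concentrated in the downward direction, where the fixed-point property of $\Gamma_{2i+1}$ is used. Recall that $M_i^X \subseteq_{\beta_2} M_{i+1}^X$ means that for every $\Sigma^1_2$-formula $\varphi$ with parameters in $M_i^X$ we have $M_i^X \models \varphi \iff M_{i+1}^X \models \varphi$. By the normal form theorem such a $\varphi$ may be written as $\exists Y.\pi^1_1(e_0, Y, (M_i)_s^X)$ for the fixed universal lightface $\Pi^1_1$-formula $\pi^1_1$, some index $e_0$, and a parameter $(M_i)_s^X$ coding the tuple of parameters. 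I will use throughout that, since $X$ is also a fixed-point of $\Gamma_0$, both $M_i^X$ and $M_{i+1}^X$ are coded $\beta$-models and $M_i^X$ is a submodel of $M_{i+1}^X$ (Lemma \ref{lem::gamma-0-is-ok}); consequently any $\Pi^1_1$- or $\Sigma^1_1$-statement with parameters from $M_i^X$ has the same truth value in $M_i^X$, in $M_{i+1}^X$, and in $\groundmodel$.

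For the upward direction, suppose $M_i^X \models \exists Y.\pi^1_1(e_0, Y, (M_i)_s^X)$, witnessed by some $W \in M_i^X$. Since $M_i^X$ is a submodel of $M_{i+1}^X$, we have $W \in M_{i+1}^X$, and by $\Pi^1_1$-absoluteness $M_{i+1}^X \models \pi^1_1(e_0, W, (M_i)_s^X)$; hence $M_{i+1}^X \models \exists Y.\pi^1_1(e_0, Y, (M_i)_s^X)$.

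The downward direction is the heart of the matter. Suppose towards a contradiction that $M_{i+1}^X \models \exists Y.\pi^1_1(e_0, Y, (M_i)_s^X)$ while $M_i^X \not\models \exists Y.\pi^1_1(e_0, Y, (M_i)_s^X)$. Then some index witnesses the existential in $M_{i+1}^X$; taking the least such $e$, the defining condition of $\Gamma_{2i+1}$ is met, so $\Gamma_{2i+1}(X)$ contains the recipe $\tuple{\mathtt{subm}, e}$ placed in $M_i^{r}$. As $X$ is a fixed-point of $\Gamma_{2i+1}$, this recipe already lies in $M_i^{r,X}$; and as $X$ is a fixed-point of $\Gamma_0$, the corresponding $\mathtt{subm}$-clause of $\Gamma_0$ is executed, so that the set $W := (M_{i+1})_e^X$ is copied into $M_i^X$ as $(M_i)_{\tuple{\mathtt{subm},e}}^X$. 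By $\Pi^1_1$-absoluteness between the $\beta$-models $M_{i+1}^X$ and $M_i^X$ (with $W$ and $(M_i)_s^X$ in both), from $M_{i+1}^X \models \pi^1_1(e_0, W, (M_i)_s^X)$ we obtain $M_i^X \models \pi^1_1(e_0, W, (M_i)_s^X)$, whence $M_i^X \models \exists Y.\pi^1_1(e_0, Y, (M_i)_s^X)$ --- contradicting our assumption. Combining the two directions yields full $\Sigma^1_2$-absoluteness, i.e.\ $M_i^X \subseteq_{\beta_2} M_{i+1}^X$.

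I expect the main obstacle to be bookkeeping rather than conceptual: verifying that the witness copied down by the $\mathtt{subm}$-recipe is genuinely recognized as a witness inside $M_i^X$. This hinges on the $\beta$-model absoluteness of $\pi^1_1$ --- so that merely copying the set suffices and we need not re-verify the $\Pi^1_1$-matrix internally --- and on the fact that the same index $e_0$ and parameter $(M_i)_s^X$ are used on both sides. A secondary point to record cleanly is the use of $M_i^X \subseteq M_{i+1}^X$ in the upward direction, which comes from the inclusion machinery together with the $\beta$-model property and is available because, in the simultaneous least fixed point $X$, the object $X$ is a fixed-point of all the operators, not only of $\Gamma_{2i+1}$.
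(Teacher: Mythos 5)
Your proof is correct and follows essentially the same route as the paper's, which compresses the whole argument into a single sentence (``otherwise $X$ would not be a fixed-point of $\Gamma_{2i+1}$''). In fact your version is more careful than the paper's: you make explicit that the downward direction also needs $X$ to be a fixed point of $\Gamma_0$ --- so that the $\mathtt{subm}$-recipe is actually executed, the witness is copied into $M_i^X$, and the $\beta$-model absoluteness of $\pi^1_1$ is available --- a dependency that the lemma's stated hypothesis and the paper's one-line proof leave implicit and that is only supplied later when the lemma is applied to a simultaneous fixed point of all the operators.
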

\begin{proof}
    Let $X$ be a fixed-point of $\Gamma_{2i+1}$ and $\varphi$ be a $\Pi^1_2$ sentence with parameters in $M_i^X$.
    If $M_{i+1}^X\models \varphi$ then $M_i^X\models \varphi$, as otherwise $X$ would not be a fixed-point of $\Gamma_{2i+1}$.
\end{proof}

$\Gamma_{2i+2}$ checks if there is any difference between $M_i^X$ and $M_{i}^{c,X}$, and adds a new recipe for $M_i$ if that is the case.
$\Gamma_{2i+2}$ simultaneously copies $M_i^X$ over $M_i^{c,X}$.
After one application of $\Gamma_{2i+2}$ and one of $\Gamma_0$, we get that $M_i^X$ is an element of $M_{i+1}^{(\Gamma_0(\Gamma_{2i+2}(X)))}$.
\begin{definition}
    $\Gamma_{2i+2}$ is the $\Sigma^1_1$-inductive operator defined below:
    \begin{align*}
        x\in \Gamma_{2i+2}(X) \iff
        [&x = \tuple{M_i^r, \tuple{\mathtt{elem},e}} \\
        &\land \exists m.\tuple{e,m}\in M_{i}^X
        \land \tuple{e,m}\not \in M_{i}^{c, X} \\
        &\land \forall e'<e\forall m.\tuple{e',m}\in M_{i+1}^X \leftrightarrow \tuple{e',m} \in M_{i+1}^{c,X}]\;
        \lor \\
        [&x = \tuple{M_i^c, m} \land \tuple{M_i, m} \in X]
    \end{align*}
\end{definition}
\begin{lemma}
    \label{lem::gamma-2i+2-is-ok}
    For any fixed-point $X$ of $\Gamma_0$, $M_i^X\in M_{i+1}^{(\Gamma_0(\Gamma_{2i+2}(X)))}$.
\end{lemma}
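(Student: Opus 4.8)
The plan is to reduce the claim to exhibiting a single index $\rho$ of the form $\tuple{\mathtt{elem}, e}$ for which the column $(M_{i+1}^{Z})_{\rho}$ is literally the set coding $M_i^X$, where I abbreviate $Y = \Gamma_{2i+2}(X)$ and $Z = \Gamma_0(Y)$. By the definition of membership for coded models, ``$M_i^X \in M_{i+1}^Z$'' unwinds to the existence of some $\rho$ with $(M_{i+1}^Z)_\rho = M_i^X$, so producing such a $\rho$ and checking the equality finishes the proof. The $\beta$-model content is not needed here; it is supplied separately by Lemma~\ref{lem::gamma-0-is-ok}.

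First I would analyse the single application $Y = \Gamma_{2i+2}(X)$. Its second disjunct copies $M_i^X$ into the copy slot, so that $M_i^{c,Y}$ agrees with $M_i^X$; since the copy slot only ever receives elements already present in $M_i$, at the $\Gamma_0$-fixed point $X$ this copy is exactly $M_i^X$. Its first disjunct, provided $M_i^X$ and $M_i^{c,X}$ are not already identical, deposits a recipe $\rho = \tuple{\mathtt{elem}, e}$ into the recipe slot for the $(i+1)$-st model, where $e$ is the least column at which $M_i^X$ and $M_i^{c,X}$ differ. I would also record that this $\rho$ is genuinely fresh, i.e.\ that $(M_{i+1}^Y)_\rho$ is still empty, which is immediate because $\Gamma_{2i+2}$ writes nothing into the $M_{i+1}$ model slot itself.

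Next I would apply $\Gamma_0$ to $Y$ and verify that its $\mathtt{elem}$ clause fires for $\rho$. The three side conditions of that clause are exactly the facts just established: the recipe $\rho$ lies in the recipe slot of $M_{i+1}$, the freshness guard $\neg\exists m.\,m\in (M_{i+1}^Y)_\rho$ holds, and the members to be copied are read off from $M_i^{c,Y} = M_i^X$. Hence $\Gamma_0$ populates $(M_{i+1}^Z)_\rho$ with precisely the elements of $M_i^{c,Y} = M_i^X$, giving $(M_{i+1}^Z)_\rho = M_i^X$ and therefore $M_i^X \in M_{i+1}^Z$.

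The step I expect to be the main obstacle is controlling the copy slot so that the realized column is \emph{exactly} $M_i^X$, together with the degenerate case in which $M_i^X$ and $M_i^{c,X}$ already coincide. In that case the first disjunct of $\Gamma_{2i+2}$ produces no new recipe, and one must instead argue that a recipe realizing $M_i^X$ was already generated at the stage at which the copy last caught up with $M_i$. This is precisely the situation that occurs at the eventual simultaneous fixed point $W$ of all the operators, where $\Gamma_0(\Gamma_{2i+2}(W)) = W$, so that the conclusion specialises to $M_i^W \in M_{i+1}^W$ as needed for Lemma~\ref{lem::from-induction-to-seq-of-models}. Making sure the accumulation in the copy slot never leaves stale elements that would inflate the realized column beyond $M_i^X$, and handling this coincidence case, is where the care is required.
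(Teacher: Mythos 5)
Your proposal is correct and follows essentially the same route as the paper: a single application of $\Gamma_{2i+2}$ creates a fresh $\tuple{\mathtt{elem},e}$ recipe and syncs the copy slot so that $M_i^{c,\Gamma_{2i+2}(X)}=M_i^X$, and the subsequent application of $\Gamma_0$ realizes that recipe inside $M_{i+1}$. The degenerate case $M_i^X=M_i^{c,X}$ that you flag as needing care is exactly the first case of the paper's own (equally terse) case split, where one appeals to a previously created recipe already realized in $M_{i+1}^X$ because $X$ is a fixed point of $\Gamma_0$.
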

\begin{proof}
    Fix $X$ and $i\in\mathbb{N}$.
    Then either $M^X_i = M^{c,X}_i$ or $M^X_i \neq M^{c,X}_i$.
    If $M^X_i = M^{c,X}_i$, there is a recipe $\tuple{include, e}\in M_{i+1}^{r,X}$ and as $X$ is a fixed-point of $\Gamma_0$, $M^{c,X}_i\in M^X_{i+1}$.
    If $M^X_i \neq M^{c,X}_i$, then there is $e$ such that $\tuple{\mathtt{elem},e}\in M_{i+1}^{r,\Gamma_{2i+2}(X)}\setminus M_{i+1}^{r,X}$.
    We also have $M^X_i = M^{c,\Gamma_{2i+2}(X)}_i$.
    Therefore $M^X_i \in M^{\Gamma_0(\Gamma_{2i+2}(X))}_{i+1}$.
\end{proof}

\begin{proof}[Proof of Lemma \ref{lem::from-induction-to-seq-of-models}]
    Fix $k\geq 1$.
    Suppose $\forall n.[\Sigma^1_1]^n$-$\mathsf{ID}$ holds.
    In particular, $[\Sigma^1_1]^{2k-1}$-$\mathsf{LFP}$ holds.
    Let $X$ be a sumultaneous fixed-point of the operators $\Gamma_0, \dots, \Gamma_{2k-2}$ defined above.
    By Lemmas \ref{lem::gamma-0-is-ok}, \ref{lem::gamma-2i+1-is-ok} and \ref{lem::gamma-2i+2-is-ok},
    \begin{align*}
     A \in \;&M^X_0 \; \in  \;\;\cdots \; \in \;\;\; M^X_n, \\
             &M^X_0 \subseteq_{\beta_i}\cdots \subseteq_{\beta_i} M^X_n \subseteq_{\beta_e} \groundmodel.
    \end{align*}
\end{proof}

\section{Determinacy of differences of $\Pi^0_3$ sets and $\Pi^1_3$-reflection for $\mathsf{Z}_2$}\label{section::the_z2_case}
In \cite{montalban2012limits}, Montalb{\'a}n and Shore showed that $\mathsf{Z_2}$ proves the determinacy of differences of (standard) finitely many $\Pi^0_3$ sets:
\begin{theorem}[\cite{montalban2012limits}*{Theorem 1.1}]
    For every $m\geq 1$, $\Pi^1_{m+2}$-$\mathsf{CA}_0$ proves $(\Pi^0_3)_m$-$\mathsf{Det}$.
\end{theorem}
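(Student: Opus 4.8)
The plan is to prove $(\Pi^0_3)_m$-$\mathsf{Det}$ from $\Pi^1_{m+2}$-$\mathsf{CA}_0$ by induction on $m\geq 1$, with the key technical device being a reduction that strips one level off the difference hierarchy at the cost of one level of comprehension. Since $(\Pi^0_3)_m$ and $(\Sigma^0_3)_m$ differ only by complementation (which corresponds to swapping the two players), it suffices to handle $(\Sigma^0_3)_m$-games, which connects this theorem to item (4) of Theorem \ref{thm::main_theorem}. The base case $m=1$ amounts to $\Sigma^0_3$-$\mathsf{Det}$ (equivalently $\Pi^0_3$-$\mathsf{Det}$) from $\Pi^1_3$-$\mathsf{CA}_0$, which is a known Montalb\'an--Shore result and may be cited directly.

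First I would set up the unraveling machinery. The standard approach to analyzing $\Sigma^0_3$ (equivalently $\Pi^0_3$) payoff is to replace the original game $G$ on a difference of $\Pi^0_3$ sets by an auxiliary game $\tilde G$ on a tree of larger (but still second-order-definable) objects, in which the payoff condition has strictly lower descriptive complexity — one would pass from a $(\Pi^0_3)_m$ condition to something of the form $(\Pi^0_3)_{m-1}$ augmented by an extra closed/open requirement coming from the unraveling. The point of the auxiliary moves is that players commit to witnesses for the innermost $\Pi^0_3$ (hence $\forall\exists\forall$) clauses as they play, so that satisfaction of that clause becomes a closed condition on the extended play. Determinacy of $\tilde G$ is then obtained from the induction hypothesis applied at the lower difference level, and a strategy for $\tilde G$ is pushed back down to a strategy for $G$ by projection; the covering/unraveling property guarantees that the projected strategy is still winning.

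The bookkeeping of the descriptive complexity is where I would be most careful, since the statement asks for exactly $\Pi^1_{m+2}$-$\mathsf{CA}_0$ at difference level $m$: each reduction in the difference rank must cost precisely one jump in comprehension strength, so that the induction's use of the hypothesis at level $m-1$ together with the additional comprehension needed to \emph{form} the unraveled tree and extract the projected strategy lands exactly at $\Pi^1_{m+2}$-$\mathsf{CA}_0$. Formalizing the unraveling inside second-order arithmetic requires that the tree of auxiliary positions, and the relevant winning-strategy objects, be provably constructible; the comprehension axiom at the appropriate level is what supplies these sets. I expect the main obstacle to be precisely this complexity calculus: verifying that the coding of the auxiliary game and the transfer of strategies can be carried out uniformly within $\Pi^1_{m+2}$-$\mathsf{CA}_0$, rather than needing strictly more comprehension, and confirming that the inductive step genuinely consumes only one level of the difference hierarchy per one level of comprehension.
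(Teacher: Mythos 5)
First, a point of order: the paper does not prove this statement. It is quoted directly from Montalb\'an and Shore \cite{montalban2012limits} and used as a black box; the paper only asserts later (Corollary \ref{cor::formalizing_montalban--shore}) that the proof \emph{formalizes} in $\mathsf{ACA}_0$. So there is no in-paper proof to compare yours against, and I can only assess your sketch against the actual Montalb\'an--Shore argument.

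Your outline has the right global shape --- induction on $m$, one level of comprehension paid per level of the difference hierarchy, base case $m=1$ treated separately --- but the mechanism you propose for the inductive step is a genuine gap, not deferred bookkeeping. You propose to \emph{unravel} the innermost $\Pi^0_3$ clause by having players commit to witnesses so that ``satisfaction of that clause becomes a closed condition on the extended play.'' For a $\Pi^0_3$ condition $\forall k\exists n\forall j\,R$ this cannot work as stated: infinitely many witnesses must be declared over the course of the play, so the induced condition on the extended play is at best $\Pi^0_2$, not closed; and, more seriously, the covering property (that a winning strategy in the auxiliary game projects back to a winning strategy in the original game) is exactly where unraveling consumes enormous set-existence strength --- even unraveling closed sets in Martin's sense requires forming auxiliary trees of higher type that are not available in second-order arithmetic, which is why Borel determinacy escapes $\mathsf{Z}_2$. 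Montalb\'an and Shore do not unravel. Their proof adapts Martin's direct proof of $\Delta^0_4$-determinacy: assuming Player $\mathsf{I}$ has no winning strategy, one iteratively constructs non-losing quasi-strategies for Player $\mathsf{II}$ by analysing the top $\Pi^0_3$ set as an intersection of $\Sigma^0_2$ sets, and it is the definability of these quasi-strategies and of the associated winning regions that costs one comprehension level per difference level. Your sketch defers everything to ``the complexity calculus,'' but the construction that the calculus is supposed to measure is the missing content. Note also that citing the base case as ``a known Montalb\'an--Shore result'' is citing the $m=1$ instance of the very theorem at hand; in a self-contained proof it would need its own (already substantial) argument.
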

\noindent A reversal is not possible, as MedSalem and Tanaka \cite{medsalem2007delta03} showed that $\Delta^1_1$-$\mathsf{Det}$ does not prove $\Delta^1_2$-$\mathsf{CA}_0$.
Monalb{\'a}n and Shore showed the following:
\begin{theorem}[\cite{montalban2014limits}*{Theorem 1.10.5}]
    Let $m\geq 1$ and $X\subseteq\mathbb{N}$, then $(\Pi^0_3)_{m}$-$\mathsf{Det}$ proves the existence of a $\beta$-model $\mathcal{M}$ of $\Delta^1_{m}$-$\mathsf{CA}_0$ with $X\in \mathcal{M}$.
\end{theorem}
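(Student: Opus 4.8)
The plan is to follow the model-extraction technique of Montalbán and Shore, whose shape parallels the proof of Proposition~\ref{prop::sigma01det-n_implies_seq_of_beta-models}: design a game in which one player is forced to commit to a countable coded model and defend the claim that it is a $\beta$-model of $\Delta^1_m$-$\mathsf{CA}_0$, let the opponent challenge individual defenses, argue that the opponent has no winning strategy, and then read the desired model off the winning strategy that $(\Pi^0_3)_m$-$\mathsf{Det}$ supplies.

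Concretely, I would let Player~I pose questions --- membership queries ``$k\in(\mathcal{M})_e$?'' and instances of $\Delta^1_m$-comprehension --- and let Player~II (the \emph{builder}) answer each, thereby committing to a coded model $\mathcal{M}$ with $X\in\mathcal{M}$. Player~II must defend its answers by producing $\beta$-correctness data (well-foundedness certificates for the relevant $\Sigma^1_1$ trees) and, for each comprehension instance, a matched pair of a $\Sigma^1_m$- and a $\Pi^1_m$-witness that the two definitions agree; Player~I may then dispute any defense, at which point the players descend one quantifier block, the prover/refuter roles switch, and the dispute continues. This nesting is where the difference hierarchy enters: settling a $\Delta^1_m$ claim needs at most $m$ role-reversals, and once the internal witnesses have been supplied each elementary step reduces to an arithmetic, in fact $\Pi^0_3$, condition on the play. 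Declaring Player~II the winner exactly when it survives every challenge yields a payoff set in $(\Pi^0_3)_m$, uniformly in $X$.

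With the game fixed I would show Player~I cannot win: against any purported winning strategy the builder answers every question truthfully, since the disputed assertions are genuine facts about the fixed background universe and the required $\beta$-absoluteness witnesses really exist there, so no honest challenge ever succeeds. By $(\Pi^0_3)_m$-$\mathsf{Det}$ Player~II therefore has a winning strategy $\sigma$, and querying $\sigma$ on each membership question reads off a coded model that is $\Sigma^1_1$-correct and closed under $\Delta^1_m$-comprehension, hence a $\beta$-model of $\Delta^1_m$-$\mathsf{CA}_0$ containing $X$.

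The main obstacle is the complexity bookkeeping, which is genuinely the heart of the argument. One must check that ``Player~II survives every challenge'' lies in $(\Pi^0_3)_m$ and no higher, forcing a careful normal-form reduction of the $\beta$-model satisfaction relation for $\Pi^1_m$-formulas to arithmetic conditions on the witnesses together with a proof that exactly $m$ difference levels suffice to adjudicate a $\Delta^1_m$ dispute; and one must verify that the model extracted from $\sigma$ satisfies \emph{full} $\Delta^1_m$-comprehension and not merely the instances Player~I happened to test. These are precisely the delicate points worked out in \cite{montalban2014limits}, so I would import that analysis rather than reprove it here.
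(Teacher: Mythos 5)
First, note that the paper does not prove this statement at all: it is quoted from Montalb\'an and Shore \cite{montalban2014limits} (their Theorem 1.10.5) and used as a black box; the only thing the paper adds is the observation that the proof formalizes in $\mathsf{ACA}_0$ (Corollary \ref{cor::formalizing_montalban--shore}). So there is no in-paper argument to match yours against, and a proposal that ends by importing ``that analysis rather than reprove it here'' ultimately reduces to the same citation the paper makes. Judged on what it actually supplies, the sketch has two problems.

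The first is a genuine logical gap in the step ``Player I cannot win.'' You argue that against any purported winning strategy for I the builder ``answers every question truthfully,'' but a truthful answer to the membership queries requires Player II to already have in hand a coded $\beta$-model of $\Delta^1_m$-$\mathsf{CA}_0$ containing $X$ --- which is exactly the object whose existence is being proved. The correct move, used both in Proposition \ref{prop::sigma01det-n_implies_seq_of_beta-models} and in the source, is a strategy-stealing/diagonal argument: if I had a winning strategy $\sigma$, II copies the answers $\sigma$ would give in the role-reversed branch, so that $\sigma$ would have to win on both sides of one and the same disputed assertion, a contradiction. Second, the two points you explicitly defer --- that the payoff lands exactly in $(\Pi^0_3)_m$ (in particular, that $\beta$-correctness can be adjudicated by a $\Pi^0_3$ condition on the play, which is not automatic since well-foundedness of a played tree is genuinely $\Pi^1_1$ unless the game forces one player to produce rank functions or descending sequences), and that the extracted model satisfies \emph{full} $\Delta^1_m$-comprehension rather than only the instances tested along the play --- are, as you say yourself, the entire mathematical content of the theorem. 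With the counter-strategy argument broken and both delicate points outsourced to \cite{montalban2014limits}, the proposal is an outline of a proof strategy rather than a proof.
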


While these two theorems have long and involved proofs, both can be formalized in a straightforward manner.
Denote by $\beta(\Delta^1_{m}\text{-}\mathsf{CA}_0)$ the sentence which states that for every $X$ there is a $\beta$-model $\mathcal{M}$ of $\Delta^1_{m}$-$\mathsf{CA}_0$ with $X\in \mathcal{M}$.
\begin{corollary}\label{cor::formalizing_montalban--shore}
    Formalizing in $\mathsf{ACA}_0$ the two theorems of Montalb{\'a}n and Shore above, we have:
    \begin{enumerate}
        \item $\mathsf{ACA}_0$ proves $\forall m.\bew{\Pi^1_{m+2}\text{-}\mathsf{CA}_0}{(\Pi^0_3)_m\text{-}\mathsf{Det}}$; and
        \item $\mathsf{ACA}_0$ proves $\forall m.\bew{(\Pi^0_3)_m\text{-}\mathsf{Det}}{\beta(\Delta^1_{m+2}\text{-}\mathsf{CA}_0)}$.
    \end{enumerate}
\end{corollary}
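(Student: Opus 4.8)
The plan is to reduce both items to the observation that a provability statement of the form $\forall m.\bew{T_m}{\varphi_m}$, where the theory $T_m$ and the sentence $\varphi_m$ are given primitive recursively in $m$, is itself an arithmetical statement (namely $\Pi^0_2$, with the inner $\bew{}{}$ being $\Sigma^0_1$), and hence is amenable to the arithmetical induction available in $\mathsf{ACA}_0$. Thus, rather than reproving the two theorems of Montalbán and Shore, I would only extract from their arguments the fact that the derivation at level $m+1$ is obtained from the derivation at level $m$ by a fixed, primitive-recursive modification, and then carry out an induction on $m$ at the level of the provability predicate.

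For item (1), I would first instantiate the first Montalbán--Shore theorem at $m=1$ to fix an explicit $\Pi^1_3$-$\mathsf{CA}_0$-derivation of $(\Pi^0_3)_1$-$\mathsf{Det}$; since $\bew{\Pi^1_3\text{-}\mathsf{CA}_0}{(\Pi^0_3)_1\text{-}\mathsf{Det}}$ is then a true $\Sigma^0_1$-sentence, it is provable already in $\mathsf{RCA}_0$, giving the base case. For the inductive step I would read off from their proof --- which runs by induction on the number of differences --- the primitive-recursive transformation sending a $\Pi^1_{m+2}$-$\mathsf{CA}_0$-derivation of $(\Pi^0_3)_m$-$\mathsf{Det}$ to a $\Pi^1_{m+3}$-$\mathsf{CA}_0$-derivation of $(\Pi^0_3)_{m+1}$-$\mathsf{Det}$ (the step invoking exactly one further level of comprehension). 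Verifying in $\mathsf{ACA}_0$ that this transformation always yields a valid derivation gives
\[
    \mathsf{ACA}_0 \vdash \forall m.\left(\bew{\Pi^1_{m+2}\text{-}\mathsf{CA}_0}{(\Pi^0_3)_m\text{-}\mathsf{Det}} \to \bew{\Pi^1_{m+3}\text{-}\mathsf{CA}_0}{(\Pi^0_3)_{m+1}\text{-}\mathsf{Det}}\right);
\]
here I would also use the routine monotonicity fact that, provably in $\mathsf{ACA}_0$, $\bew{\Pi^1_{m+2}\text{-}\mathsf{CA}_0}{\chi}$ implies $\bew{\Pi^1_{m+3}\text{-}\mathsf{CA}_0}{\chi}$, since the axioms of the weaker system are a definable subset of those of the stronger one. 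Applying $\Sigma^0_1$-induction to the base case and step inside $\mathsf{ACA}_0$ yields the conclusion.

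Item (2) follows the same template applied to the second Montalbán--Shore theorem: I would instantiate at the base level, extract the primitive-recursive step sending a $(\Pi^0_3)_m$-$\mathsf{Det}$-derivation of $\beta(\Delta^1_{m+2}\text{-}\mathsf{CA}_0)$ to a $(\Pi^0_3)_{m+1}$-$\mathsf{Det}$-derivation of $\beta(\Delta^1_{m+3}\text{-}\mathsf{CA}_0)$, and conclude by arithmetical induction. The relevant monotonicity now runs in the opposite direction: since $(\Pi^0_3)_{m+1}$-$\mathsf{Det}$ proves $(\Pi^0_3)_m$-$\mathsf{Det}$ (provably in $\mathsf{ACA}_0$, as the larger difference class contains the smaller), every $(\Pi^0_3)_m$-$\mathsf{Det}$-derivation is, provably, also a $(\Pi^0_3)_{m+1}$-$\mathsf{Det}$-derivation, which is what lets the induction proceed.

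The main obstacle is the word ``straightforward'': the two theorems being formalized have long and delicate proofs, so the real content is confirming that each is genuinely uniform in $m$ --- that the construction at level $m$ is the $m$-fold iterate of a single block, with no step depending non-recursively on $m$ and no appeal to reasoning external to the formal system --- so that the proof transformations above exist as primitive-recursive functions and their correctness is provable in $\mathsf{ACA}_0$. Once this uniformity is confirmed by inspection of \cite{montalban2012limits} and \cite{montalban2014limits}, the arithmetical-induction packaging is mechanical.
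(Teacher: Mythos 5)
Your proposal is correct and matches what the paper intends: the paper offers no detailed proof, merely asserting that the two Montalb{\'a}n--Shore theorems ``can be formalized in a straightforward manner,'' and your strategy --- observe that $\forall m.\bew{T_m}{\varphi_m}$ is arithmetical, exhibit a primitive-recursive transformation producing the derivation at level $m+1$ from the one at level $m$, and close the argument by $\Sigma^0_1$-induction inside $\mathsf{ACA}_0$ --- is exactly the standard way to cash out that assertion. You also correctly identify that the only genuine content is verifying the uniformity in $m$ of the original proofs, which is the same burden the paper silently discharges by citing the sources.
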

This corollary implies:
\begin{theorem}
    Over $\mathsf{ACA}_0$, $\forall n.(\Sigma^0_3)_n$-$\mathsf{Det}$ is equivalent to $\Pi^1_3$-$\refl(\mathsf{Z}_2)$.
\end{theorem}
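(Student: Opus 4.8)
The plan is to prove both implications over $\mathsf{ACA}_0$, leaning entirely on the formalized Montalb{\'a}n--Shore results of Corollary \ref{cor::formalizing_montalban--shore}. A preliminary reduction I would record first is that determinacy for the difference classes built over $\Sigma^0_3$ and over $\Pi^0_3$ coincides: since $A$ is $(\Sigma^0_3)_m$ exactly when its complement is $(\Pi^0_3)_m$, and a Gale--Stewart game with payoff $A$ is determined iff the game with payoff $A^c$ is (the players merely exchange roles), $\mathsf{ACA}_0$ proves $(\Sigma^0_3)_m\text{-}\mathsf{Det}\leftrightarrow(\Pi^0_3)_m\text{-}\mathsf{Det}$ uniformly in $m$. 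Hence $\forall n.(\Sigma^0_3)_n\text{-}\mathsf{Det}$ is equivalent to $\forall m.(\Pi^0_3)_m\text{-}\mathsf{Det}$, and I would work with the $\Pi^0_3$ form throughout.

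For the implication from reflection to determinacy, I would argue as follows. By Corollary \ref{cor::formalizing_montalban--shore}(1), $\mathsf{ACA}_0$ proves $\forall m.\bew{\Pi^1_{m+2}\text{-}\mathsf{CA}_0}{(\Pi^0_3)_m\text{-}\mathsf{Det}}$. Since $\mathsf{ACA}_0$ verifies that, uniformly in $m$, $\mathsf{Z}_2$ proves every axiom of the finitely axiomatized theory $\Pi^1_{m+2}\text{-}\mathsf{CA}_0$, this upgrades to $\mathsf{ACA}_0\vdash\forall m.\bew{\mathsf{Z}_2}{(\Pi^0_3)_m\text{-}\mathsf{Det}}$. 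Now $(\Pi^0_3)_m\text{-}\mathsf{Det}$ has the form $\exists\sigma\forall\tau\,\varphi\lor\exists\tau\forall\sigma\,\neg\varphi$ with $\varphi$ arithmetical, hence is $\Sigma^1_2$ and a fortiori (uniformly in $m$, after normal form) a $\Pi^1_3$-sentence. Applying $\Pi^1_3\text{-}\refl(\mathsf{Z}_2)$ to each of these sentences — legitimate even for nonstandard $m$, since the reflection principle quantifies over all internal $\Pi^1_3$-sentences — yields $\forall m.(\Pi^0_3)_m\text{-}\mathsf{Det}$, and thus $\forall n.(\Sigma^0_3)_n\text{-}\mathsf{Det}$.

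For the converse, assume $\forall n.(\Sigma^0_3)_n\text{-}\mathsf{Det}$, equivalently $\forall m.(\Pi^0_3)_m\text{-}\mathsf{Det}$. By Corollary \ref{cor::formalizing_montalban--shore}(2) this gives $\forall m.\beta(\Delta^1_{m+2}\text{-}\mathsf{CA}_0)$. To verify $\Pi^1_3\text{-}\refl(\mathsf{Z}_2)$, fix an internal $\Pi^1_3$-sentence $\varphi$, written in normal form $\forall X\exists Y\,\psi(X,Y)$ with $\psi\in\Pi^1_1$, and suppose $\bew{\mathsf{Z}_2}{\varphi}$. A $\mathsf{Z}_2$-proof uses only finitely many comprehension instances, so there is a (possibly nonstandard) $k$ such that $\varphi$ is provable from $\Pi^1_k$-comprehension together with the single second-order induction axiom. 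Given an arbitrary set $X_0$, I take a coded $\beta$-model $\mathcal{M}$ of $\Delta^1_{k+2}\text{-}\mathsf{CA}_0$ with $X_0\in\mathcal{M}$, supplied by $\beta(\Delta^1_{k+2}\text{-}\mathsf{CA}_0)$. As a coded $\omega$-model, $\mathcal{M}$ satisfies the induction axiom, and since $\Pi^1_k\subseteq\Delta^1_{k+2}$ it satisfies $\Pi^1_k$-comprehension; so $\mathcal{M}$ models the finite fragment of $\mathsf{Z}_2$ used in the proof. By the formalized soundness of the satisfaction relation (Section VII.2 of \cite{simpson2009sosoa}) we get $\mathcal{M}\models\varphi$, hence $\mathcal{M}\models\exists Y\,\psi(X_0,Y)$, so there is $Y_0\in\mathcal{M}$ with $\mathcal{M}\models\psi(X_0,Y_0)$. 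Because $\psi\in\Pi^1_1$ and $\mathcal{M}$ is a coded $\beta$-model, $\psi(X_0,Y_0)$ holds in $\groundmodel$. As $X_0$ was arbitrary, $\forall X\exists Y\,\psi(X,Y)$ holds, i.e.\ $\varphi$ is true.

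The main obstacle is the converse direction, and specifically the interface between the $\Pi^1_3$ complexity of $\varphi$ and the mere $\beta$- (that is, $\beta_1$-) correctness of the witnessing models. The argument works precisely because $\Pi^1_3=\forall X\,\Sigma^1_2=\forall X\exists Y\,\Pi^1_1$: the outer universal quantifier is absorbed by choosing the model to contain $X_0$, the existential witness is found inside the model, and the remaining $\Pi^1_1$ kernel is transferred back to $\groundmodel$ by $\beta$-absoluteness. Care is needed to keep this uniform and to cover nonstandard $\varphi$ and nonstandard $k$; this is exactly why Corollary \ref{cor::formalizing_montalban--shore}(2) is stated with an external $\forall m$ and formalized in $\mathsf{ACA}_0$, so that $\beta(\Delta^1_{k+2}\text{-}\mathsf{CA}_0)$ is available for the internally chosen $k$. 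The remaining points — the uniform normal forms placing $(\Pi^0_3)_m\text{-}\mathsf{Det}$ and $\varphi$ into the required shape, and the formalized soundness lemma — are routine in $\mathsf{ACA}_0$.
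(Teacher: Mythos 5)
Your proof is correct and follows essentially the same route as the paper: both directions rest on Corollary \ref{cor::formalizing_montalban--shore}, with the converse exploiting the $\forall X\exists Y\,\Pi^1_1$ normal form of a $\Pi^1_3$-sentence, a coded $\beta$-model of a sufficiently strong fragment of $\mathsf{Z}_2$ containing the given $X_0$, and $\beta$-absoluteness of the $\Pi^1_1$ kernel. The only cosmetic difference is that the paper argues the converse by contradiction (placing a counterexample to $\varphi$ into the $\beta$-model and deriving $\mathcal{M}\models\varphi$ and $\mathcal{M}\models\neg\varphi$), whereas you argue it directly.
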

\begin{proof}
    First suppose $\Pi^1_3$-$\refl(\mathsf{Z}_2)$.
    Let $m\in\mathbb{N}$.
    By Corollary \ref{cor::formalizing_montalban--shore}.1, $\bew{\Pi^1_{m+2}\text{-}\mathsf{CA}_0}{(\Pi^0_3)_m\text{-}\mathsf{Det}}$, so $\bew{\mathsf{Z}_2}{(\Pi^0_3)_m\text{-}\mathsf{Det}}$.
    By reflection, $(\Pi^0_3)_m\text{-}\mathsf{Det}$ holds.
    As this argument holds for any $m$, $\forall m.(\Pi^0_3)_m\text{-}\mathsf{Det}$ holds.

    Now, assume $\forall m.(\Pi^0_3)_m\text{-}\mathsf{Det}$.
    Suppose $\varphi$ is a $\Pi^1_3$ sentence such that $\bew{{Z}_2}{\varphi}$ and $\varphi$ is false.
    Let $m$ be such that $\bew{\Pi^1_{m}\text{-}\mathsf{CA}_0}{\varphi}$.
    We write $\neg\varphi$ as $\exists X \forall Y \exists Z.\theta$ with $\theta$ arithmetical.
    As $\varphi$ is false, there is $X_0$ such that $\forall Y \exists Z.\theta(X_0)$  holds.
    By Corollary \ref{cor::formalizing_montalban--shore}.2, there is a $\beta$-model $\mathcal{M}$ of $\Pi^1_{m}\text{-}\mathsf{CA}_0$ such that $X\in\mathcal{M}$.
    As $\mathcal{M}$ is a model of $\Pi^1_m$-$\mathsf{CA}_0$ and $\bew{\Pi^1_m\text{-}\mathsf{CA}_0}{\varphi}$, $\mathcal{M}\models \varphi$.
    Also, $\forall Y\in \mathcal{M}\exists Z.\theta(X_0)$ is true, so $\mathcal{M}\models\neg\varphi$.
    This is a contradiction, and so $\varphi$ is true.
    We conclude that $\Pi^1_3$-$\refl(\mathsf{Z}_2)$ holds.
\end{proof}

For any $m\in\omega$, $\Pi^1_3$-$\refl(\mathsf{Z}_m)$ proves $\forall n.(\Pi^0_{m+1})_n\text{-}\mathrm{Det}$, by Theorem 1.1 of Hachtman \cite{hachtman2017calibrating}.
To prove reflection from determinacy, one possible approach is to generalize \cite{montalban2014limits}*{Theorem 1.10} to $m^\text{th}$-order arithmetic.
Along with definitions for subsystems of $\mathsf{Z}_m$, we would also need workable definitions for $\beta$-models of higher-order arithmetic.
\begin{question}
    Let $\mathsf{Z}_m$ be an axiomatization of $m^\text{th}$-order arithmetic.
    \begin{enumerate}
        \item Is $\forall n.(\Pi^0_{m+1})_n\text{-}\mathrm{Det}$ equivalent to $\Pi^1_3$-$\refl(\mathsf{Z}_m)$ for any $m\in \omega$?
        \item Is $\forall m.\forall n.(\Pi^0_{m+1})_n\text{-}\mathrm{Det}$ equivalent to $\Pi^1_3$-$\refl(\mathsf{Z}_\omega)$.
    \end{enumerate}
\end{question}

\begin{bibdiv}
\begin{biblist}
\bibselect{ref_determinacy_and_reflection}
\end{biblist}
\end{bibdiv}

\end{document}